\journal{J. of Graph Theory}
\newtheorem{theorem}{Theorem}
\newtheorem{corollary}[theorem]{Corollary}
\newtheorem{definition}[theorem]{Definition}
\newtheorem{proposition}[theorem]{Proposition}
\newtheorem{lemma}[theorem]{Lemma}
\newtheorem{conjecture}[theorem]{Conjecture}
\newtheorem{claim}{Claim}
\long\def\symbolfootnote[#1]#2{\begingroup%
\def\thefootnote{\fnsymbol{footnote}}\footnote[#1]{#2}\endgroup}
\newcommand{\join}{\bowtie} 
\newcommand{\Sym}{\ominus}
\newcommand{\ID}{\gamma^{\text{\tiny{ID}}}}
\newcommand{\EID}{\gamma^{\text{\tiny{EID}}}}
\tikzstyle{graphnode}=[draw,shape=circle,fill=black,draw=black,minimum size=0.5pt,inner sep=1.5pt]
\tikzstyle{edgecode}=[line width =3 pt]
\tikzstyle{edge}=[thin]
\begin{document}
\begin{frontmatter}
\title{Identifying codes in line graphs\tnoteref{t1}}
\tnotetext[t1]{This research is
    supported by the ANR Project IDEA {\scriptsize $\bullet$} {ANR-08-EMER-007},  2009-2011.}

\author[LaBRI]{Florent Foucaud}
\author[Grenoble]{Sylvain Gravier}
\author[LaBRI,Grenoble]{Reza Naserasr}
\author[Grenoble] {Aline Parreau}
\author[LaBRI]{Petru Valicov}
\address[LaBRI]{LaBRI - Universit\'e de Bordeaux - CNRS,
351 cours de la Lib\'eration,
33405 Talence cedex, France.}
\address[Grenoble]{Institut Fourier 100, rue des Maths, 
BP 74, 38402 St Martin d'H\`eres cedex, France. }

\begin{abstract}
An identifying code of a graph is a subset of its vertices such that every
vertex of the  graph is uniquely identified by the set of its neighbours within 
the code. We study the edge-identifying code problem, i.e. the identifying
code problem in line graphs. If $\ID(G)$ denotes the size of a
minimum identifying code of an identifiable graph $G$, we show that the usual
bound $\ID(G)\ge \lceil\log_2(n+1)\rceil$, where $n$ denotes the order
of $G$, can be improved to $\Theta(\sqrt{n})$ in the class of line
graphs. Moreover, this bound is tight. We also prove that the upper
bound $\ID(\mathcal{L}(G))\leq 2|V(G)|-5$,
 where $\mathcal{L}(G)$ is the line graph of $G$, holds (with two exceptions). This implies
that a conjecture of R.~Klasing, A.~Kosowski, A.~Raspaud and the
first author holds for a subclass of line graphs. Finally, we show
that the edge-identifying code problem is NP-complete, even for the
class of planar bipartite graphs of maximum degree~3 and arbitrarily
large girth.
\end{abstract}

 \begin{keyword} 
Identifying codes, Dominating sets, Line graphs, NP-completeness.
 \end{keyword}

\end{frontmatter}

\section{Introduction}

An \emph{identifying code} of a graph $G$ is a subset $\mathcal C$ of vertices
of $G$ such that for each vertex $x$, the set of vertices in $\mathcal C$ at
distance at most~1 from $x$, is nonempty and uniquely identifies $x$. More formally:

\begin{definition}
Given a graph $G$, a subset $\mathcal C$ of $V(G)$ is an identifying code of $G$ if 
$\mathcal C$ is both:
\begin{itemize}
\item a \emph{dominating set} of $G$, i.e. for each vertex $v\in V(G)$, $N[v]\cap
\mathcal C\neq\emptyset$, and
\item a \emph{separating set} of $G$, i.e. for each pair $u, v \in V(G)$ ($u\neq v$), $N[u]\cap \mathcal C\neq
N[v]\cap \mathcal C$.
\end{itemize}
\end{definition}

Here $N[v]$ is the closed neighbourhood of $v$ in
$G$. This concept was introduced in 1998 in~\cite{KCL98} and is a
well-studied one (see
e.g.~\cite{A10,CHL03,CHL07,FGKNPV10,FKKR10,GM07,M06}).

A vertex $x$ is a \emph{twin} of another vertex $y$ if $N[x]=N[y]$.
A graph $G$ is called \emph{twin-free} if no vertex has a twin.
The first observation regarding the concept of identifying codes is that a graph is
\emph{identifiable}
if and only if it is twin-free. As usual for many other graph theory concepts, a natural problem
in the study of identifying codes is to find one of a minimum size. Given a graph $G$, the smallest
size of an identifying code of $G$ is called \emph{identifying code number} of $G$ and denoted by $\ID(G)$. The main lines of research here
are to find the exact value of $\ID(G)$ for interesting graph classes, to approximate it and to
give lower or upper bounds in terms of simpler graph parameters. Examples of classic results are
as follows:

\begin{theorem}\label{boundn-1}\cite{GM07}
If $G$ is a twin-free graph with at least two edges, then $\ID(G)\leq |V(G)|-1$. 
\end{theorem}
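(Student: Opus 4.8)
The plan is to prove the bound $\ID(G) \le |V(G)|-1$ for any twin-free graph $G$ with at least two edges, by exhibiting an identifying code of that size. The natural approach is to show that the complement of a well-chosen single vertex — that is, $\mathcal{C} = V(G) \setminus \{w\}$ for a suitable $w$ — forms an identifying code. So the first step is to understand, for an arbitrary vertex $w$, what can go wrong when we delete $w$ from a set that is otherwise all of $V(G)$.

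First I would observe that $\mathcal{C} = V(G) \setminus \{w\}$ is automatically a dominating set as long as $w$ has at least one neighbour (since then $w$ is dominated by a neighbour in $\mathcal{C}$, and every other vertex dominates itself); isolated vertices are the only obstruction to domination, and an isolated vertex would anyway be problematic. Next I would examine the separation condition. For a pair $u,v$ with $u \neq v$, the sets $N[u] \cap \mathcal{C}$ and $N[v] \cap \mathcal{C}$ can fail to differ only because we removed $w$, so the analysis reduces to understanding when deleting the single element $w$ collapses two otherwise-distinct closed neighbourhoods. Concretely, if $N[u] \cap \mathcal{C} = N[v] \cap \mathcal{C}$ but $N[u] \neq N[v]$ (the latter holds because $G$ is twin-free), then the symmetric difference $N[u] \Sym N[v]$ must be contained in $\{w\}$, forcing $N[u] \Sym N[v] = \{w\}$.

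The key step is therefore to count or control the ``bad'' vertices $w$: those whose removal destroys separation for some pair. I expect the crucial structural fact to be that each vertex $w$ can be the unique distinguishing element for at most a limited number of pairs, and more importantly that one can always find at least one vertex $w$ whose removal preserves the identifying-code property. A clean way to organize this is to argue that the set of ``forbidden'' choices of $w$ cannot exhaust $V(G)$: if every vertex were forbidden, then for every $w$ there would be a pair $\{u_w, v_w\}$ with $N[u_w] \Sym N[v_w] = \{w\}$, and I would derive a contradiction with twin-freeness or with the hypothesis of having at least two edges, by chasing these symmetric-difference relations around the graph. The hypothesis ``at least two edges'' is presumably exactly what rules out the small sporadic graphs (such as a single edge plus isolated structure, or $K_2$) where no valid $w$ exists.

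The hard part will be the case analysis needed to guarantee the existence of a good vertex $w$: one must show that the pairs $\{u,v\}$ with singleton symmetric difference $\{w\}$ cannot be arranged so as to cover every possible $w$ simultaneously. I would handle this by supposing for contradiction that every vertex is forbidden and analysing the resulting system of equations $N[u_w] \Sym N[v_w] = \{w\}$, looking at how a vertex can appear in many such symmetric differences; the at-least-two-edges assumption, together with twin-freeness, should make the system overdetermined and yield the contradiction, while simultaneously pinpointing why the excluded small graphs are genuine exceptions.
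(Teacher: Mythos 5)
Your overall strategy is the right one, and in fact it is exactly the strategy of the cited source: the paper does not prove this theorem itself but refers to [GM07], whose very title is ``On graphs having a $V\setminus\{x\}$ set as an identifying code.'' Your reduction is correct and cleanly stated: since supersets of identifying codes are identifying codes, the bound $\ID(G)\leq |V(G)|-1$ is equivalent to the existence of a vertex $w$ such that $\mathcal{C}=V(G)\setminus\{w\}$ is an identifying code; domination fails only if $w$ is isolated; and by twin-freeness, separation fails only if some pair satisfies $N[u]\Sym N[v]=\{w\}$. (One can even check, as your computation implicitly shows, that such a pair must have $w\notin\{u,v\}$, so pairs involving $w$ itself are never a problem.) But everything after that reduction is deferred: the existence of a good $w$ is precisely the entire content of the Gravier--Moncel paper, and your proposal replaces it with the hope that the system of relations $N[u_w]\Sym N[v_w]=\{w\}$ is ``overdetermined.'' That is a genuine gap, not a routine verification.

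Moreover, the specific mechanism you gesture at --- that each vertex can be the unique distinguishing element for ``at most a limited number of pairs,'' suggesting a counting contradiction --- cannot work on its own. To make every vertex bad one needs only \emph{one} witnessing pair per vertex, i.e., about $n$ pairs out of $\binom{n}{2}$ available, so no cardinality count rules this out; the contradiction must come from structure. The structure you have not extracted is this: $N[u]\Sym N[v]=\{w\}$ forces $u\sim v$, $w\sim u$, $w\not\sim v$, and $N[v]=N[u]\setminus\{w\}$, so $v$ is an ``almost-twin'' of $u$ with $N[v]\subsetneq N[u]$ and deficiency exactly one. These strict containments orient all witnessing pairs (no directed cycles), the map $w\mapsto(u_w,v_w)$ is injective (the witness pair determines $w$ as the unique element of $N[u_w]\setminus N[v_w]$), and the actual proof proceeds by an extremal/chain argument on these almost-twin pairs, together with twin-freeness, to contradict the assumption that every non-isolated vertex is bad. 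One small correction as well: the role of the edge hypothesis is to exclude edgeless graphs (which are twin-free with $\ID(G)=|V(G)|$, since each vertex is dominated only by itself), not graphs like $K_2$, which is already excluded by twin-freeness --- indeed a twin-free graph with an edge automatically has at least two edges. Until the existence of a good $w$ is actually established along the lines above, the proposal is a correct framing of the problem rather than a proof.
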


The collection of all twin-free graphs reaching this bound is classified in \cite{FGKNPV10}.

A better upper bound in terms of both number of vertices and maximum degree $\Delta(G)$ of a graph $G$
is also conjectured:

\begin{conjecture}\label{ConjFKKR}\cite{FKKR10}
There exists a constant  $c$ such that for every twin-free graph $G$,
$$\ID(G) \leq |V(G)|- \frac{|V(G)|}{\Delta(G)}+c.$$
\end{conjecture}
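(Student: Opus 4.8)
The plan is to pass to the complement of the code. Writing $n=|V(G)|$ and $\Delta=\Delta(G)$, an identifying code of size at most $n-n/\Delta+c$ is the same thing as a \emph{removable} set $S\subseteq V(G)$ with $|S|\ge n/\Delta-c$ for which $\mathcal C=V(G)\setminus S$ is still an identifying code. Unwinding the two defining properties, $\mathcal C$ is dominating exactly when $S$ contains no closed neighbourhood $N[v]$, and $\mathcal C$ is separating exactly when $S$ contains no symmetric difference $N[u]\triangle N[v]$; note that $N[u]\triangle N[v]\neq\emptyset$ for every pair precisely because $G$ is twin-free. Equivalently, $\mathcal C$ must be a transversal of the hypergraph $\mathcal H$ whose edges are all the sets $N[v]$ together with all the sets $N[u]\triangle N[v]$, so that $\ID(G)=\tau(\mathcal H)$ and the goal becomes a lower bound of $n/\Delta-c$ on the largest edge-free subset of $\mathcal H$.

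The first construction I would try is to seed $S$ with an independent set $I$ of $G$ taken as large as possible. This makes domination almost automatic: if $v\in I$ then $N(v)\subseteq\mathcal C$, so every non-isolated vertex is dominated, and the boundedly many isolated vertices are absorbed into the constant $c$. Separation can still fail, but only when the entire difference $N[u]\triangle N[v]$ is swallowed by $S$; the cleanest and most numerous such obstructions are the \emph{false-twin} classes sitting inside $I$ (pairs with $N(u)=N(v)$ and $u\not\sim v$, for which the difference is just $\{u,v\}\subseteq I$), and within each such class one may keep all but one vertex in $\mathcal C$ and still separate everybody. The repair step is therefore to return to the code one vertex of every pair whose difference would otherwise lie inside $S$, the false-twin classes being the main case to account for.

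The core difficulty is quantitative, and this is where I expect the argument to resist a uniform treatment. A maximal independent set only guarantees $|I|\ge n/(\Delta+1)=n/\Delta-\Theta(n/\Delta^{2})$, and the deficit $\Theta(n/\Delta^{2})$ is \emph{not} bounded by a constant, so independence alone cannot reach the conjectured $n/\Delta-c$. The removable set must therefore be allowed to contain adjacent vertices — an edge of $G$ inside $S$ is harmless as long as no full closed neighbourhood and no full difference is swallowed — which forces one to replace the clean independent-set seed by a denser packing and to re-establish domination by hand. At the same time one must show that the repair step returns only $O(1)$ vertices in total, whereas a priori a graph may contain many large false-twin classes and many pairs with small symmetric difference, each eroding the savings. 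Balancing these competing pressures simultaneously — packing density against domination against a constant-size repair — is the real obstacle, and the fact that it can currently be controlled only under extra structural hypotheses (such as the line graphs treated in this paper) is exactly what keeps the general statement a conjecture rather than a theorem.

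For this reason the most promising route to the full statement is probably not the greedy/independent-set route but a weighting or discharging scheme directly on $\mathcal H$: assign each vertex a charge, show that $\tau(\mathcal H)$ is governed by the total charge, and exploit $|N[v]|\le\Delta+1$ to redistribute so that a $1/\Delta$ fraction of the charge can always be discarded up to an additive constant. I would develop that scheme only after the independent-set attempt has pinned down precisely which local configurations are extremal, since those configurations are what the discharging rules must be tuned to defeat.
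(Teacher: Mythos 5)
You were given a statement that the paper does not prove: it is Conjecture~\ref{ConjFKKR}, quoted from~\cite{FKKR10}, and it is open. So there is no internal proof to compare your attempt against, and your text, read honestly, does not claim to close the gap either --- it reformulates the problem, diagnoses obstructions, and defers to an unexecuted discharging scheme. As a proof it therefore has a genuine (indeed total) gap: no construction achieving a removable set of size $|V(G)|/\Delta(G)-c$ is ever produced. That said, the partial content is sound and matches the paper's own framing. Your complement reformulation ($\mathcal C$ is dominating iff $S=V(G)\setminus\mathcal C$ contains no closed neighbourhood $N[v]$, and separating iff $S$ contains no symmetric difference $N[u]\triangle N[v]$, nonemptiness of the latter being exactly twin-freeness) is correct and is the transversal-hypergraph viewpoint of the paper's introduction. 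Your two quantitative warnings are also accurate: a maximal independent set only guarantees $n/(\Delta+1)$, whose deficit $n/(\Delta(\Delta+1))$ against $n/\Delta$ is unbounded in $n$; and false-twin pairs, for which $N[u]\triangle N[v]=\{u,v\}$, force any code to contain all but one vertex of each false-twin class, so the ``repair step'' cannot in general cost only $O(1)$. These are precisely the reasons the independent-set route cannot work as stated, and you correctly identify them rather than papering over them.

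For context, what the paper actually establishes is the conjecture restricted to a subclass of line graphs, by a mechanism different from both of your proposed routes. Theorem~\ref{EIDare2Degenerated} shows that any inclusionwise minimal edge-identifying code of a pendant-free graph induces a $2$-degenerated subgraph, whence $\EID(G)\leq 2|V(G)|-5$ apart from $K_4$ and $K_4^-$ (Corollary~\ref{Bound2|V(G)|-5}); when the average degree of $G$ is at least~$5$, this is at most $|E(G)|-|E(G)|/\Delta(\mathcal L(G))$, which is the conjectured bound for $\mathcal L(G)$ with $c=0$. The leverage there is local and close in spirit to what you hoped discharging would exploit: removing a code edge $uv$ whose endpoints both have code-degree at least~$3$ forces one of a few rigid configurations (Figure~\ref{fig:2degenerated}), and these extremal configurations drive the degeneracy bound. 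Finally, note Proposition~\ref{LineGraphofRegulars}: line graphs of subdivided $k$-regular graphs attain $|V|-|V|/\Delta$ exactly, so even inside the class of line graphs your target has no slack --- any general argument must be essentially lossless, which is a good sanity check on why the packing-versus-repair tension you isolated resists a uniform treatment and why the general statement remains a conjecture.
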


Some support for this conjecture is provided in~\cite{FGKNPV10, FKKR10, FP11}.

The parameter $\ID(G)$ is also bounded below by a function of $|V(G)|$ where equality
holds for infinitely many graphs.

\begin{theorem}\label{boundlogn}\cite{KCL98}
For any twin-free graph $G$, $\ID(G) \geq \lceil \log_2(|V(G)|+1) \rceil$.
\end{theorem}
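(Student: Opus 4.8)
The plan is to give a direct counting (information-theoretic) argument comparing the number of vertices of $G$ with the number of possible ``code signatures'' a vertex can have. Let $\mathcal C$ be a minimum identifying code of $G$, and set $k=\ID(G)=|\mathcal C|$. To each vertex $v\in V(G)$ I would associate its \emph{signature} $s(v):=N[v]\cap\mathcal C$, which is a subset of the $k$-element set $\mathcal C$.

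The two defining properties of an identifying code translate directly into two properties of the map $s$. First, since $\mathcal C$ is a dominating set, every signature $s(v)$ is nonempty. Second, since $\mathcal C$ is a separating set, we have $s(u)\neq s(v)$ whenever $u\neq v$, so $s$ is injective. Hence $s$ embeds $V(G)$ into the family of \emph{nonempty} subsets of a $k$-element set.

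Counting these subsets gives the bound: a $k$-element set has exactly $2^k-1$ nonempty subsets, so injectivity of $s$ yields $|V(G)|\leq 2^k-1$. Rearranging gives $|V(G)|+1\leq 2^k$, hence $k\geq\log_2(|V(G)|+1)$, and since $k$ is an integer we may round up to obtain $\ID(G)=k\geq\lceil\log_2(|V(G)|+1)\rceil$, as desired.

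Honestly, there is no serious obstacle here; the whole content is the observation that domination forces signatures to be nonempty while separation forces them to be distinct, so that $V(G)$ injects into the $2^k-1$ nonempty subsets of $\mathcal C$. The only point requiring a token of care is the integrality step at the end, where the ceiling appears precisely because $\ID(G)$ is an integer. It is worth noting that equality $|V(G)|=2^k-1$ can only hold when \emph{every} nonempty subset of $\mathcal C$ occurs as a signature, which is the structural reason the bound is attained by infinitely many graphs.
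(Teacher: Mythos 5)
Your proof is correct and is precisely the standard counting argument behind this bound: the paper itself cites Theorem~\ref{boundlogn} from~\cite{KCL98} without reproducing a proof, and the original argument there is exactly your observation that the signatures $N[v]\cap\mathcal C$ are distinct nonempty subsets of a $k$-set, giving $|V(G)|\leq 2^k-1$. Your closing remark on when equality can occur also matches the construction the paper sketches at the start of Section~\ref{LowerBound}, so there is nothing to add.
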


The collection of all graphs attaining this lower bound is classified in~\cite{M06}.

From a computational point of view, it is shown that given a graph
$G$, finding the exact value of $\ID(G)$ is in the class of NP-hard
problems. It in fact remains NP-hard for many subclasses of graphs
\cite{A10,CHL03}.  Furthermore, approximating $\ID(G)$ is not easy
either as shown in \cite{BLT06,GKM08,S07}: it is NP-hard to
approximate $\ID(G)$ within a $o(\log(|V(G)|))$-factor.

The problem of finding identifying codes in graphs can be viewed as a special case 
of the more general combinatorial problem of finding \emph{transversals} in hypergraphs
(a transversal is a set of vertices intersecting each hyperedge).
More precisely, to each graph $G$ one can associate the hypergraph $\mathcal H (G)$
whose vertices are vertices of $G$ and whose hyperedges are all the sets of the form $N[v]$
and $N[u]\Sym  N[v]$ (symmetric difference of $N[u]$ and $N[v]$).
Finding an identifying code for $G$ is then equivalent to finding a
transversal for $\mathcal H (G)$. Though the identifying code problem is captured by this
more general problem, the structural properties of the graph from which the hypergraph is
built allow one to obtain stronger results which are not true for general hypergraphs. In this
work, we show that even stronger results can be obtained if we consider hypergraphs coming
from line graphs. These stronger results follow from the new perspective of identifying edges
by edges.

Given a graph $G$ and an edge $e$ of $G$, we define $N[e]$ to be the set of edges adjacent
to $e$ together with $e$ itself. An \emph{edge-identifying code} of a graph $G$ is a subset $\mathcal{C}_E$
of edges such that for each edge $e$ the set $N[e] \cap \mathcal{C}_E$ is nonempty and uniquely
determines $e$. More formally:

\begin{definition}
Given a graph $G$, a subset $\mathcal{C}_E$ of $E(G)$ is an edge-identifying code of $G$ if 
$\mathcal{C}_E$ is both:
\begin{itemize}
\item an \emph{edge-dominating set} of $G$, i.e. for each edge $e\in E(G)$, $N[e]\cap
\mathcal{C}_E\neq\emptyset$, and
\item an \emph{edge-separating set} of $G$, i.e. for each pair $e, f \in E(G)$ ($e\neq f$), $N[e]\cap \mathcal{C}_E\neq
N[f]\cap \mathcal{C}_E$.
\end{itemize}
\end{definition}

We will say that an edge $e$ separates edges $f$ and $g$ if either $e$
belongs to $N[f]$ but not to $N[g]$, or vice-versa.  When considering
edge-identifying codes we will assume the edge set of the graph is
nonempty. The line graph $\mathcal L(G)$ of a graph $G$ is the graph
with vertex set $E(G)$, where two vertices of $\mathcal L(G)$ are
adjacent if the corresponding edges are adjacent in $G$. It is easily
observed that the notion of edge-identifying code of $G$ is equivalent
to the notion of (vertex-)identifying code of the \emph{line graph} of
$G$. Thus a graph $G$ admits an edge-identifying code if and only if $
\mathcal L(G)$ is twin-free.  A pair of twins in $\mathcal L(G)$ can
correspond in $G$ to a pair of: 1. parallel edges; 2. adjacent edges
whose non-common ends are of degree~1; 3. adjacent edges whose non
common ends are of degree~2 but they are connected to each
other. Hence we will consider simple graphs only. A pair of edges of type~2 or type~3
is called \emph{pendant} (see Figure~\ref{fig:pendant}) and thus a graph is \emph{edge-identifiable} if
and only if it is \emph{pendant-free}.  The smallest size of an
edge-identifying code of an edge-identifiable graph $G$ is denoted by
$\EID(G)$ and is called \emph{edge-identifying code number} of $G$.

\begin{figure}[ht!]
\centering
\subfigure{\scalebox{1.0}{\begin{tikzpicture}[join=bevel,inner sep=0.5mm, scale=0.8]
\node at (0,0) [draw,shape=circle,fill] (a) {};
\node at (-1,2) [draw,shape=circle,fill] (b) {};
\node at (1,2) [draw,shape=circle,fill] (c) {};
\node at (0,-1) {$G$};
\draw[line width=2pt] (a) -- (b)
      (a) -- (c);

\draw (b) -- (c);
\draw (0,-1) ellipse (1.5cm and 1cm);

\end{tikzpicture}}}\qquad
\subfigure{\scalebox{1.0}{\begin{tikzpicture}[join=bevel,inner sep=0.5mm, scale=0.8]
\path (0,0) node[draw,shape=circle,fill] (a) {};
\path (-1,2) node[draw,shape=circle,fill] (b) {};
\path (1,2) node[draw,shape=circle,fill] (c) {};
\path (0,-1) node {$G$};
\draw[line width=2pt] (a) -- (b)
      (a) -- (c);
\draw (0,-1) ellipse (1.5cm and 1cm);
\end{tikzpicture}}}
\label{fig:pendant}
\caption{Two possibilities for a pair of pendant edges (thick edges) in $G$}
\end{figure}
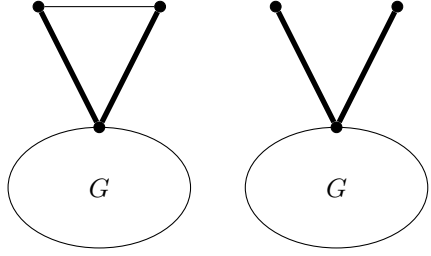

As we will use it often throughout the paper, given a graph $G$ and a
set $\mathcal{S}_E$ of its edges, we define the graph induced by $\mathcal{S}_E$
to be the graph with  the set of all endpoints of the edges
of $\mathcal{S}_E$ as its vertex set and $\mathcal{S}_E$ as its edge set.

To warm up, we notice that five edges of a perfect matching of the Petersen graph $P$,
form an edge-identifying code of this graph (see Figure~\ref{fig:petersen}). The lower
bound of Theorem~\ref{boundlogn} proves that $\EID(P)\geq 4$. Later, by improving this bound
for line graphs, we will see that in fact $\EID(P)=5$ (see Theorem~\ref{prop:low2}
and Theorem~\ref{Thm:Low1}).

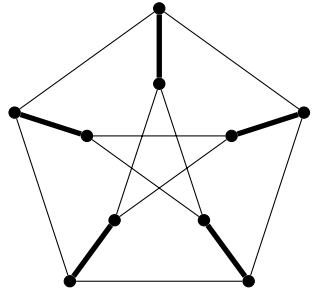
\begin{figure}[!ht]

\begin{center}
\begin{tikzpicture}[join=bevel,inner sep=0.5mm]
\node[graphnode](45) at (18:1) {};
\node[graphnode](15) at (90:1) {};
\node[graphnode](12) at (162:1) {};
\node[graphnode](23) at (234:1) {};
\node[graphnode](34) at (306:1) {};
\node[graphnode](13) at (18:2) {};
\node[graphnode](24) at (90:2) {};
\node[graphnode](35) at (162:2) {};
\node[graphnode](14) at (234:2) {};
\node[graphnode](25) at (306:2) {};

\draw[-] (12)--(34)--(15)--(23)--(45)--(12);
\draw[-] (13)--(24)--(35)--(14)--(25)--(13);
\draw[line width=2pt] (12)--(35);
\draw[line width=2pt] (15)--(24);
\draw[line width=2pt] (34)--(25);
\draw[line width=2pt] (23)--(14);
\draw[line width=2pt] (45)--(13);
\end{tikzpicture}
\end{center}
\caption{\label{fig:petersen} An edge-identifying code of the Petersen graph}
\end{figure}

The outline of the paper is as follows: in
Section~\ref{Preliminaries}, we introduce some useful lemmas and give
the edge-identifying code number of some basic families of graphs.  In
Section~\ref{LowerBound}, we improve the general lower bound for the
class of line graphs, then in Section~\ref{UpperBound} we improve the
upper bound. Finally, in Section~\ref{Complexity} we show that
determining $\EID(G)$ is also in the class of NP-hard problems even
when restricted to planar subcubic bipartite graphs of arbitrarily
large girth, but the problem is 4-approximable in polynomial time.

\section{Preliminaries}\label{Preliminaries}

In this section we first give some easy tools which help for finding
minimum-size edge-identifying codes of graphs.  We then apply these
tools to determine the exact values of $\EID$ for some basic families
of graphs.  We recall that $C_n$ is the cycle on $n$ vertices, $P_n$
is the path on $n$ vertices, $K_n$ is the complete graph on $n$
vertices and $K_{n,m}$ is the complete bipartite graph with parts of
size $n$ and $m$. We recall that the \emph{girth} of a graph is the
length of one of its shortest cycles. An \emph{edge cover} of a graph
$G$ is a subset $\mathcal{S}_E$ of its edges such that the union of
the endpoints of $\mathcal{S}_E$ equals $V(G)$.  A \emph{matching} is
a set of pairwise non-adjacent edges, and a \emph{perfect matching} is
a matching which is also an edge cover.

\begin{lemma}\label{lem:girth5}
Let $G$ be a simple graph with girth at least $5$. Let $\mathcal{C}_E$
be an edge cover of $G$ such that the graph $(V(G), \mathcal{C}_E)$ is
pendant-free. Then $\mathcal{C}_E$ is an edge-identifying code of
$G$. In particular, if $G$ has a perfect matching $M$, $M$ is an
edge-identifying code of $G$.
\end{lemma}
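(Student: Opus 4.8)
The plan is to verify the two defining properties of an edge-identifying code directly for $\mathcal{C}_E$, namely that it is edge-dominating and edge-separating, and to use the girth hypothesis to eliminate all problematic configurations. Since $\mathcal{C}_E$ is an edge cover, every edge $e=uv$ of $G$ has both endpoints covered, so some edge of $\mathcal{C}_E$ is incident to $u$ (or $v$) and hence lies in $N[e]$; this gives edge-domination immediately. The real work is in the separation property, and this is where I expect the main obstacle to lie.

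For separation, I would take two distinct edges $e$ and $f$ of $G$ and show that $N[e]\cap\mathcal{C}_E\neq N[f]\cap\mathcal{C}_E$. First I would dispose of the case where $e$ and $f$ are non-adjacent: then some endpoint of $e$ is not an endpoint of $f$, and since that endpoint is covered by $\mathcal{C}_E$, there is a code edge in $N[e]$ that cannot reach $f$ (here I would need to check, using the girth-$5$ condition, that such a code edge is genuinely outside $N[f]$, since a long girth prevents short cycles from making a code edge simultaneously adjacent to both $e$ and $f$). The more delicate case is when $e$ and $f$ are adjacent, sharing a vertex $w$, with other endpoints $u$ and $v$ respectively. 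Here I would use that $u$ and $v$ are each covered: a code edge incident to $u$ (other than $e$ itself) lies in $N[e]\setminus N[f]$ unless it also reaches $f$, which would force a triangle or short cycle through $u$, $v$, $w$. The girth-at-least-$5$ assumption is precisely what rules these short cycles out, forcing any such code edge to separate $e$ from $f$.

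The heart of the argument, and the step I expect to be trickiest, is translating ``pendant-free'' and ``girth at least $5$'' into the statement that whenever two edges fail to be separated by the covering edges at their private endpoints, a forbidden short cycle (a triangle or a $C_4$) or a pendant configuration must appear. I would organize this as a short case analysis on the relative position of $e$ and $f$ (adjacent versus not, and whether their private endpoints coincide with code-edge endpoints), and in each case exhibit an explicit separating code edge, deriving a contradiction with the girth bound if none existed. The pendant-free hypothesis on $(V(G),\mathcal{C}_E)$ is what guarantees that the induced code graph itself has no type-2 or type-3 twin pair, so that within $\mathcal{C}_E$ no two code edges are confused; combined with the covering property this closes the separation check.

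Finally, for the ``in particular'' clause, I would observe that a perfect matching $M$ is by definition both a matching and an edge cover, so it is certainly an edge cover. It remains only to note that the graph $(V(G),M)$, being a disjoint union of single edges (a matching), contains no pair of adjacent edges at all, hence trivially contains no pendant pair and is pendant-free. The general statement then applies directly to give that $M$ is an edge-identifying code.
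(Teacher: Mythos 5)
Your proposal is correct and follows essentially the same route as the paper's proof: edge-domination directly from the cover property; pendant-freeness of $(V(G),\mathcal{C}_E)$ to separate pairs of code edges; a code edge at the private endpoint of a non-code edge, with triangle-freeness, for mixed pairs; and the exclusion of $C_3$ and $C_4$ to show the code edges covering the two endpoints of an uncovered edge cannot both be adjacent to the other edge. Your handling of the ``in particular'' clause (a perfect matching is an edge cover whose induced graph, having no adjacent edges, is trivially pendant-free) is also exactly the intended one.
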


\begin{proof}
The code $\mathcal{C}_E$ is an edge-dominating set of $G$ because it
covers all the vertices of $G$. To complete the proof, we need to prove
that $\mathcal{C}_E$ is also an edge-separating set.  Let $e_1$, $e_2$
be two edges of $G$. If $e_1,e_2 \in \mathcal{C}_E$, then
$\mathcal{C}_E\cap N[e_1]\neq \mathcal{C}_E\cap N[e_2]$ because
$(V(G), \mathcal{C}_E)$ is pendant-free.  Otherwise, we can assume
that $e_2\notin \mathcal{C}_E$.  If $e_1\in \mathcal{C}_E$ and
$\mathcal{C}_E\cap N[e_1]=\mathcal{C}_E\cap N[e_2]$, then $e_2$ must
be adjacent to $e_1$.  Let $u$ be their common vertex and
$e_2=uv$. Since $\mathcal{C}_E$ is an edge cover, there is an edge
$e_3 \in \mathcal{C}_E$ which is incident to $v$. However, $e_3$
cannot be adjacent to $e_1$ because $G$ is triangle-free. Therefore
$e_3$ separates $e_1$ and $e_2$.  Finally, we assume neither of $e_1$
and $e_2$ is in $\mathcal{C}_E$. Then there are two edges of
$\mathcal{C}_E$, say $e_3$ and $e_4$, adjacent to the two ends of
$e_1$. But since $G$ has neither $C_3$ nor $C_4$ as a subgraph, $e_3$
and $e_4$ cannot both be adjacent to $e_2$ and, therefore, $e_1$ and
$e_2$ are separated.
\end{proof}

We note that in the previous proof the absence of $C_4$ is only used when the endpoints of
$e_1, e_2, e_3, e_4$ could induce a $C_4$ which would not be adjacent to any other edge of $\mathcal{C}_E$.
Thus, we have the following stronger statement: 

\begin{lemma}\label{lem:girth4}
Let $G$ be a triangle-free graph. Let $\mathcal{C}_E$ be a subset of
edges of $G$ that covers vertices of $G$, such that $\mathcal{C}_E$ is
pendant-free.  If for no pair $xy$, $uv$ of isolated edges in
$\mathcal{C}_E$, the set $\{x,y,u,v\}$ induces a $C_4$ in $G$, then
$\mathcal{C}_E$ is an edge-identifying code of $G$.
\end{lemma}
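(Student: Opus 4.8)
The plan is to follow the structure of the proof of Lemma~\ref{lem:girth5} very closely, since Lemma~\ref{lem:girth4} is explicitly advertised as a refinement of it: the claim in the preceding remark is that the hypothesis ``girth at least $5$'' was stronger than necessary, and that the only place the absence of a $C_4$ was genuinely used can be isolated and replaced by the weaker combinatorial condition on pairs of isolated edges. So I would first re-examine the three cases of the previous proof and check exactly which of them invoke the girth-$5$ assumption (equivalently, the absence of $C_3$ and $C_4$), keeping in mind that triangle-freeness is still assumed here.

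First I would dispose of domination: since $\mathcal{C}_E$ covers every vertex of $G$, every edge $e$ has an endpoint incident to some code edge, so $N[e]\cap\mathcal{C}_E\neq\emptyset$, giving the edge-dominating property exactly as before. Then I would handle separation by the same case split on how many of the two edges $e_1,e_2$ lie in $\mathcal{C}_E$. The case $e_1,e_2\in\mathcal{C}_E$ uses only pendant-freeness of $(V(G),\mathcal{C}_E)$, which we still assume, so it goes through verbatim. The case where exactly one of them, say $e_1$, is in $\mathcal{C}_E$ and they are adjacent uses only triangle-freeness (to argue that the code edge $e_3$ covering the far endpoint of $e_2$ cannot be adjacent to $e_1$), and triangle-freeness is part of the hypothesis here, so this case also carries over unchanged.

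The remaining case --- neither $e_1$ nor $e_2$ in $\mathcal{C}_E$ --- is where the $C_4$ exclusion was used, and this is where the work lies. Here one picks code edges $e_3,e_4$ incident to the two endpoints of $e_1$, and one wants to argue that they cannot both also be adjacent to $e_2$. In general $e_3,e_4$ could be incident to both $e_1$ and $e_2$ precisely when the four relevant endpoints form a $C_4$. The key observation I would make is that $e_3$ and $e_4$ can be chosen (or happen) to be \emph{isolated} edges of $\mathcal{C}_E$, so that the only obstruction to separation is exactly a $C_4$ induced by two isolated code edges --- which is forbidden by hypothesis. The main obstacle is therefore to argue carefully that if $e_3$ (or $e_4$) is \emph{not} an isolated edge of $\mathcal{C}_E$, then it already provides separation of $e_1$ and $e_2$ by itself, or forces an adjacent code edge that does: because if $e_3$ has a neighbouring code edge, that neighbour gives an asymmetry between $N[e_1]$ and $N[e_2]$ that a triangle would be needed to cancel. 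I would spell this out by showing that whenever the potential $C_4$ on $\{x,y,u,v\}$ arises with $e_3$ or $e_4$ non-isolated, the extra incident code edge separates $e_1$ from $e_2$, so the hypothesis only needs to rule out the isolated-isolated configuration. Concluding, in all cases $e_1$ and $e_2$ are separated, so $\mathcal{C}_E$ is an edge-identifying code, as required.
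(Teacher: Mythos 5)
Your proposal is correct and follows essentially the same route as the paper, which proves Lemma~\ref{lem:girth4} precisely by re-examining the proof of Lemma~\ref{lem:girth5} and noting that the absence of $C_4$ is only needed in the case where neither edge is in the code and the four endpoints of $e_1,e_2,e_3,e_4$ would induce a $C_4$ not adjacent to any other code edge, i.e.\ with $e_3,e_4$ isolated in $\mathcal{C}_E$. Your key step --- that if $e_3$ or $e_4$ has a code neighbour, that neighbour lies in exactly one of $N[e_1]$, $N[e_2]$ unless $G$ contains a triangle, so only the isolated--isolated induced-$C_4$ configuration can defeat separation --- is exactly the observation the paper's remark relies on.
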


We will also need the following lemma about pendant-free trees.

\begin{lemma}\label{degree2inPendantFree}
If $T$ is a pendant-free tree on more than two vertices, then $T$ has two vertices of degree~1,
each adjacent to a vertex of degree~2.
\end{lemma}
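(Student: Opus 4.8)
The plan is to exploit the extremal structure of a longest path in the tree, together with the precise meaning of ``pendant-free'' in the tree setting. First I would translate the hypothesis: since a tree contains no triangle, a type-3 pendant pair (which requires the extra edge joining the two degree-$2$ endpoints) cannot occur, so the only obstruction we must avoid is a type-2 pendant pair. Concretely, $T$ being pendant-free simply means that no vertex of $T$ has two neighbours of degree~$1$; equivalently, every vertex is adjacent to at most one leaf.

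Next I would fix a longest path $P = v_0 v_1 \cdots v_k$ in $T$. A standard fact is that the two endpoints $v_0$ and $v_k$ have degree~$1$, since any further edge at an endpoint would either create a cycle (impossible in a tree) or yield a strictly longer path. Because $T$ has more than two vertices and is connected, its diameter is at least~$2$, so $P$ has length at least~$2$ and the internal vertices $v_1$ and $v_{k-1}$ exist.

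The heart of the argument is to show $\deg(v_1) = 2$, and symmetrically $\deg(v_{k-1}) = 2$. Suppose $v_1$ had a neighbour $x \notin \{v_0, v_2\}$. Since $T$ is acyclic, $x$ does not lie on $P$; and if $x$ had a neighbour $y \neq v_1$, then $y, x, v_1, v_2, \dots, v_k$ would be a path strictly longer than $P$, a contradiction. Hence $x$ is a leaf. But then $v_0$ and $x$ would be two degree-$1$ neighbours of $v_1$, contradicting pendant-freeness. Therefore $v_1$ has no neighbour besides $v_0$ and $v_2$, i.e. $\deg(v_1) = 2$, and the mirror argument at the other end gives $\deg(v_{k-1}) = 2$.

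Finally I would assemble the conclusion: $v_0$ is a degree-$1$ vertex adjacent to the degree-$2$ vertex $v_1$, and $v_k$ is a degree-$1$ vertex adjacent to the degree-$2$ vertex $v_{k-1}$, with $v_0 \neq v_k$. The one point requiring a moment of care — the main, though minor, obstacle — is the degenerate case $k = 2$, where $v_1 = v_{k-1}$; there $v_1$ would carry the two leaf neighbours $v_0, v_2$, which is exactly a type-2 pendant pair. Thus this case is ruled out by the hypothesis (forcing $k \geq 3$ and two genuinely distinct degree-$2$ vertices), and even were it allowed the statement only asks for two degree-$1$ vertices each adjacent to \emph{some} degree-$2$ vertex. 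This confirms the two required leaves and completes the plan.
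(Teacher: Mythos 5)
Your proof is correct and follows exactly the paper's approach: the paper's own one-line proof says to take a longest path and verify that both of its ends satisfy the condition, which is precisely what you do, filling in the details (endpoints are leaves, their neighbours on the path have degree~2 by pendant-freeness, and the case $k=2$ is excluded). Your preliminary observation that in a tree only type-2 pendant pairs can occur is a nice explicit touch the paper leaves implicit.
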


\begin{proof}
Take a longest path in $T$, then it is easy to verify that both ends of this path satisfy
the condition of the lemma.
\end{proof}

We are now ready to determine the value of $\EID$ of some families  of graphs.

\begin{proposition} 
We have 
$ \EID(K_n)=  
  \begin{cases} 
  5,  & \text{ if } n=4 \text{ or } 5\\
  n-1,  & \text{ if } n\geq 6 \\
    \end{cases}$. 
Furthermore, let $\mathcal{C}_E$ be an edge-identifying code of $K_n$ of size
$n-1$ ($n\geq 6$) and let $G_1, G_2, \ldots, G_k$ be the connected
components of $(V(K_n), \mathcal{C}_E)$.  Then exactly one component, say $G_i$,
is isomorphic to $K_1$ and every other component $G_j$ ($j\neq i$) is
isomorphic to a cycle of length at least~5.
\end{proposition}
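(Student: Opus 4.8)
The plan is to translate the code condition into the structure of the subgraph $H=(V(K_n),\mathcal C_E)$ induced by the code, and then argue by counting. For a vertex $x$ write $A(x)$ for the set of code edges incident with $x$; then an edge $uv$ of $K_n$ has code-neighbourhood $N[uv]\cap\mathcal C_E=A(u)\cup A(v)$, so identifying means this map is injective and nowhere empty. I would first record three necessary conditions. \emph{Domination} forces at most one \emph{uncovered} vertex (one with $A(x)=\emptyset$), since two such vertices would span an undominated edge. \emph{Separation restricted to code edges} says precisely that no two edges of $H$ share a closed neighbourhood, i.e. that $H$ is pendant-free. Finally I would isolate two local collisions: if $x$ is a leaf with neighbour $y$ of degree $2$ and second neighbour $w$, then $A(x)=\{xy\}\subseteq A(y)$ and $yw\in A(w)$ give $A(x)\cup A(w)=A(y)\cup A(w)=\{xy\}\cup A(w)$, so the distinct edges $xw$ and $yw$ are not separated; and if an uncovered vertex $\infty$ exists, then for \emph{any} leaf $x$ with neighbour $y$ the edges $xy$ and $y\infty$ both have code-neighbourhood $A(y)$, so they collide.

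For the \textbf{lower bound} $\EID(K_n)\ge n-1$ I would show that every component of $H$ except at most one isolated vertex contains a cycle. A tree component on at least three vertices is pendant-free, so by Lemma~\ref{degree2inPendantFree} it has a leaf adjacent to a degree-$2$ vertex, which the first collision forbids; a $K_2$-component $xy$ has $A(x)=A(y)$, so $xz$ and $yz$ collide for any third vertex $z$. Hence every non-trivial component $G_i$ satisfies $|E(G_i)|\ge|V(G_i)|$, and summing over all components (the at most one $K_1$ contributing no edges) yields $|\mathcal C_E|\ge n-1$.

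For the \textbf{characterisation at equality} I would observe that $|\mathcal C_E|=n-1$ forces equality throughout: there is exactly one $K_1$, and every other component is connected with $|E|=|V|$, i.e. unicyclic. The isolated vertex now lets me apply the second collision, which rules out \emph{all} leaves; since the covered vertices carry total degree $2(n-1)$ and each has degree at least $2$, they are $2$-regular, so each non-trivial component is a single cycle. Here $C_3$ is a pendant (type~3) and is already excluded, while a $C_4$-component $1234$ is killed because its two diagonals $13$ and $24$ of $K_n$ both have code-neighbourhood equal to all four cycle edges. Thus every non-trivial component is a cycle of length at least $5$. For the matching \textbf{upper bound} when $n\ge 6$ I would take a single cycle $C_{n-1}$ plus one isolated vertex and verify it identifies directly: edges at the isolated vertex get $2$-element neighbourhoods determined by their cycle endpoint, cycle edges get $3$-element neighbourhoods whose middle edge recovers the pair, and chords get $4$-element neighbourhoods (two disjoint cherries) whose centres recover the pair; the condition $n-1\ge 5$ is exactly what makes this last case injective.

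The two small cases $n=4,5$, where the value $5$ exceeds $n-1$, need separate treatment and are where I expect the most friction. The upper bounds come from explicit codes ($K_4$ minus an edge, and the Hamiltonian $C_5$ in $K_5$, the latter valid because $5$-cycles identify), with Theorem~\ref{boundn-1} also capping $\EID(K_4)$ at $5$. For the lower bounds, $\EID(K_5)\ge 5$ follows from the equality analysis, since a size-$4$ code would need its covered vertices to form cycles of length at least $5$ on only $4$ vertices, which is impossible; and $\EID(K_4)\ge 5$ I would settle by a short finite check, as the only candidate code-graphs on $3$ or $4$ edges are $P_4$, $K_{1,3}$, $C_3$, $C_4$ and the paw, each of which is either not pendant-free or fails one of the two collisions. \textbf{The main obstacle} is precisely this extremal analysis together with the small cases: the global counting is easy, but proving that the tight components are exactly cycles, excluding $C_3$ and $C_4$, and disposing of $K_4$ by hand is where the real care is required.
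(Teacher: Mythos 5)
Your proof is correct and follows essentially the same route as the paper: the same collision arguments (a leaf adjacent to a degree-2 vertex via Lemma~\ref{degree2inPendantFree}, $K_2$-components colliding through any third vertex, and a leaf together with the uncovered vertex), the same count forcing exactly one $K_1$ plus unicyclic components at equality, and the same exclusion of $C_3$ (pendant) and $C_4$ (colliding diagonals). The only departures are cosmetic: you obtain $\EID(K_5)\geq 5$ by reusing the equality analysis (cycles of length at least~5 cannot fit on four covered vertices) instead of the paper's direct enumeration of $P_5$ and $C_4$, and you settle $K_4$ by a finite check of candidate code subgraphs rather than via the observation $\mathcal{L}(K_4)\cong K_6\setminus M$.
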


\begin{proof}
We note that $\mathcal L(K_4)$ is isomorphic to $K_6\setminus M$, where $M$ is a perfect matching of $K_6$. One can check that this graph has identifying code number~$5$.
By a case analysis, we can show that $K_5$ does not admit an edge-identifying code of
size~4. Indeed, since an edge-identifying code must be pendant-free, there are only two graphs possible for an edge-identifying code of this size: a path $P_5$ or a cycle $C_4$. In both cases, there are edges which are not separated. 
Edges of a $C_5$ form an edge-identifying code of size~5 of $K_5$, hence $\EID(K_5)=5$. Furthermore,
it is not difficult to check that the set of edges of a cycle of length $n-1$ ($n\geq 6$) identifies
all edges of $K_n$. Thus we have $\EID(K_n) \leq n-1$. The fact that $\EID(K_n) \geq n-1$
follows from the second part of the theorem which is proved as follows.

Let $\mathcal{C}_E$ be an edge-identifying code of $K_n$ of size $n-1$ or less ($n\geq 6$).
Let $G'=(V(K_n), \mathcal{C}_E)$. Let $G_1, G_2, \ldots, G_k$ be the connected components
of $G'$. Since $G'$ has $n$ vertices but at most $n-1$ edges, at least one component of $G'$
is a tree. On the other hand we claim that at most one of these components can be a tree and that such
tree would be isomorphic to $K_1$. Let $G_i$ be a tree. First we show that $|V(G_i)|\leq 2$.
If not, by Lemma~\ref{degree2inPendantFree} there is a vertex $x$ of degree~1
in $G_i$ with a neighbour $u$ of degree~2. Let $v$ be the other neighbour of $u$.
Then the edges $xv$ and $uv$ are not identified. If $V(G_i)=\{ x, y \}$ then for any
other vertex $u$, the edges $ux$ and $uy$ are not separated. Finally, if there are
$G_i$ and $G_j$ with $V(G_i)=\{ x\}$ and $V(G_j)=\{ y\}$,  then the edge $xy$ is
not dominated by $\mathcal{C}_E$. Thus exactly one component of $G'$, say $G_1$, is a tree and
$G_1\cong K_1$. This implies that $\EID(K_n)\geq n-1$. Therefore, $\EID(K_n)=n-1$
and, furthermore, each $G_i$, $(i \geq 2)$, is a graph with a unique cycle.

It remains to prove that each $G_i$, $i\geq 2$ is isomorphic to a cycle of length
at least $5$. By contradiction suppose one of these graphs, say $G_2$,
is not isomorphic to a cycle. Since $G_2$ has a unique cycle, it must contain a vertex $v$ of degree~1. Let $t$
be the neighbour of $v$ in $G_2$ and let $u$ be the vertex of $G_1$. Then the edges
$tv$ and $tu$ are not separated by $\mathcal{C}_E$. Finally we note that such cycle cannot be of
length~3 or~4, because $C_3$ is not pendant-free and in $C_4$, the two chords (which
are edges of $K_n$) would not be separated.
\end{proof}

\begin{proposition}
  $\EID(K_{n,n})=\left\lceil \frac{3n-1}{2}\right\rceil$  for $n\geq 3$.
\end{proposition}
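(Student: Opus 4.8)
The plan is to view an edge-identifying code $\mathcal{C}$ of $K_{n,n}$ (with parts $A=\{a_1,\dots,a_n\}$ and $B=\{b_1,\dots,b_n\}$) as a set of cells in an $n\times n$ array, identifying the edge $a_ib_j$ with the cell $(i,j)$. For a cell $(i,j)$ the set $N[a_ib_j]\cap\mathcal{C}$ is exactly the set of code cells lying in row $i$ together with those lying in column $j$; I will call this its \emph{signature}. Writing $R_i$ (resp. $C_j$) for the set of columns (resp. rows) occupied by code cells of row $i$ (resp. column $j$), and $r_i=|R_i|$, $c_j=|C_j|$, a short computation translates separation into three conditions: two cells $(i,j),(i,l)$ in a common row are \emph{not} separated iff $C_j\subseteq\{i\}$ and $C_l\subseteq\{i\}$; symmetrically for a common column; and two cells $(i,j),(k,l)$ with $i\neq k$, $j\neq l$ are \emph{not} separated iff $R_i\subseteq\{l\}$, $R_k\subseteq\{j\}$, $C_j\subseteq\{k\}$ and $C_l\subseteq\{i\}$. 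Domination says merely that no edge has both endpoints uncovered, i.e. that every row is nonempty or every column is nonempty.

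For the lower bound I would fix a minimum code of size $m$ and, using the side-symmetry of $K_{n,n}$, assume every row is nonempty. The same-row condition forbids two empty columns, so there is at most one. Calling a line \emph{unit} if it carries exactly one code cell, the same-column condition forces the unit rows to occupy pairwise distinct columns. The crucial step uses the cross condition: if two unit rows had their single cells in columns that are themselves unit columns, then the two ``opposite'' cells would fail to be separated; hence among the columns occupied by unit rows at most one is a unit column. I then split into two cases. If some column is empty, the previous remark shows there are no unit columns at all, so $n-1$ columns are rich and $m\ge 2(n-1)$, which is at least $\lceil(3n-1)/2\rceil$ for $n\ge 3$. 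Otherwise every column is nonempty; writing $u,u^{*}$ for the numbers of unit rows and unit columns, counting cells by rows and by columns gives $m\ge 2n-u$ and $m\ge 2n-u^{*}$, while the crucial step yields $u+u^{*}\le n+1$. Adding the two bounds gives $2m\ge 4n-(u+u^{*})\ge 3n-1$, hence $m\ge\lceil(3n-1)/2\rceil$.

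For the upper bound I would exhibit an explicit construction and certify it with Lemma~\ref{lem:girth4} (recall $K_{n,n}$ is triangle-free). A path $P_4$ inside $K_{n,n}$ is pendant-free, contains no isolated edge, and covers two vertices on each side with three edges. Tiling $A\cup B$ by $\lfloor n/2\rfloor$ vertex-disjoint copies of $P_4$ therefore yields a pendant-free edge cover; when $n$ is odd, the one leftover pair of vertices is joined by a single extra edge, which is then the unique isolated edge of the configuration. Now the endpoints of any two isolated edges of $K_{n,n}$ would induce a $C_4$, but here there is at most one isolated edge, so the hypotheses of Lemma~\ref{lem:girth4} are met and the configuration is an edge-identifying code. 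Its size is $3n/2$ when $n$ is even and $\frac{3(n-1)}{2}+1=\frac{3n-1}{2}$ when $n$ is odd, i.e. exactly $\lceil(3n-1)/2\rceil$.

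I expect the main obstacle to be the lower bound, specifically isolating the right invariants ($u$, $u^{*}$, and the claim that at most one column occupied by a unit row is itself a unit column) so that the two natural counting estimates combine to the sharp constant. The cross-separation analysis must be handled carefully, since the configuration that has to be excluded is precisely that of two isolated edges spanning a $C_4$. The upper bound, by contrast, reduces to a clean tiling argument once Lemma~\ref{lem:girth4} is available.
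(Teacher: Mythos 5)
Your proof is correct, and while your upper-bound construction (vertex-disjoint copies of $P_4$ plus one isolated edge when $n$ is odd, certified by Lemma~\ref{lem:girth4}) coincides with the paper's, your lower bound takes a genuinely different route. The paper argues structurally: it considers the connected components of the subgraph induced by the code, rules out components of order~3 (no connected pendant-free graph on three vertices), and shows that at most one component can have order at most~2 --- two $K_2$'s, a $K_2$ together with an isolated vertex, or two isolated vertices each force an undominated or unseparated pair of edges --- and then counts edges component by component, a tree component on $n_i\geq 4$ vertices contributing $n_i-1\geq \tfrac34 n_i$ edges. You instead double-count code cells by rows and by columns of the $n\times n$ incidence array: with $u$ unit rows and $u^{*}$ unit columns you get $m\geq 2n-u$ and $m\geq 2n-u^{*}$, and your cross-separation analysis (two isolated cells in general position leave the two opposite cells of their $C_4$ unseparated --- exactly the obstruction behind the $C_4$ caveat of Lemma~\ref{lem:girth4}) yields the overlap bound $u+u^{*}\leq n+1$, whence $2m\geq 3n-1$; the empty-column case is dispatched separately by $m\geq 2(n-1)$. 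The two arguments exploit corresponding elementary obstructions (your unit and empty lines are the code-degree-one and uncovered vertices, your isolated cells the $K_2$-components), but the bookkeeping differs: the paper's component count is more structural, describes what near-optimal codes look like, and is the template reused elsewhere in the paper (for $K_n$ and for Theorem~\ref{Thm:Low1}), while your degree count is more local and makes the sharp constant drop out of a single inequality --- indeed $u+u^{*}=n+1$ is attained precisely by the odd-$n$ extremal configuration, so your bound is tight where it must be. One small presentational point: in your empty-column case, the nonexistence of unit columns follows from your \emph{same-row} criterion (an empty column $j_0$ and a unit column $l$ with $C_l=\{k\}$ leave the cells $(k,j_0)$ and $(k,l)$ unseparated, since the lone cell of column $l$ already lies in row $k$), not from the ``crucial step''; you should cite the right condition there, but the claim itself is sound.
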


\begin{proof}
Let $X$ and $Y$ be the two parts of $K_{n,n}$. If $n$ is even, then let
$\displaystyle \{A_i\}_{i=1} ^{\frac{n}{2}}$
be a partition of vertices such that each $A_i$ has exactly two vertices in $X$ and
two in $Y$. Let $G_i$ be a subgraph of $K_{n,n}$ isomorphic to $P_4$ and with
$A_i$ as its vertices. 
If $n$ is odd, let $A_1$ be of size~2 and having exactly one element from $X$ and
one element from $Y$ and let also $G_1$ be the subgraph (isomorphic to $K_2$) induced
by $A_1$. Then we define $A_i$'s and $G_i$'s ($i\geq 2$) as in the previous case (for
$K_{n-1,n-1}$). By Lemma~\ref{lem:girth4}, the set of edges in the $G_i$'s induces an
edge-identifying code of $K_{n,n}$ of size $\left\lceil \frac{3n-1}{2}\right\rceil$. To complete
the proof we show that there cannot be any smaller edge-identifying code.

Let $\mathcal{C}_E$ be an edge-identifying code of $G$. Let $G_1, G_2, \ldots,
G_k$ be the connected components of $(X\cup Y, \mathcal{C}_E)$. The proof will
be completed if we show that except possibly one, every $G_i$ must
have at least four vertices. To prove this claim we first note that
there is no connected pendant-free graph on three vertices. We now
suppose $G_1$ and $G_2$ are both of order~2. Then the two edges
connecting $G_1$ and $G_2$ are not separated. If $G_1$ is of order~1
and $G_2$ is of order~2, then the edge connecting $G_1$ to $G_2$ is
not identified from the edge of $G_2$. If $G_1$ and $G_2$ are both of
order~1, then both of their vertices must be in the same part of the
graph as otherwise the edge connecting them is not dominated by
$\mathcal{C}_E$. But now for any vertex $x$ which is not in the same part as
$G_1$ and $G_2$, the edges connecting $x$ to $G_1$ and $G_2$ are not
separated.
\end{proof}

The following examples show that if true, the upper bound of Conjecture~\ref{ConjFKKR} is tight even in the class of line graphs. These examples were first introduced in \cite{FKKR10} but without using the notion
of edge-identifying codes.

\begin{proposition}\label{LineGraphofRegulars}
Let $G$ be a $k$-regular multigraph 
($k\geq 3$). Let $G_1$ be obtained from $G$
by subdividing each edge exactly once. Then
$\EID(G_1)=(k-1)|V(G)|=|E(G_1)|-\frac{|E(G_1)|}{2k-2}=
|V(\mathcal L(G_1))|-\frac{|V(\mathcal L(G_1))|}{\Delta(\mathcal L(G_1))}$.
\end{proposition}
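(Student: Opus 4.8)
First I would record the parameters and, crucially, identify the structure of $\mathcal{L}(G_1)$. Writing $n=|V(G)|$, subdividing each edge once gives $|V(G_1)|=n+\frac{kn}{2}$ and $|E(G_1)|=kn$, with every original vertex keeping degree $k$ and every subdivision vertex having degree $2$. Since every edge of $G_1$ joins an original vertex to a subdivision vertex, each edge lies in a unique \emph{star} $Q_u$ (the $k$ edges at an original vertex $u$), and these $n$ stars of size $k$ partition $E(G_1)$. In $\mathcal{L}(G_1)$ each $Q_u$ becomes a clique $K_k$, while each subdivision vertex $w$ (coming from an edge $ab$ of $G$) contributes exactly the single adjacency between its two halves $aw\in Q_a$ and $bw\in Q_b$. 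Hence $\mathcal{L}(G_1)$ is exactly $n$ disjoint copies of $K_k$ together with a perfect matching $m$ joining distinct cliques, and for every $x\in Q_u$ one has $N[x]=Q_u\cup\{m(x)\}$. A direct computation then gives $\Delta(\mathcal{L}(G_1))=k$ and verifies $(k-1)n=|E(G_1)|-\frac{|E(G_1)|}{k}=|V(\mathcal{L}(G_1))|-\frac{|V(\mathcal{L}(G_1))|}{\Delta(\mathcal{L}(G_1))}$, so it remains to prove $\EID(G_1)=(k-1)n$.

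For the lower bound I would argue purely from separation inside the cliques. Fix an edge-identifying code $\mathcal{C}_E$ and let $U=E(G_1)\setminus\mathcal{C}_E$. For two halves $x,y$ in the same clique $Q_u$, the sets $N[x]\cap\mathcal{C}_E$ and $N[y]\cap\mathcal{C}_E$ agree on $\mathcal{C}_E\cap Q_u$ and differ only through $m(x)$ and $m(y)$; since $m$ is injective, $x$ and $y$ are separated iff at least one of $m(x),m(y)$ lies in $\mathcal{C}_E$. Thus for each $u$ the set $B_u=\{x\in Q_u:\ m(x)\in U\}$ satisfies $|B_u|\le 1$. Because the cliques partition the vertices and $m$ is a bijection, $\sum_u|B_u|=|m^{-1}(U)|=|U|$, so $|U|\le n$ and $|\mathcal{C}_E|=kn-|U|\ge(k-1)n$. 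Note this uses only separation, not domination.

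For the upper bound I would build a code meeting this value. I would choose a permutation $\sigma$ of $V(G)$ with $\sigma(u)$ adjacent to $u$ for every $u$; such $\sigma$ exists because the bipartite graph on two copies of $V(G)$ with $u^{+}\sim v^{-}$ iff $uv\in E(G)$ is $k$-regular, and regular bipartite (multi)graphs have a perfect matching. For each $u$ let $x_u$ be the half at $u$ of the edge $u\sigma(u)$, set $U=\{x_u:\ u\in V(G)\}$ (one uncovered vertex per clique) and $\mathcal{C}_E=E(G_1)\setminus U$, of size $(k-1)n$. Then I would verify $\mathcal{C}_E$ is edge-identifying: each clique keeps $k-1\ge 2$ code edges, which immediately gives domination and separates any two halves lying in different cliques; and a short check shows $|B_u|=1$ for each $u$ (the unique bad half being $m(x_{\sigma^{-1}(u)})$), which handles separation inside every clique.

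The structural reduction to ``cliques plus a perfect matching'' is the crux, after which both bounds are short. The step needing the most care is the upper bound, where one must (i) guarantee the routing permutation $\sigma$, for which the regular-bipartite matching argument is clean and also covers multigraphs with parallel edges, and (ii) confirm that $k\ge 3$ is exactly what keeps at least two code edges in each clique, so that domination and cross-clique separation come for free while within-clique separation is controlled by the $|B_u|\le 1$ count. One should also note at the outset that $G$ must be loopless, since a loop would create parallel edges in $G_1$ and hence twins; this is implicit in $G_1$ being edge-identifiable.
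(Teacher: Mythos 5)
Your proof is correct and essentially identical to the paper's: your sets $m(Q_u)$ with the bound $|B_u|\le 1$ are exactly the paper's partition $\{A_x\}$ of $E(G_1)$ with the observation $|\mathcal{C}_E\cap A_x|\ge k-1$, and your permutation $\sigma$ comes from the same perfect matching in the $k$-regular bipartite incidence multigraph that the paper uses to build the optimal code. The only differences are cosmetic: you make the ``disjoint $K_k$'s plus a perfect matching'' structure of $\mathcal{L}(G_1)$ explicit and verify the constructed code in detail where the paper merely asserts it; note also that your computation $\Delta(\mathcal{L}(G_1))=k$ is the correct one, the statement's middle term $\frac{|E(G_1)|}{2k-2}$ being a typo for $\frac{|E(G_1)|}{k}$.
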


\begin{proof}
Let $x$ be a vertex of $G_1$ of degree at least~3 (an original vertex from $G$). For
each edge $e_i^x$ incident to $x$, let ${e'_i}^x$ be the edge adjacent to $e_i^x$
but not incident to $x$ and let $A_x=\{{ e'^x_i}\}_{i=1}^k$. Then
$\{A_x ~ | ~x\in V(G) \}$ is a partition of $E(G_1)$. For any edge-identifying code $\mathcal{C}_E$ of $G_1$, if two elements of $A_x$,
say ${e'^x_1}$ and ${e'^x_2}$,  are both not in $\mathcal{C}_E$, then
$e^x_1$ and $e^x_2$ are not separated. Thus $|\mathcal{C}_E \cap A_x| \geq k-1$. This proves 
that $|\mathcal{C}_E| \geq (k-1)|V(G)|$. 

We now build an edge-identifying code of this size by choosing one
edge of each set $A_x$, in such a way that for each vertex $x$
originally from $G$, exactly one edge incident to $x$ is chosen. Then
the set of non-chosen edges will be an edge-identifying code. To
select this set of edges, one can consider the incident bipartite
multigraph $H$ of $G$: the vertex set of $H$ is $V\cup V'$ where $V$
and $V'$ are copies of $V(G)$ and there is an edge $xx'$ in $H$ if
$x\in V$, $x'\in V'$ and $xx'\in E(G)$. The multigraph $H$ is $k$-regular
and bipartite, thus it has a perfect matching $M$. For each vertex $x\in
V$, let $\rho(x)$ be the vertex in $V'$ such that $x\rho(x)\in M$. Let
now $e'^x_M$ be the edge of $G_1$ that belongs to the set $A_x$ and is
incident to $\rho(x)$ (in $G_1$). Finally, let
$\mathcal{C}_E=E(G_1)\setminus \{e'^x_M\}_{x\in V(G)}$. Exactly one
element of each $A_x$ is not in $\mathcal{C}_E$, and for each vertex
$x$, exactly one edge incident to $x$ is not in $\mathcal{C}_E$. This
implies that $\mathcal{C}_E$ is an edge-identifying code.
\end{proof}

For a simple example of the previous construction, let $G$ be the
multigraph on two vertices with $k$ parallel edges. Then $G_1\cong
K_{2,n}$ and therefore $\EID(K_{2,n})=2n-2$.

Hypercubes, being the natural ground of code-like structures, have been a center of focus for
determining the smallest size of their identifying codes. The hypercube of dimension $d$, denoted $\mathcal H_d$, is a graph whose vertices are elements
of $\mathbb Z_2^d$ with two vertices being adjacent if their difference is in the 
standard basis: $\{(1,0,0,\ldots, 0), (0,1,0,\ldots,0), \ldots, (0,0,\ldots,0,1) \}$.
The hypercube of dimension $d$ can also be viewed as the cartesian product of the hypercube 
of dimension $d-1$ and $K_2$. In this way of building $\mathcal H_d$ we add a new coordinate
to the left of the vectors representing the vertices of $\mathcal H_{d-1}$.

The problem of determining the identifying code number of hypercubes has proved to be a
challenging one from both theoretical and computational points of view. Today the precise identifying code number is known for only seven hypercubes \cite{CCHL10}.
In contrast, we show here that finding the edge-identifying code number of a hypercube is 
not so difficult. We first introduce the following general theorem.

\begin{theorem}\label{prop:low2}
Let $G$ be a connected  pendant-free graph. We have:
$$\EID(G)\geq \frac{|V(G)|}{2}.$$
\end{theorem}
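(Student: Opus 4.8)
The plan is to fix an arbitrary edge-identifying code $\mathcal{C}_E$ and prove the equivalent inequality $2|\mathcal{C}_E|\ge |V(G)|$. Let $S$ be the set of vertices incident to at least one edge of $\mathcal{C}_E$ and let $U=V(G)\setminus S$ be the \emph{uncovered} vertices. Writing the sum of code-degrees as $2|\mathcal{C}_E|=\sum_{w\in S}\deg_{\mathcal{C}_E}(w)$, the target is equivalent to $\sum_{w\in S}\bigl(\deg_{\mathcal{C}_E}(w)-1\bigr)\ge|U|$; that is, the total \emph{excess} code-degree must pay for every uncovered vertex. So everything reduces to charging each uncovered vertex to a distinct surplus incidence of the code.

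Next I would establish the structural facts forced by the two defining properties. From edge-domination, $U$ is an independent set and every neighbour of an uncovered vertex lies in $S$ (an edge with both ends in $U$ would be undominated). For an uncovered vertex $u$ and a neighbour $w$, the edge $uw$ carries the identifying set $N[uw]\cap\mathcal{C}_E=D(w)$, where $D(w)$ is the set of code edges incident to $w$; this is the engine of all the separation constraints. In particular, two uncovered neighbours of the same $w$ would receive the identical set $D(w)$, so at most one uncovered vertex can hang from $w$ through such a pure attachment, and that edge must still be separated from the code edges meeting $w$. Pushing this further rules out the degenerate cases: no uncovered vertex can be adjacent to an endpoint of an isolated code edge $wz$ (both $uw$ and $wz$ would have identifying set $\{wz\}$), and short tree components such as $P_3$ or a star cannot occur at all, since their code edges then fail to be pairwise separated.

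Then I would carry out the charging, processing the subgraph $(S,\mathcal{C}_E)$ component by component: reserve one incidence at each covered vertex, and send each uncovered vertex to a surplus incidence located at a neighbour of code-degree at least~$2$ (or, when its attachment point is a code-leaf, at the non-leaf endpoint of that leaf's code edge). The separation facts above guarantee both that such a surplus incidence exists and that the map is injective: an isolated code edge receives nothing, a path or larger tree component with $n_j$ vertices admits at most as many uncovered attachments as its excess $n_j-2$, and any component containing a cycle has even larger surplus. Summing over components yields $\sum_{w\in S}\bigl(\deg_{\mathcal{C}_E}(w)-1\bigr)\ge|U|$, hence $2|\mathcal{C}_E|\ge|V(G)|$ and $\EID(G)\ge|V(G)|/2$. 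Equality is realised by any perfect matching code, as for the Petersen graph.

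The hard part will be the global injectivity of the charging when an uncovered vertex is adjacent to several distinct code components or to several code vertices at once, together with the exhaustive verification that no small tree component whose surplus is strictly smaller than its number of attachable leaves can survive the separation requirement. Phrasing the assignment as a bipartite matching between $U$ and the surplus incidences and checking Hall's condition through the identity $N[uw]\cap\mathcal{C}_E=D(w)$ is the cleanest route I see to making this fully rigorous.
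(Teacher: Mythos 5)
Your proposal is correct, and underneath the discharging vocabulary it is essentially the paper's counting argument rearranged: your target $\sum_{w\in S}\bigl(\deg_{\mathcal{C}_E}(w)-1\bigr)\ge|U|$ is exactly the paper's per-component inequality $n_i+n_i'\le 2k_i$ (where $n_i,k_i$ are the order and size of a component of the code-induced subgraph and $n_i'$ counts its uncovered attachments) summed over components, and your separation facts --- at most one uncovered neighbour per covered vertex, no attachment at an endpoint of an isolated code edge, the constraint coming from $N[uw]\cap\mathcal{C}_E=D(w)$ --- are precisely the ones the paper uses. The real difference is in how the tree components are dispatched, which is exactly the step you flag as hard. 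The paper settles it per component with no matching argument at all: by Lemma~\ref{degree2inPendantFree} (a longest-path argument), a pendant-free tree component has two leaves each adjacent to a vertex of degree~2, those two degree-2 vertices can carry no attachment (for such a $w$ with leaf neighbour $x$, an uncovered $u$ would give $N[uw]\cap\mathcal{C}_E=D(w)=N[wx]\cap\mathcal{C}_E$), so a tree on $n_i$ vertices admits at most $n_i-2$ attachments, matching its excess, while a non-tree component has excess $2k_i-n_i\ge n_i$, trivially covering its at most $n_i$ attachments; a straight count then finishes, with no injectivity to verify (an uncovered vertex seen by several components is simply counted several times, which only helps). Moreover, your own observations already dissolve the Hall's-theorem detour: since $wx\in D(w)$, one has $N[wx]\cap\mathcal{C}_E=D(x)\cup D(w)=D(w)$ whenever $x$ is a code-leaf neighbour of $w$, so \emph{any} covered vertex with a code-leaf neighbour (not just the degree-2 ones) is barred from its own attachment; combined with the fact that pendant-freeness forbids two code-leaf neighbours at one vertex, each vertex of code-degree $d\ge 2$ receives at most one charge (its own attachment, or the single leaf attachment rerouted to it) against surplus $d-1\ge 1$, making your charging injective for purely local reasons. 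In short, the paper's component-wise count buys brevity and avoids any injectivity bookkeeping; your local excess-charging, once completed as above, buys independence from the pendant-free-tree lemma and from the case analysis on component types.
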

\begin{proof}
Let $\mathcal{C}_E$ be an edge-identifying code of $G$. Let $G'$ be the subgraph induced by
$\mathcal{C}_E$ and let $G_1,\ldots,G_s$ be the connected components of $G'$.
Let $n_i$ be the order of $G_i$ and $k_i$ be its size (thus $\sum_{i=1}^{s}k_i=|\mathcal{C}_E|$).
Let $X=V(G)\setminus V(G')$ and $n_i'$ be the number of vertices in $X$ that are joined
to a vertex of $G_i$ in $G$. We show that $n'_i+n_i\leq 2k_i$. If $k_i=1$, then clearly
$n'_i=0$ and $n'_i+n_i=2=2k_i$. If $G_i$ is a tree, then $n_i=k_i+1$ and, by Lemma~\ref{degree2inPendantFree},
$G_i$ must have two vertices of degree~$2$ each having a vertex of degree~1 as a neighbour.
Then no vertex of $X$ can be adjacent to one of these two vertices in $G$. Moreover,
each other vertex of $G_i$ can be adjacent to at most one vertex in $X$. So
$n'_i\leq k_i-1$, and finally $n_i+n'_i\leq 2k_i$.
If $G_i$ is not at tree, we have $n_i\leq k_i$ and $n'_i\leq n_i$
and, therefore, $n'_i+n_i\leq 2k_i$.
Finally, since $G$ is connected, each vertex in $X$ is connected to at least one
$G_i$. Hence by counting the number vertices of $G$ we have:
$$|V(G)|\leq \sum_{i=1}^{s}(n_i+n'_i) \leq 2\sum_{i=1}^s k_i \leq
2|\mathcal{C}_E|.$$
\end{proof}

Theorem~\ref{prop:low2} together with Lemma~\ref{lem:girth4} leads to the
following result:

\begin{corollary}\label{cor:PMgirth4}
Let $G$ be a triangle-free pendant-free graph. Suppose $G$ has a
perfect matching $M$ with the property that for any pair $xy$, $uv$
of edges in $M$, the set $\{x,y,u,v\}$ does not induce a $C_4$. Then
$M$ is an optimal edge-identifying code and
$\EID(G)=\frac{|V(G)|}{2}$.
\end{corollary}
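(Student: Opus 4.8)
The plan is to bracket $\EID(G)$ between $\frac{|V(G)|}{2}$ and $\frac{|V(G)|}{2}$, obtaining the upper bound from Lemma~\ref{lem:girth4} and the matching lower bound from Theorem~\ref{prop:low2}. Since $M$ is itself a perfect matching of size $\frac{|V(G)|}{2}$, proving that $M$ is an edge-identifying code simultaneously gives the upper bound and exhibits an optimal code, so the two halves of the statement are settled together.

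For the upper bound I would verify that $M$ meets every hypothesis of Lemma~\ref{lem:girth4}. The graph $G$ is triangle-free by assumption. Because $M$ is a perfect matching it is in particular an edge cover, so it covers all vertices of $G$. The subgraph $(V(G),M)$ induced by a matching is a disjoint union of single edges; having no two adjacent edges at all, it is trivially pendant-free. The key observation is that in this induced subgraph \emph{every} edge of $M$ is an isolated edge, so the condition in Lemma~\ref{lem:girth4} forbidding a pair $xy,uv$ of isolated edges from inducing a $C_4$ is precisely the hypothesis imposed on $M$ in the statement. Lemma~\ref{lem:girth4} then yields that $M$ is an edge-identifying code, whence $\EID(G)\leq |M|=\frac{|V(G)|}{2}$.

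For the lower bound I would invoke Theorem~\ref{prop:low2}. If $G$ is connected this is immediate: $\EID(G)\geq\frac{|V(G)|}{2}$. Combining the two inequalities gives $\EID(G)=\frac{|V(G)|}{2}$, and since $M$ attains this value it is an optimal edge-identifying code. The only point requiring a moment of care, and hence the mildest ``obstacle'' here, is that Theorem~\ref{prop:low2} is stated for connected graphs: if $G$ is disconnected one applies it to each connected component $G^{(j)}$ (each of which inherits triangle-freeness, pendant-freeness, and a perfect matching with the $C_4$-property from the restriction of $M$), uses the fact that edge-identifying codes split across components because $N[e]$ of any edge lies entirely within its own component, and sums the resulting bounds $\EID(G^{(j)})\geq\frac{|V(G^{(j)})|}{2}$ to recover $\EID(G)\geq\frac{|V(G)|}{2}$. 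Beyond this bookkeeping the result is a direct consequence of the two earlier statements.
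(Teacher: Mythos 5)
Your proof is correct and follows exactly the route the paper intends: the paper derives this corollary directly from Lemma~\ref{lem:girth4} (applied to the matching $M$, whose isolated edges are precisely the pairs constrained by the $C_4$-hypothesis) together with the lower bound of Theorem~\ref{prop:low2}. Your extra remark on handling disconnected $G$ by summing the bound over components is a harmless and valid piece of bookkeeping that the paper leaves implicit.
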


We note that in particular, if the girth of a graph $G$ is at least~5
and $G$ admits a perfect matching $M$, then $M$ is a minimum-size
identifying code of $G$. For example, the edge-identifying code of the
Petersen graph given in Figure~\ref{fig:petersen} is optimal.

As another application of Corollary~\ref{cor:PMgirth4}, we give the
edge-identifying code of all hypercubes of dimension $d\geq 4$.

\begin{proposition}\label{Formula:Hypercubes}
 For $d\geq 4$,  we have $\EID(\mathcal H_d)=2^{d-1}$.
\end{proposition}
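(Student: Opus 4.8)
The plan is to apply Corollary~\ref{cor:PMgirth4}. First I would record that $\mathcal{H}_d$ is bipartite, hence triangle-free, and that for $d\ge 4$ every vertex has degree $d\ge 4$, so $\mathcal{H}_d$ is pendant-free; thus Theorem~\ref{prop:low2} already gives $\EID(\mathcal{H}_d)\ge |V(\mathcal{H}_d)|/2 = 2^{d-1}$, and it remains only to produce a perfect matching meeting the hypothesis of Corollary~\ref{cor:PMgirth4}. Call a perfect matching $M$ of $\mathcal{H}_d$ \emph{good} if no two of its edges lie in a common $C_4$. The useful reformulation I would establish at the outset is that in $\mathcal{H}_d$ every $C_4$ is a $2$-dimensional face, and two disjoint edges lying in a common $4$-cycle must be its two opposite edges, which flip the same coordinate; consequently $M$ is good if and only if, for each direction $i$, the edges of $M$ that flip coordinate $i$ project (after deleting coordinate $i$) to a set of pairwise Hamming distance at least $2$ in $\mathbb{Z}_2^{d-1}$. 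Once this is in place, Corollary~\ref{cor:PMgirth4} turns any good matching into an optimal code of size $2^{d-1}$.

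To build good matchings for all $d\ge 4$ I would induct on $d$, viewing $\mathcal{H}_d$ as two copies $H^0,H^1$ of $\mathcal{H}_{d-1}$ joined by the perfect matching of edges flipping the new coordinate. The induction hypothesis I would carry is the slightly stronger statement that $\mathcal{H}_{d-1}$ admits \emph{two edge-disjoint} good matchings $M_1,M_2$. Given these, set $N_1$ to be $M_1$ on $H^0$ together with $M_2$ on $H^1$, and $N_2$ to be $M_2$ on $H^0$ together with $M_1$ on $H^1$. Both are perfect matchings of $\mathcal{H}_d$ using no cross edge, and they are edge-disjoint because $M_1,M_2$ are. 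Each $N_t$ is good: any $C_4$ contained in a single layer meets $N_t$ in at most one edge by the induction hypothesis, while a $C_4$ that uses the new coordinate has its two cross edges outside $N_t$ and its two remaining (parallel) edges sitting in $H^0$ and $H^1$ respectively, so they belong to $M_1$ and to $M_2$ and cannot both be chosen since $M_1,M_2$ are edge-disjoint. This is exactly where edge-disjointness is needed, and carrying two matchings rather than one is the crux that makes the induction close.

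For the base case $d=4$ I would exhibit an explicit good matching and then translate it. A convenient good matching $M$ uses each of the four directions on exactly two edges, chosen so that the two edges flipping coordinate $i$ project to an antipodal pair of $\mathbb{Z}_2^{3}$ (for instance dir.\ $1$: $0000,1000$ and $0111,1111$; dir.\ $2$: $0001,0101$ and $1010,1110$; dir.\ $3$: $0100,0110$ and $1001,1011$; dir.\ $4$: $0010,0011$ and $1100,1101$); then for every direction the two parallel edges are at projected Hamming distance $3$, so $M$ is good, and one checks directly that it is perfect. Since every translation $x\mapsto x+v$ is an automorphism preserving both directions and faces, $M+v$ is again good, and a short verification shows that $v=1100$ yields a translate sharing no edge with $M$. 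This provides the two edge-disjoint good matchings of $\mathcal{H}_4$ needed to start the induction, whence $\mathcal{H}_d$ has a good matching for every $d\ge 4$ and Corollary~\ref{cor:PMgirth4} gives $\EID(\mathcal{H}_d)=2^{d-1}$.

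I expect the main obstacle to be precisely the faces that use the newly added coordinate in the inductive step: copying a single good matching into both layers fails on exactly these faces, which is what forces the strengthening to two edge-disjoint good matchings and makes the base case (checking that $\mathcal{H}_4$, which has girth $4$, carries two such matchings) the real content of the argument.
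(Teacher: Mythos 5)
Your proof is correct, and its skeleton matches the paper's: the lower bound $\EID(\mathcal H_d)\ge 2^{d-1}$ comes from Theorem~\ref{prop:low2}, and the upper bound from a perfect matching no two of whose edges lie in a common $4$-cycle (the paper feeds this into Lemma~\ref{lem:girth4}; your use of Corollary~\ref{cor:PMgirth4} is the same thing packaged). Both arguments also hinge on a pair of edge-disjoint such matchings of $\mathcal H_4$: the paper exhibits one in Figure~\ref{fig:match}, while you construct one explicitly and certify it via your reformulation (goodness $=$ for each direction, the projections of the edges flipping that coordinate are pairwise at Hamming distance at least $2$); I checked your base data and it is sound, including that the translate by $1100$ shares no edge with $M$. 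Where you genuinely diverge is the inductive mechanism. The paper uses its two disjoint matchings only once, to assemble $M_5$ (exactly your $N_1$ at $d=5$), and thereafter carries a \emph{single} matching together with the invariant that no matching edge flips the first coordinate; the step $M_{d-1}\to M_d$ places $M_{d-1}$ in the layer $x_1=0$ and a translate of it in the layer $x_1=1$, and the coordinate invariant is what forces that translate to be edge-disjoint from the original so that no cross $4$-face contains two matching edges. (Note this step is delicate: the plain shift $\sigma(x)=x+(1,0,\ldots,0)$ as typeset would put every edge and its image on opposite sides of a common $4$-face, so the intended translate must also flip the old first coordinate, which is precisely where the maintained invariant is consumed.) You instead carry the stronger invariant ``two edge-disjoint good matchings'' all the way up, closing it with the swap construction $N_1,N_2$, where the cross faces are killed because $M_1\cap M_2=\emptyset$ by hypothesis rather than by a carefully chosen translation. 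Your version buys uniformity and robustness --- no special case $d=5$ versus $d\ge 6$, no coordinate bookkeeping, and the needed edge-disjointness is built into the induction hypothesis --- at the negligible price of verifying the slightly stronger two-matching statement at $d=4$, which both proofs need anyway.
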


\begin{proof}
By Theorem~\ref{prop:low2}, we have $\EID(\mathcal H_d)\geq 2^{d-1}$.
We will construct by induction a perfect matching $M_d$ of $\mathcal
H_d$ such that no pair of edges induces a $C_4$, for $d\geq 4$. By
Lemma~\ref{lem:girth4}, $M_d$ will be an edge-identifying code of
$\mathcal H_d$, proving the result. Two such matchings of $\mathcal
H_4$, which are also disjoint, are presented in
Figure~\ref{fig:match}.  The matching $M_5$ can now be built using
each of these two matchings of $\mathcal H_4$ --- one matching per copy
of $\mathcal H_4$ in $\mathcal H_5$. It is easily verified that $M_5$
has the required property. Furthermore, $M_5$ has the extra property
that for each edge $uv$ of $M_5$, $u$ and $v$ do not differ on the
first coordinate (recall that we build $\mathcal H_5$ from $\mathcal
H_4$ by adding a new coordinate on the left, hence the first
coordinate is a the new one).  We now build the matching $M_d$ of
$\mathcal H_d$ ($d\geq 6$) from $M_{d-1}$ in such a way that no two
edges of $M_d$ belong to a 4-cycle in $\mathcal H_d$ and that for each
edge $uv$ of $M_d$, $u$ and $v$ do not differ on the first
coordinate. To do this, let $\mathcal H'_1$ be the copy of $\mathcal
H_{d-1}$ in $\mathcal H_d$ induced by the set of vertices whose first
coordinate is~0. Similarly, let $\mathcal H'_2$ be the copy of
$\mathcal H_{d-1}$ in $\mathcal H_d$ induced by the other
vertices. Let $\mathcal M'_1$ be a copy of $M_{d-1}$ in $\mathcal
H'_1$ and let $\mathcal M'_2$ be a matching in $\mathcal H'_2$
obtained from $\mathcal M'_1$ by the following transformation: for
$e=uv\in\mathcal M'_1$, define $\psi(e)=\sigma(u)\sigma(v)$ where
$\sigma (x)= x+(1,0,0,\ldots,0)$.  It is now easy to check that the
new matching $M_d=\mathcal M'_1\cup\mathcal M'_2$ has both properties
we need.
\end{proof}

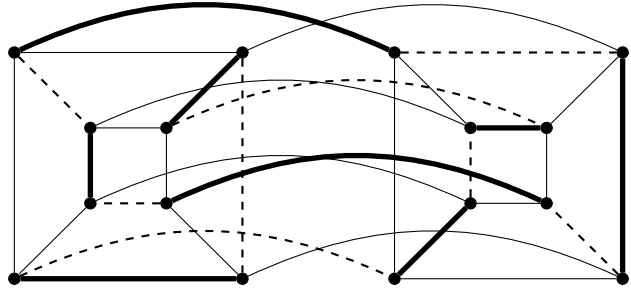
\begin{figure}[!ht]
\begin{center}
\begin{tikzpicture}[join=bevel,inner sep=0.5mm]
\node[graphnode](0000) at (0,0) {};
\node[graphnode](0001) at (0,1) {};
\node[graphnode](0010) at (1,0) {};
\node[graphnode](0011) at (1,1) {};

\node[graphnode](0100) at (-1,-1) {};
\node[graphnode](0101) at (-1,2) {};
\node[graphnode](0110) at (2,-1) {};
\node[graphnode](0111) at (2,2) {};

\draw[-] (0001)--(0011)--(0010);
\draw[-,dashed,thick] (0010)--(0000) (0111)--(0110) (0101)--(0001);
\draw[-] (0100)--(0101)--(0111);
\draw[-] (0100)--(0000);
\draw[-] (0111)--(0011);
\draw[-] (0110)--(0010);

\begin{scope}[xshift = +5cm]
\node[graphnode](1000) at (0,0) {};
\node[graphnode](1001) at (0,1) {};
\node[graphnode](1010) at (1,0) {};
\node[graphnode](1011) at (1,1) {};

\node[graphnode](1100) at (-1,-1) {};
\node[graphnode](1101) at (-1,2) {};
\node[graphnode](1110) at (2,-1) {};
\node[graphnode](1111) at (2,2) {};

\draw[-] (1001)--(1011)--(1010)--(1000);
\draw[-] (1100)--(1101)  (1111)--(1110)--(1100);
\draw[-,dashed, thick] (1101) -- (1111) (1000)--(1001) (1110)--(1010);
\draw[-] (1111)--(1011);
\draw[-] (1101)--(1001);
\end{scope}

\draw[line width=2pt, out=25,in=155] (0101) to (1101);
\draw[line width=2pt, out=25,in=155] (0010) to (1010);

\draw[edge, out=25,in=155,dashed,thick] (0100) to (1100);
\draw[edge, out=25,in=155,dashed,thick] (0011) to (1011);

\draw[edge, out=25,in=155] (0000) to (1000);
\draw[edge, out=25,in=155] (0001) to (1001);
\draw[edge, out=25,in=155] (0110) to (1110);
\draw[edge, out=25,in=155] (0111) to (1111);

\draw[line width=2pt] (0000)--(0001);
\draw[line width=2pt] (0011)--(0111);
\draw[line width=2pt] (0100)--(0110);
\draw[line width=2pt] (1110)--(1111);
\draw[line width=2pt] (1100)--(1000);
\draw[line width=2pt] (1011)--(1001);

\end{tikzpicture}
\caption{\label{fig:match} Two disjoint edge-identifying codes of $\mathcal
H_4$}
\end{center}
\end{figure}

We note that the formula of Proposition \ref{Formula:Hypercubes} does
not hold for $d=2$ and $d=3$. For $d=2$ the hypercube $\mathcal H_2$
is isomorphic to $C_4$ and thus $\EID(\mathcal H_2)=3$. For $d=3$, we
note that an identifying code of size~4, if it exists, must be a
matching with no pair of edges belonging to a 4-cycle.  But this is
not possible. An identifying code of size~5 is shown in
Figure~\ref{fig:H3}, therefore $\EID(\mathcal H_3)=5$.

\begin{figure}[!ht]
\begin{center}
\begin{tikzpicture}[join=bevel,inner sep=0.5mm]
\node[graphnode](000) at (0,0) {};
\node[graphnode](001) at (0,1) {};
\node[graphnode](010) at (1,0) {};
\node[graphnode](011) at (1,1) {};

\node[graphnode](100) at (-1,-1) {};
\node[graphnode](101) at (-1,2) {};
\node[graphnode](110) at (2,-1) {};
\node[graphnode](111) at (2,2) {};

\draw[edge]  (000)--(001)--(011)--(010)--(000);
\draw[edge]  (100)--(110)--(111)--(101)--(100);
\foreach \I in {00,01,10,11}
\draw[edge] (1\I)--(0\I);

\draw[line width=2pt] (100)--(101)--(111)--(011) (001)--(000) (010)--(011);

\end{tikzpicture}
\caption{\label{fig:H3} An optimal edge-identifying code of $\mathcal H_3$}
\end{center}
\end{figure}
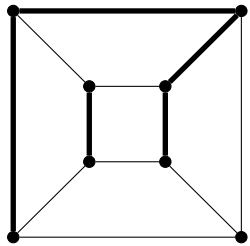

\section{Lower Bounds}\label{LowerBound}
Recall from Theorem~\ref{boundlogn} that $\ID(G)$ is bounded below by a function of
the order of $G$. As mentioned before, this bound is tight. Let $\mathcal C$ be a set of $c$
isolated vertices. We can build a graph $G$ of order $2^c-1$ such that $\mathcal C$ is an
identifying code of $G$.  To this end, for every
subset $X$ of $\mathcal C$ with $|X|\geq 2$, we associate a new vertex which is joined to
all vertices in $X$ and only to those vertices. Then, it is easily seen that $\mathcal C$ is an
identifying code of this graph. However, the graph built in this way is far from being a
line graph as it contains $K_{1,t}$, even for large values of $t$. In fact this lower bound
turns out to be far from being tight for the family of line graphs. 
In this section we
give a tight lower bound on the size of an edge-identifying code of a graph in terms of
the number of its edges. Equivalently we have a lower bound for the size of an identifying
code in a line graph in terms of its order. This lower bound is of the order $\Theta(\sqrt n)$
and thus is a much improved lower bound with respect to the general bound of
Theorem~\ref{boundlogn}.

Let $G$ be a pendant-free graph and let $\mathcal{C}_E$ be an edge-identifying code of $G$.
To avoid trivialities such as having isolated vertices we may assume $G$ is connected.
We note that this does not mean that
the subgraph induced by $\mathcal{C}_E$ is also connected, in fact we observe almost the contrary,
i.e. in most cases, an edge-identifying code of a minimum size  will induce a disconnected
subgraph of $G$. We first prove a lower bound for the case when an edge-identifying code
induces a connected subgraph.

\begin{theorem}\label{Thm:conecLow}
 If an edge-identifying code $\mathcal{C}_E$ of a nontrivial graph $G$ induces a connected
subgraph of $G$ which is not isomorphic to $K_2$, then $G$ has at most
$\binom{|\mathcal{C}_E|+2}{2}-4$ edges.
Furthermore, equality can only hold if $\mathcal{C}_E$ induces a path.
\end{theorem}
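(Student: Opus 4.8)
The plan is to count, for each edge $e$ of $G$, its \emph{signature} $N[e]\cap\mathcal{C}_E$ and to bound the number of distinct signatures. Let $H$ be the (connected, pendant-free) subgraph induced by $\mathcal{C}_E$, put $k=|\mathcal{C}_E|$ and $m=|V(H)|$, and for a vertex $w\in V(H)$ let $D(w)$ be the set of code edges incident to $w$. For an edge $e=uv$ of $G$ one has $N[e]\cap\mathcal{C}_E=D(u)\cup D(v)$ if $u,v\in V(H)$, and $=D(u)$ if only $u\in V(H)$; both ends lying outside $V(H)$ is impossible since $\mathcal{C}_E$ dominates $e$. As $\mathcal{C}_E$ separates all edges, the signatures are pairwise distinct, so
$$|E(G)|\le |T|,\qquad T:=\{D(w):w\in V(H)\}\cup\{D(u)\cup D(v):\{u,v\}\in\binom{V(H)}{2}\}.$$
Since $H$ is connected, $m\le k+1$, with equality exactly when $H$ is a tree. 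The heart of the matter is to show $|T|\le\binom{k+2}{2}-4$.

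First I would dispose of the non-tree case: there $m\le k$, so $|T|\le m+\binom{m}{2}=\binom{m+1}{2}\le\binom{k+1}{2}=\binom{k+2}{2}-(k+1)$, which is at most $\binom{k+2}{2}-4$ because a connected pendant-free graph other than $K_2$ has $k\ge 3$. So I may assume $H$ is a tree, i.e. $m=k+1$ and $m+\binom{m}{2}=\binom{k+2}{2}$. The bound then amounts to exhibiting at least four \emph{redundant} slots among the $m$ singletons $D(w)$ and the $\binom{m}{2}$ unions $D(u)\cup D(v)$, i.e. pair-slots whose value already occurs as the value of another slot.

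I would use two families of redundant pair-slots. \emph{Type P:} for every leaf $\ell$ of $H$, with neighbour $p$, the set $D(\ell)$ is the single edge $\ell p\in D(p)$, so $\{\ell,p\}$ yields $D(\ell)\cup D(p)=D(p)$, the value of the singleton slot $\{p\}$. \emph{Type Q:} if moreover $p=a$ has degree $2$, with second neighbour $a'$, then the edge $aa'$ of $D(a)$ already lies in $D(a')$, whence $D(\ell)\cup D(a')=D(a)\cup D(a')$, so $\{\ell,a'\}$ duplicates the pair-slot $\{a,a'\}$. If $H$ has $L$ leaves, Type~P gives $L$ such slots, and Lemma~\ref{degree2inPendantFree} guarantees at least two leaves with a degree-$2$ neighbour, hence at least two Type~Q slots. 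The plan is to verify that these $L+2$ pair-slots are pairwise distinct and that each is matched to a slot outside the list, which gives $|T|\le\binom{k+2}{2}-(L+2)\le\binom{k+2}{2}-4$ since $L\ge 2$. Equality then forces $L+2\le 4$, hence $L=2$ with $H$ a tree, i.e. $H$ a path, yielding the ``only if'' clause.

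The main obstacle is precisely this bookkeeping of distinctness and matching, which I expect to be delicate but elementary. All the needed non-degeneracies come from $H$ being pendant-free and different from $K_2$ and $P_3$: two leaves never share a neighbour and a leaf is never adjacent to a leaf (else a type-2 pendant or $K_2$), so the $L$ Type~P slots are distinct and each matches a surviving \emph{singleton} slot; the vertex $a'$ is never a leaf (same pendant obstruction), so each Type~Q slot $\{\ell,a'\}$ differs from every Type~P slot and from the other Type~Q slot, while its match $\{a,a'\}$, having no leaf endpoint, is itself never removed. The genuinely special configurations—two short branches sharing vertices, or the excluded $P_3$—must be checked by hand, but pendant-freeness rules them out; once the matching is in place the count is immediate.
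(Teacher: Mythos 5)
Your proposal is correct and is essentially the paper's own proof: the same signature count $|E(G)|\le m+\binom{m}{2}$ over the sets $D(u)$ and $D(u)\cup D(v)$, the same reduction to the pendant-free-tree case, and the same two redundancy types (a leaf $\ell$ with neighbour $p$ giving $D(\ell)\cup D(p)=D(p)$, and Lemma~\ref{degree2inPendantFree} giving $D(\ell)\cup D(a')=D(a)\cup D(a')$ at two places), with equality forcing exactly two leaves and hence a path. One small point to make explicit: in the non-tree case, pendant-freeness and $H\ncong K_2$ force $m\ge 4$, hence $k\ge 4$ and \emph{strict} inequality there, so the equality clause genuinely reduces to the tree case; with that noted, your bookkeeping (which you carry out in more detail than the paper does) goes through as you describe.
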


\begin{proof}
Let $G'$ be the subgraph induced by $\mathcal{C}_E$.
Since we assumed $G'$ is connected, and since $G'$ is pendant-free, it cannot have
three vertices. Since we assumed $G'\ncong K_2$, we conclude that $G'$ has at least
four vertices. For each vertex $x$ of $G'$, let $\mathcal{C}_E^x$ be the set of all edges incident
to $x$ in $G'$. Let $e=uv$ be an edge of $G$, then one or both of $u$ and $v$ must
be in $V(G')$. Therefore, depending on which of these vertices belong to $\mathcal{C}_E$, $e$ is uniquely determined by either $\mathcal{C}_E^u$ (if $u\in V(G')$ and $v\notin V(G')$), or $\mathcal{C}_E^v$ (if $u\notin V(G')$ and $v\in V(G')$),
or $\mathcal{C}_E^u\cup \mathcal{C}_E^v$ (if both $u,v\in V(G')$). The total number of sets of this form can be at most $|V(G')|+\binom{|V(G')|}{2}=\binom{|V(G')|+1}{2}$,
thus if $|V(G')|\leq |\mathcal{C}_E|$ we are done.
Otherwise, since $G'$ is connected, $|V(G')|=|\mathcal{C}_E|+1$ and $G'$ is a pendant-free tree on at
least 4 vertices. If $v$ is a vertex of degree~1 adjacent to $u$, then we have
$\mathcal{C}_E^v=\{uv\}$ but  $ uv \in \mathcal{C}_E^u$ and, therefore, $\mathcal{C}_E^u=\mathcal{C}_E^u\cup \mathcal{C}_E^v$.
On the other hand, by Lemma~\ref{degree2inPendantFree}, there are two vertices of degree~2 that have neighbours of degree~1. Let $u$ be such a vertex, let $v$ be its neighbour of
degree~1 and $x$ be its other neighbour. Then $\mathcal{C}_E^v=\{uv\}$ and $\mathcal{C}_E^u=\{uv, ux\}$ and,
therefore,  $\mathcal{C}_E^u\cup \mathcal{C}_E^x=\mathcal{C}_E^v\cup \mathcal{C}_E^x$. Thus the total number of distinct sets of the
form $\mathcal{C}_E^y$ or $\mathcal{C}_E^y\cup \mathcal{C}_E^z$ is at most $\binom{|\mathcal{C}_E|+2}{2}-4$. But if equality holds
there can only be two vertices of degree~1  in $G'$ and hence $\mathcal{C}_E$ is a path.
\end{proof}

We note that if this bound is tight, then $G'$ is a path. Furthermore, for each path
$P_{k+1}$ one can build many graphs which have $P_{k+1}$ as an edge-identifying code
and have $\binom{k+2}{2}-4$ edges. The set of all these graphs will be denoted by
$\mathcal J_k$. An example of such a graph is obtained from $K_{k+2}$ by removing
a certain set of four edges as shown in Figure~\ref{fig:Ji}. Note that every other member
of $\mathcal J_k$ is obtained from the previous example by splitting the vertex that
does not belong to $P_{k+1}$ (but without adding any new edge).

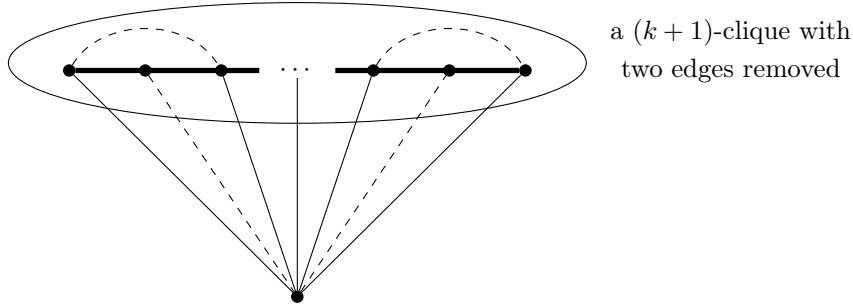
\begin{figure}[!ht]
\begin{center}
\begin{tikzpicture}[join=bevel,inner sep=0.5mm]
\draw[edge, draw = black] (4,3.1) ellipse(3.8 and 0.8) node{};
\node at (9.7,3.5) {a $(k+1)$-clique with};
\node at (9.7,3) {two edges removed};
\foreach \i in {1,2,3,5,6,7}
  \node[draw,shape=circle,fill=black,draw=black,minimum size=0.6pt,inner sep=1.5pt] (x\i) at (\i,3) {};
\draw[-, line width=2pt] (x1) -- (3.5,3) (4.5,3) -- (x7);
\node at (4,3) {$\cdots$};
\node[draw,shape=circle,fill=black,draw=black,minimum size=0.6pt,inner sep=1.5pt] (v) at (4,0) {};
\foreach \i in {1,3,5,7}
\draw[edge] (v) -- (x\i);
\draw[edge] (v) -- (4,2.9);
\draw[-,dashed] (x2) -- (v) -- (x6);
\draw[-, dashed, out=60,in=120] (x1) to (x3) (x5) to (x7);
\end{tikzpicture}
\caption{\label{fig:Ji} An extremal graph of $\mathcal{J}_k$ with its connected edge-identifying code}
\end{center}
\end{figure}

Next we consider the case when the subgraph induced by $\mathcal{C}_E$ is not necessarily connected. 

\begin{theorem}\label{Thm:Low1}
 Let $G$ be a pendant-free graph and let $\mathcal{C}_E$ be an edge-identifying code of
$G$ with $|\mathcal{C}_E|=k$. Then we have:

 \begin{equation*} |E(G)| \leq 
  \begin{cases} 
  {\binom{\frac{4}{3} k }{2}},  & \mbox{if } k\equiv 0 \bmod 3 \\
  {\binom{\frac{4}{3} (k-1)+1}{2}}+1,  & \mbox{if } k\equiv 1 \bmod 3 \\
  {\binom{\frac{4}{3} (k-2)+2}{2}}+2,  & \mbox{if } k\equiv 2 \bmod 3. \\
  \end{cases}
 \end{equation*}

\end{theorem}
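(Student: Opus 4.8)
The plan is to bound $|E(G)|$ by counting the distinct signatures $N[e]\cap\mathcal{C}_E$, using the component structure of the subgraph induced by $\mathcal{C}_E$, and then to optimise that count over all admissible shapes of this subgraph. The final three-case formula should fall out of a single "master inequality'' evaluated in each residue class of $k$ modulo $3$.

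First I would set up the framework exactly as in the proof of Theorem~\ref{Thm:conecLow}. Let $G'$ be the subgraph induced by $\mathcal{C}_E$ and let $G_1,\dots,G_s$ be its connected components, with $k_i=|E(G_i)|$ so that $\sum_i k_i=k$. Each $G_i$ is connected and pendant-free, hence (as observed in the proof of Theorem~\ref{Thm:conecLow}) is either $K_2$ or has at least four vertices. For a vertex $x$ write $\mathcal{C}_E^x$ for the set of code edges incident to $x$ (empty if $x\notin V(G')$). Since $\mathcal{C}_E$ is edge-dominating, every edge $e=xy$ of $G$ has an endpoint in $V(G')$ and its signature is exactly $\mathcal{C}_E^x\cup\mathcal{C}_E^y$; as $\mathcal{C}_E$ is separating these signatures are pairwise distinct, so $|E(G)|$ equals the number of distinct sets of this form.

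Next I would classify each signature by its \emph{support}, the set of components it meets, which has size one or two. Let $d_i$ be the number of distinct vertex-signatures $\mathcal{C}_E^x$ occurring in $G_i$, and put $D=\sum_i d_i$; signatures of distinct components are disjoint as edge sets, so those with different supports are automatically distinct. A signature supported on $\{i,j\}$ determines its two parts $\mathcal{C}_E^x\subseteq E(G_i)$ and $\mathcal{C}_E^y\subseteq E(G_j)$, so there are at most $d_id_j$ of them, while a signature supported on a single $G_i$ is a union of at most two vertex-signatures of $G_i$; let $W_i$ bound their number. This gives $|E(G)|\le \sum_{i<j}d_id_j+\sum_i W_i=\tfrac12 D^2+\sum_i\bigl(W_i-\tfrac12 d_i^2\bigr)$.

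Finally I would carry out the optimisation, which is the heart of the argument. From Lemma~\ref{degree2inPendantFree} one obtains $d_i\le n_i\le k_i+1$, with equality only for path-like trees, together with $d_i=1$ for $K_2$; a direct check supplies the relevant small values, e.g.\ $(k_i,d_i,W_i)=(3,4,6)$ for $P_4$ and $(1,1,1)$ for $K_2$. It then remains to maximise $\tfrac12 D^2+\sum_i\bigl(W_i-\tfrac12 d_i^2\bigr)$ subject to $\sum_i k_i=k$ over all admissible component types. The quadratic term $\tfrac12 D^2$ dominates, so the optimum pushes $D$ as high as possible and thus rewards components of maximal efficiency $d_i/k_i$; among admissible components this ratio is maximised (at $4/3$) by $P_4$, forcing $D\le\tfrac43 k$. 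The main obstacle is to prove rigorously that no mixture of less efficient but higher-profit components can beat the all-$P_4$ configuration: some components, such as $C_4$ with $(d_i,W_i)=(4,9)$, have \emph{positive} profit $W_i-\tfrac12 d_i^2$, so one cannot bound each term separately and must control the global trade-off between increasing $D$ and increasing $\sum_i\bigl(W_i-\tfrac12 d_i^2\bigr)$. The second delicate point is the residue of $k$ modulo $3$: the one or two leftover edges are best accommodated by $K_2$-components, whose contribution to the sum is exactly what produces the additive corrections $+1$ and $+2$ in the cases $k\equiv 1$ and $k\equiv 2$. Substituting the optimal value of $D$ in each residue class, the master inequality collapses to $\binom{D}{2}+r$ with $r\in\{0,1,2\}$, which is precisely the three stated bounds.
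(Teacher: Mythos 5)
Your counting framework is sound, and it is a genuinely different route from the paper's: the signature of an edge $xy$ is indeed $\mathcal{C}_E^x\cup\mathcal{C}_E^y$, the decomposition by support is valid (signatures split uniquely across components since the edge sets $E(G_i)$ are disjoint), and your master inequality $|E(G)|\le\sum_{i<j}d_id_j+\sum_i W_i=\tfrac12D^2+\sum_i\bigl(W_i-\tfrac12 d_i^2\bigr)$ is correct. Your small-case data check out ($(k_i,d_i,W_i)=(3,4,6)$ for $P_4$, $(1,1,1)$ for $K_2$ since both endpoints of a $K_2$ share the signature $\{uv\}$, and $(4,9)$ for $C_4$), and I verified that the inequality, evaluated at the conjectured optima (all $P_4$'s plus one or two exceptional components according to $k\bmod 3$), collapses exactly to the three stated bounds; e.g.\ for $k\equiv 1$, with $D=\tfrac43(k-1)+1$ and profits $-\tfrac23(k-1)+\tfrac12$, one gets precisely $\binom{D}{2}+1$. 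However, the proof is not complete: the maximization over all admissible component multisets, which you yourself flag as ``the main obstacle,'' is the entire content of the theorem and you leave it asserted rather than proved. Two things are concretely missing: (i) general upper bounds on $W_i$ per component type --- for tree components you need the $-4$ deficit coming from the pendant-free structure (the argument inside Theorem~\ref{Thm:conecLow}, via Lemma~\ref{degree2inPendantFree}), and note that $d_i=n_i=k_i+1$ for \emph{every} tree component, not only paths, so paths are singled out only by $W_i$; and (ii) the global trade-off argument, which genuinely cannot be done termwise: although $d_i\le\tfrac43 k_i$ holds for each admissible component (hence $D\le\tfrac43k$), components such as $C_4$ have positive profit $W_i-\tfrac12 d_i^2=1$, so your heuristic ``the quadratic term dominates'' does not by itself exclude mixtures for every $k$, and a rigorous two-parameter optimization in $(D,\text{profit})$ with the constraint $\sum_ik_i=k$ still has to be carried out.

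For comparison, the paper sidesteps the explicit optimization by an extremal exchange argument: it takes $G$ with the maximum number of edges among graphs with $\EID(G)=k$, observes that maximality forces all admissible cross edges to be present, and then shows by local replacements (substituting a component $G_i$ by a member of $\mathcal{J}_{n'_i-1}$, splitting any component on at least seven vertices into $J_3\cup J_{n'_1-4}$, and seven further explicit transformations) that every component of the code-induced subgraph is a $P_4$ except for at most two exceptional components ($P_2$, $P_5$ or $P_6$ depending on $k\bmod 3$); the formula then follows by direct counting (six edges inside each $P_4$-gadget and sixteen between each pair). If you wanted to complete your route, you would end up proving essentially the same finite list of exclusions --- which components can appear and in what combinations --- only expressed as inequalities between profits and efficiencies rather than as edge-count exchanges; your approach buys a cleaner conceptual frame and exact evaluation at the optimum, but as it stands the decisive step is a gap, not a proof.
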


\begin{proof}
Let $G$ be a graph with maximum number of edges among all graphs with $\EID(G)=k$. It can be easily checked that for $k=1,2$ or $3$, the maximum number of edges of $G$ is~1, 3 or~6 respectively. For $k\geq 4$, we prove a slightly stronger statement: given an edge-identifying
code $\mathcal{C}_E$ of $G$ of size $k$, all but at most two of the connected components of the subgraph induced
by $\mathcal{C}_E$  must be isomorphic to $P_4$.
When there is only one component not isomorphic to $P_4$, it must be isomorphic to a
$P_2$, a $P_5$ or a $P_6$. If there are two such components, then they can be two copies
of $P_2$, a $P_2$ with a $P_5$, or just two copies of $P_5$. This depends on the
value of $k \bmod 3$.

To prove our claim let $G$ be a graph as defined above, let $\mathcal{C}_E$ be an edge-identifying
code of size~$k$ of $G$ and let $G'$ be the subgraph induced by $\mathcal{C}_E$.
For each vertex $u\in V(G)\setminus V(G')$, we can assume that $u$ has degree~1:
if $u$ has degree $d>1$,  with neighbours $v_1, \ldots, v_d$ necessarily in $V(G')$,
then replace $u$ by $d$ vertices of degree~1:
$u_1, \ldots, u_d$, connecting $u_i$ to $v_i$.
Then the number of edges does not change, and the code $\mathcal{C}_E$ remains an
edge-identifying code of size~$k$, thus it suffices to prove our claim for this new graph.
Let $G'_1, G'_2, \ldots, G'_r$ be the connected components of $G'$ with $|V(G'_i)|=n'_i$.
For each $i\in \{1,\ldots,r\}$, let $G_i$ be the graph induced by the vertices of $G'_i$ 
and the vertices connected to $G'_i$ only.
To each vertex $x$ of $G'$ we assign the set $\mathcal{C}_E^x$ of edges in $G'$ incident to $x$.

We first note that no $G'_i$ can be of order~3, because there is no connected
pendant-free graph on three vertices. If $u$ and $v$ are vertices from two disjoint
components of $G'$ with each component being of order at least~4, then the pair $u,v$
is uniquely determined by $\mathcal{C}_E^u\cup \mathcal{C}_E^v$, thus by maximality of $G$, $uv$ is an edge of $G$.
If a component of $G'$ is isomorphic to $K_2$, assuming $u$ and
$u'$ are vertices of this component, then for any other vertex $v$ of $G'$ exactly one of $uv$
or $u'v$ is an edge of $G$.

We now claim that each $G'_i$ with $n'_i\geq 4$ is a path. By
contradiction, if a $G'_i$ is not a path, we replace $G_i$ by a
member $J_{n'_i-1}$ of $\mathcal J_{n'_i-1}$ with $P_{n'_i}$ being its
edge-identifying code. Then we join each vertex of $P_{n'_i}$ to each
vertex of each $G'_j$ (with $j\neq i$ and $n'_j \geq 4$) and to exactly
one vertex of each $G_j$ with $n'_j=2$.  We note that the new graph 
still admits an edge-identifying code of size~$k$. However, it has more edges than $G$.
Indeed, while the number of edges connecting $G'_i$ and the $G'_j$'s
($j\neq i$) is not decreased, the number of edges in $G_i$ is
increased when we replace $G_i$ by $J_{n'_i-1}$.  This can be seen by
applying Theorem~\ref{Thm:conecLow} on $G_i$.

We now show that none of the $G'_i$'s can  have more than six vertices.
By contradiction, suppose $G'_1$ is a component with  $n'_1\geq 7$ vertices 
(thus $n'_1-1$ edges). We build a new graph $G_1^*$ from $G$ as follows. 
We take disjoint copies of $J_3 \in \mathcal J_3$ and
$J_{n'_1-4} \in \mathcal J_{n'_1-4}$ with $P_4$ and $P_{n'_1-3}$ being, respectively,
their edge-identifying codes.
We now let $V(G_1^*)= V(J_3)\cup V(J_{n'_1-4}) \cup (V(G)\setminus V(G_1))$.
The edges of $J_3$, $J_{n'_1-4}$ and $G-G_1$ are also edges of $G_1^*$.
We then add edges between these three parts as follows. We join every vertex of
$P_4$ to each vertex of $P_{n'_1-3}$. For $i=2, 3, \ldots, r$ if
$n'_i \geq 4$, join every vertex of $G'_i$ to each vertex of $P_4 \cup P_{n'_1-3}$.
If $n'_i=2$, we choose exactly one vertex of $G'_i$ and join it to each vertex of
$P_4 \cup P_{n'_1-3}$. The construction of $G^*_1$ ensures that it still admits an
edge-identifying code of size~$k$, but it has more edges than $G$. In fact, the number
of edges is increased in two ways. First, because $P_4 \cup P_{n'_1-3}$ has one more
vertex than $G'_1$, the number of edges connecting $P_4 \cup P_{n'_1-3}$ to $G-G_1$
has increased (unless $r=1$). More importantly, the number of edges induced by
$J_3 \cup J_{n'_1-4}$ is
$6+\binom{n'_1-2}{2}-4+ 4\times (n'_1-3)=\frac{{n'_1}^2}{2}+\frac{3n'_1}{2} -7$ which
is strictly more than $|E(G'_1)|=\frac{{n'_1}^2}{2}+\frac{n'_1}{2} -4$ for $n'_1\geq 3$.
Since $n'_1\geq 7$, this contradicts the maximality of $G$.

With a similar method, the following transformations strictly increase the number of edges while the new graph still admits an edge-identifying code of size~$k$:
\begin{enumerate}

\item Two components of $G'$ each on six vertices transform
into two graphs of $\mathcal{J}_3$ and a graph of $\mathcal J_4$.

\item One component  of $G'$ on six vertices and another component on five vertices
transform  into three graphs of $\mathcal J_3$.

\item One component of $G'$ on six vertices and one on two vertices transform
 into two graphs of $\mathcal J_3$.

\item Three components of $G'$ each on five vertices transform into four graphs of $\mathcal J_3$.

\item Two components of $G'$ on five vertices and one on two vertices transform into three 
graphs of $\mathcal J_3$.

\item A component of $G'$ on five vertices and two on two vertices transform into two graphs of $\mathcal J_4$.

\item Three components of $G'$ each isomorphic to $P_2$ transform into a graph
of $\mathcal J_3$.
\end{enumerate}


For the proof of case 7, we observe that the number of edges
identified by the three $P_2$'s would be the same as the number of
edges identified by the $P_4$. However, since $k\geq 4$, there must be
some other component in $G'$. Moreover, the number of vertices of the
three $P_2$'s, which are joined to the vertices of the other
components of $G'$, is three, whereas the number of these vertices of
the $P_4$, is four. Hence the maximality of $G$ is contradicted.

We note that cases~1, 2 and~3 imply that if a component of $G'$ is
isomorphic to $P_6$, every other component is isomorphic to
$P_4$. Then cases~4, 5 and~6 imply that if a component is isomorphic
to $P_5$, then at most one other component is not isomorphic to $P_4$
and such component is necessarily either a $P_2$ or a $P_5$. Finally,
case~7 shows that there can be at most two components both isomorphic
to $P_2$.

We conclude that each of the components of $G'$ is isomorphic to $P_4$ except for possibly two of them.
These exceptions are dependent on the value of $k \bmod 3$ as we described. 
The formulas of the theorem can be derived using these structural properties of $G$.
For instance, in the case $k \equiv 0 \bmod 3$, each component of $G'$ is
isomorphic to $P_4$. There are $\tfrac{k}{3}$ such components. For each
component $G'_i$, there are six edges in the graph $G_i$. That gives $2k$ edges.
The other edges of $G$ are edges between two components of $G'$. By maximality
of $G$, between two components of $G'$, there are exactly 16 edges. There are
$\displaystyle\binom{\frac{k}{3}}{2}$ pairs of components of $G'$. Hence, the
number of edges in $G$ is:

$$2k+16\displaystyle\binom{\frac{k}{3}}{2}=\displaystyle\binom{\frac{4}{3}k}{2}
.$$

The other cases can be proved with the same method. 
\end{proof}

We note that this bound is tight and the examples were in fact built inside the proof.
More precisely, for $k\equiv 0 \bmod 3$ we take $\frac {k}{3}$ disjoint copies of elements
of $\mathcal J_3$ each having a $P_4$ as an edge-identifying code. We then add an edge
between each pair of vertices coming from two distinct such $P_4$'s. We note that the union
of these
$P_4$'s is a minimum edge-identifying code of the graph.
If $k\not\equiv 0 \bmod 3$, then we build a similar construction. This time we use elements
from $\mathcal J_3$ with at most two exceptions that are elements of $\mathcal J_4$ or
$\mathcal J_5$.

The above theorem can be restated in the language of line graphs as follows.

\begin{corollary}\label{cor:LowerBoundSqrt}
Let $G$ be a twin-free line graph on $n\geq 4$ vertices. Then we have 
$\gamma^{\text{\tiny {ID}}}(G)\geq \frac{3\sqrt{2}}{4}\sqrt{n}$.
\end{corollary}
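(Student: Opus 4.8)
The plan is to translate the edge-statement of Theorem~\ref{Thm:Low1} into the line-graph language by setting $n=|E(G)|$ and $k=|\mathcal{C}_E|=\gamma^{\text{\tiny{ID}}}(\mathcal{L}(G))$, and then inverting the inequality $n\le f(k)$ to recover a bound of the form $k\ge g(n)$. Since $\mathcal{L}(G)$ is a twin-free line graph on $n$ vertices, it equals $\mathcal{L}(G)$ for some pendant-free $G$ with $|E(G)|=n$, and a minimum identifying code of $\mathcal{L}(G)$ is exactly a minimum edge-identifying code of $G$, so $\gamma^{\text{\tiny{ID}}}(\mathcal{L}(G))=\EID(G)=k$. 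Thus everything reduces to reading off the asymptotics of the three-case bound.

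First I would focus on the cleanest case $k\equiv 0\bmod 3$, where Theorem~\ref{Thm:Low1} gives $n\le \binom{(4/3)k}{2}=\tfrac{1}{2}\cdot\tfrac{4}{3}k\left(\tfrac{4}{3}k-1\right)$. The dominant term is $\tfrac{8}{9}k^2$, so $n\lesssim \tfrac{8}{9}k^2$ and hence $k\gtrsim \sqrt{9n/8}=\tfrac{3}{2\sqrt2}\sqrt n=\tfrac{3\sqrt2}{4}\sqrt n$, which is exactly the claimed constant. The point is that the binomial $\binom{(4/3)k}{2}$ behaves like $\tfrac{8}{9}k^2$, and the constant $\tfrac{3\sqrt2}{4}=\sqrt{9/8}$ is precisely $1/\sqrt{8/9}$.

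The main obstacle is handling the other two residues and the lower-order correction terms so that the inequality $k\ge \tfrac{3\sqrt2}{4}\sqrt n$ holds for all $k$, not just asymptotically. For $k\equiv 1$ the bound is $\binom{(4/3)(k-1)+1}{2}+1$ and for $k\equiv 2$ it is $\binom{(4/3)(k-2)+2}{2}+2$; in each case I would expand the binomial as a quadratic in $k$, verify that its leading coefficient is still $\tfrac{8}{9}$, and check that the additive $+1$ or $+2$ together with the linear terms only decreases $n$ relative to the bound $\tfrac{8}{9}k^2$ needed for the conclusion. A convenient uniform way to argue is to observe that in all three cases $n\le \binom{(4/3)k}{2}=\tfrac{8}{9}k^2-\tfrac{2}{3}k$, since the shifted arguments and the $+1,+2$ terms never exceed the unshifted binomial for $k\ge 4$; then $n\le \tfrac{8}{9}k^2$ gives the result directly. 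Verifying this domination case-by-case is the one genuinely fiddly step, but it is elementary.

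Finally I would note the hypothesis $n\ge 4$ guarantees $k\ge 4$ (the small cases $k\le 3$ from the proof of Theorem~\ref{Thm:Low1} correspond to $n\le 6$, where the inequality can be checked by hand), so the asymptotic estimate is valid throughout the stated range and the corollary follows.
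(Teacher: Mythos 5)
Your proposal is correct and takes essentially the same route as the paper: both reduce to Theorem~\ref{Thm:Low1} via the correspondence $\ID(\mathcal L(H))=\EID(H)$, $n=|E(H)|$, and then invert the quadratic bounds --- the paper by solving each of the three congruence cases exactly for $k$, you by majorizing all three right-hand sides by $\binom{4k/3}{2}=\tfrac{8}{9}k^2-\tfrac{2}{3}k\le\tfrac{8}{9}k^2$, which is a mild algebraic variant of the same argument (and your domination claim does hold, in fact already for $k\ge 3$). One small slip worth fixing: $n\ge 4$ forces only $k\ge 3$, not $k\ge 4$ (a code of size~$3$ can identify up to $6$ edges), but since your uniform bound is valid for $k\ge 3$ and $3\ge\tfrac{3\sqrt 2}{4}\sqrt{6}$ anyway, the conclusion is unaffected.
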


\begin{proof}
Suppose $G$ is the line graph of a pendant-free graph $H$
($\mathcal{L}(H)=G$). Let $k=\gamma^{\text{\tiny
    {ID}}}(G)=\gamma^{\text{\tiny {EID}}}(H)$, and let $n$ be the
number of vertices of $G$ ($n=|E(H)|$). Then, after solving the quadratic
inequalities of Theorem~\ref{Thm:Low1} for $k$, we have:
\begin{align*}
k\geq & \frac{3}{8}+\frac{3\sqrt{8n+1}}{8} \text{, for } k\equiv 0\bmod 3,\\
k\geq &\frac{5}{8}+\frac{3\sqrt{8n-7}}{8} \text{, for } k\equiv 1\bmod 3,\\
k\geq &\frac{3}{8}+\frac{3\sqrt{8n-15}}{8} \text{, for } k\equiv 2\bmod 3.\\
\end{align*}
It is then easy to check that the right-hand side of each of the three
inequalities is at least as $\frac{3\sqrt{2}}{4}\sqrt{n}$ for $n\geq 3$.
\end{proof}

{\bf Remark.} Note that the lower bound of $\ID(G) \geq \Theta
(\sqrt{|V(G)|})$, which holds for the class of line graphs, is also
implied by Theorem~\ref{prop:low2}. However, the bound of \ref{cor:LowerBoundSqrt} is more precise. 
In~\cite{B70}, Beineke characterized line
graphs by a list of nine forbidden induced subgraphs. Considering Beineke's
characterization, the lower bound of Corollary~\ref{cor:LowerBoundSqrt} can be
restated as follows: $\ID(G) \geq \Theta (\sqrt{|V(G)|})$ holds if $G$
has no induced subgraph from Beineke's list. It is then natural to ask what is a minimal list of forbidden induced subgraphs for which a similar claim would hold. Note that the
claw graph, $K_{1,3}$, belongs to Beineke's list of forbidden
subgraphs. However, we remark that the bound $\ID(G) \geq \Theta (\sqrt{|V(G)|})$ does not hold for the
class of claw-free graphs. Examples can be built as follows: let $A$
be a set of size~$k$ and let $B$ be the set of nonempty subsets of
$A$. Let $G$ be the graph built on $A\cup B$, where $A$ and $B$ each
induce a complete graph and a vertex $a$ of $A$ is joined to a vertex
$b$ of $B$ if $a\in b$. This graph is claw-free and it is easy to find
an identifying code of size at most $2k=\Theta (\log{|V(G)|})$ in $G$.

\section{Upper bounds}\label{UpperBound}

The most natural question in the study of identifying codes in graphs
is to find an identifying code as small as possible. A general bound,
only in terms of the number of vertices of a graph, is provided by
Theorem \ref{boundn-1}. Furthermore, the class of all graphs with
$\ID(G)=|V(G)|-1$ is classified in \cite{FGKNPV10}.  It is easy to
check that none but six of these graphs are line graphs. Thus we have
the following corollary (where $G\join H$ denotes the complete join of
graphs $G$ and $H$):

\begin{corollary}\label{cor:extremalgraphs}
If $G$ is a twin-free line graph with 
$G\notin \{P_3,P_4, C_4, P_4\join K_1, C_4\join K_1,\mathcal L(K_4)\}$, then we have
$\ID(G)\leq |V(G)|-2$.
\end{corollary}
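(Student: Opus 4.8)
The plan is to read off Corollary~\ref{cor:extremalgraphs} from Theorem~\ref{boundn-1} together with the classification of its extremal graphs obtained in~\cite{FGKNPV10}. By Theorem~\ref{boundn-1}, every twin-free graph with at least two edges already satisfies $\ID(G)\le|V(G)|-1$, and this is the base inequality we wish to strengthen; the handful of twin-free line graphs with fewer than two edges fall outside the scope of that bound, and I would treat them separately (or regard them as excluded by the standing conventions). The improvement to $\ID(G)\le|V(G)|-2$ can fail only when equality holds in Theorem~\ref{boundn-1}, i.e. when $G$ belongs to the extremal family $\mathcal{F}=\{H:\ID(H)=|V(H)|-1\}$. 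Hence the whole statement reduces to a single assertion: the only twin-free line graphs lying in $\mathcal{F}$ are exactly the six graphs listed.

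To prove this I would first import the explicit description of $\mathcal{F}$ from~\cite{FGKNPV10}. The task then becomes a finite inspection, running over the individual graphs and the infinite subfamilies appearing in that description and deciding in each case whether the graph is a line graph. The efficient tool here is Beineke's characterization~\cite{B70} of line graphs by nine forbidden induced subgraphs, the claw $K_{1,3}$ being the one that does most of the work. For every member of $\mathcal{F}$ other than the six exceptions I would exhibit a forbidden induced subgraph --- typically a claw --- which certifies that the graph is not a line graph; treating the infinite subfamilies of $\mathcal{F}$ uniformly, rather than graph by graph, is what keeps this verification short.

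For the six exceptions I would argue in the other direction and name a root graph for each: $P_3=\mathcal{L}(P_4)$, $P_4=\mathcal{L}(P_5)$, $C_4=\mathcal{L}(C_4)$, $C_4\join K_1=\mathcal{L}(K_4\setminus e)$, the graph $\mathcal{L}(K_4)$ is a line graph by definition, and $P_4\join K_1$ is the line graph of a triangle carrying one pendant edge at each of two of its vertices. Together with the previous paragraph this shows that precisely these six graphs of $\mathcal{F}$ are line graphs, so any twin-free line graph $G$ outside the list satisfies $G\notin\mathcal{F}$ and therefore $\ID(G)\le|V(G)|-2$ by Theorem~\ref{boundn-1}.

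The main obstacle I anticipate is the middle step: it presupposes having the exact classification of~\cite{FGKNPV10} in hand, and the real content is recognising a claw (or, failing that, another graph from Beineke's list) inside every extremal graph except the six --- this is precisely where an infinite subfamily could conceal a nonobvious line graph and where the ``easy to check'' actually has to be carried out. A secondary point requiring care is the low end: graphs with at most one edge, for which Theorem~\ref{boundn-1} does not apply, must be argued to be irrelevant to the statement.
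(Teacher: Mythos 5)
Your proposal follows exactly the paper's own route: the paper derives Corollary~\ref{cor:extremalgraphs} by combining Theorem~\ref{boundn-1} with the classification in~\cite{FGKNPV10} of all graphs attaining $\ID(G)=|V(G)|-1$, and then observing that ``it is easy to check that none but six of these graphs are line graphs.'' Your additional details --- using Beineke's forbidden-subgraph characterization to rule out the non-line-graph members of the extremal family, exhibiting explicit root graphs for the six exceptions, and flagging the degenerate case of graphs with fewer than two edges --- merely flesh out the verification that the paper leaves implicit, so the approach is essentially identical and correct.
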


Since $\EID(K_{2,n})=2n-2$, $\ID(\mathcal L
(K_{2,n}))=|V(\mathcal{L}(K_{2,n}))|-2$ and the bound of
Corollary~\ref{cor:extremalgraphs} is tight for an infinite family of
graphs. Conjecture~\ref{ConjFKKR} proposes a better bound in terms of
both the number of vertices and the maximum degree of a graph.  As
pointed out in Proposition~\ref{LineGraphofRegulars}, most of the known
extremal graphs for Conjecture~\ref{ConjFKKR} are line graphs. In this
section, after proving some general bound for the edge-identifying
code number of a pendant-free graph we will show that
Conjecture~\ref{ConjFKKR} holds for the class of line graphs of high
enough density.

We recall that a graph on $n$ vertices is \emph{2-degenerated} if its vertices can
be ordered $v_1, v_2, \ldots, v_n$ such that each vertex $v_i$ is joined to
at most two vertices in $\{v_1, v_2, \ldots, v_{i-1}\}$. Our main idea for
proving upper bounds is to show that
given a pendant-free graph $G$, any (inclusionwise) minimal edge-identifying
code $\mathcal{C}_E$ induces a 2-degenerated subgraph of $G$ and hence
$|\mathcal{C}_E|\leq 2|V(G)|-3$. Our proofs are constructive and one could build
such small edge-identifying codes.

\begin{theorem}\label{EIDare2Degenerated}
 Let $G$ be a pendant-free graph and let $\mathcal{C}_E$ be a minimal
 edge-identifying code of $G$.  Then $G'$, the subgraph induced by
 $\mathcal{C}_E$, is 2-degenerated.
\end{theorem}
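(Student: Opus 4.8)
The plan is to show directly that the subgraph $G'$ induced by a minimal edge-identifying code $\mathcal{C}_E$ admits a $2$-degeneracy ordering. The natural strategy is to exploit minimality: since $\mathcal{C}_E$ is inclusionwise minimal, for every edge $e\in\mathcal{C}_E$ the set $\mathcal{C}_E\setminus\{e\}$ fails to be an edge-identifying code, so $e$ is \emph{private} in one of two ways --- either removing $e$ leaves some edge of $G$ undominated, or removing $e$ leaves some pair of edges of $G$ unseparated. I would attach to each $e\in\mathcal{C}_E$ a \emph{witness}: an edge or a pair of edges of $G$ whose domination/separation relies solely on $e$. The aim is to charge each code edge to a small local structure in $G'$ so that, when I peel the code edges off in a suitable order, each one is incident to at most two previously-peeled vertices.

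Concretely, I would build the degeneracy ordering by repeatedly finding a vertex of $G'$ of degree at most $2$ (in the graph $G'$), deleting it, and recursing; equivalently, I want to show $G'$ has no subgraph of minimum degree $\geq 3$. So the core claim to establish is: every subgraph $H$ of $G'$ contains a vertex of degree at most $2$. Suppose not, and let $H$ be a subgraph with all degrees $\geq 3$. The idea is that a code edge $e=uv$ lying ``deep inside'' such a dense region cannot be private: if both $u$ and $v$ have three or more code-neighbours in $G'$, then every edge $f$ that $e$ might be needed to dominate or separate is in fact dominated or separated by some other code edge incident to $u$ or $v$. Translating the separation and domination conditions into statements about the neighbourhoods $\mathcal{C}_E^u,\mathcal{C}_E^v$ (using the notation and local analysis already developed in the proof of Theorem~\ref{Thm:conecLow}) should let me derive that $e$ is redundant, contradicting minimality.

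The main obstacle, and where the real work lies, is the careful case analysis of exactly which edges of $G$ an edge $e=uv$ can be uniquely responsible for. I would organise it by the types of the two endpoints of the threatened edge $f=xy$: whether $x,y$ lie in $V(G')$ or outside, and how they relate to $u$ and $v$. For the domination role, if $e$ is the only code edge dominating some $f$, then $f$ must be incident to $u$ or $v$ and no other code edge touches that endpoint --- but a vertex of degree $\geq 3$ in $H\subseteq G'$ already supplies other code edges, so this is where density is used to rule domination out. The separation role is more delicate: $e$ separates $f$ and $g$ precisely when $e$ is adjacent to exactly one of them, so I must argue that whenever $u,v$ each have enough other code edges, any such pair $f,g$ is already separated by one of those other edges. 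Handling the small components and the boundary cases (code edges both of whose endpoints have low code-degree) is routine once the dense-core argument is in place, since those edges are exactly the ones peeled off first in the ordering.

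Once the no-dense-subgraph claim is proved, the conclusion is immediate: a graph in which every subgraph has a vertex of degree at most $2$ is $2$-degenerated by definition, so $G'$ is $2$-degenerated, which is what we want. I expect the bound $|\mathcal{C}_E|\leq 2|V(G)|-3$ to then follow from $2$-degeneracy in the subsequent corollary, so the present theorem needs only the structural degeneracy statement.
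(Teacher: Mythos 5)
Your overall architecture (exploit inclusionwise minimality to attach a private domination or separation witness to each code edge, then peel off low-degree vertices) matches the paper's, but the pivotal claim on which your contradiction rests is false. You assert that if both endpoints of a code edge $e=uv$ have three or more code-neighbours in $G'$, then $e$ is redundant because ``every edge $f$ that $e$ might be needed to dominate or separate is in fact dominated or separated by some other code edge incident to $u$ or $v$.'' This fails already for $G=K_4$: a minimum (hence minimal) edge-identifying code of $K_4$ has size $\EID(K_4)=5$ and induces $K_4^-$, in which the edge $uv$ joining the two degree-3 vertices has both endpoints of $G'$-degree~3, yet it is the \emph{unique} code edge separating $uv$ itself from the edge $ab$ of $G$ joining the two degree-2 vertices: both edges are adjacent to all four of the remaining code edges, so $N[uv]\cap(\mathcal{C}_E\setminus\{uv\})=N[ab]\cap(\mathcal{C}_E\setminus\{uv\})$. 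Your sketch misses exactly the two delicate separation scenarios: (i) $e_1=uv$ itself must be separated from an edge $e_2$ adjacent to every other code edge at $u$ and $v$ (the $K_4^-$ configuration of Figure~\ref{fig:2degeneratedC1}); and (ii) $e_1$ incident to $u$ and $e_2=xy$ joining two other $G'$-neighbours $x,y$ of $u$, in which case the code edges $ux,uy$ are adjacent to \emph{both} $e_1$ and $e_2$ and therefore separate nothing — ``enough other code edges at $u$'' does not help here.

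What the paper's proof actually does is not derive a contradiction from a degree-3 pair, but classify the situation: if $uv$ is essential and $d_{G'}(u),d_{G'}(v)\geq 3$, the local structure is forced to be one of the tight configurations of Figure~\ref{fig:2degenerated} — a whole component isomorphic to $K_4^-$, or $d_{G'}(u)=3$ with $u$ on a $C_4$ whose other vertices have $G'$-degree~2, or $u$ adjacent to a degree-1 vertex, or $u$ on a triangle with a degree-2 vertex. Each configuration supplies a vertex of degree at most~2 whose deletion drops $u$ or $v$ to degree at most~2, and the 2-degeneracy ordering is assembled from these. Your reformulation via ``no subgraph of minimum degree $\geq 3$'' is equivalent to 2-degeneracy and \emph{would} follow from this classification (in every configuration the degree-3 endpoint has a $G'$-neighbour of degree at most~2, which could not survive in a subgraph of minimum degree~3), but it cannot be reached by your redundancy shortcut: essential code edges with both endpoints of degree~3 genuinely exist, and the case analysis that tames them is the substance of the theorem, not the ``routine'' boundary work your proposal defers.
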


\begin{proof}
 Let $uv$ be an edge of $G'$ with $d_{G'}(u), d_{G'}(v)\geq 3$. By minimality of $\mathcal{C}_E$
the subset $\mathcal C'=\mathcal{C}_E-uv$ of $E(G)$ is not an edge-identifying code of $G$. By the choice of 
$uv$, $\mathcal C'$ is still an edge-dominating set, thus there must be two edges, $e_1$ and $e_2$,
that are not separated by $\mathcal C'$. Hence one of them, say $e_1$, is incident either to $u$
or to $v$ (possibly to both) and the other one ($e_2$) is incident to neither one.

We consider two cases: either $e_1=uv$ or $e_1$ is incident  to only one
of $u$ and $v$. In the first case, $e_2$ is adjacent to every edge of $\mathcal{C}'$ which $uv$ is
adjacent to. Since for each vertex of $uv$ there are at least two edges in $\mathcal{C}'$ incident to
this vertex, the subgraph induced by $u$, $v$ and the vertices of $e_2$ must be isomorphic
to $K_4$ and there should be no other edge of $\mathcal{C}'$ incident to any
vertex of this $K_4$ (see Figure~\ref{fig:2degeneratedC1}).

In the other case, suppose $e_1$ is adjacent to $uv$ at $u$. Let $x$
and $y$ be two neighbours of $u$ in $G'$ other than $v$. Then it
follows that $e_2=xy$ and, therefore, $d_{G'}(u)=3$. Let $z$ be the
other end of $e_1$. We consider two subcases: either $z\notin
\{x,y\}$, or, without loss of generality, $z=x$. Suppose $z\notin
\{x,y\}$.  Recall that $uv$ is the only edge separating $e_1$ and
$e_2$, but $e_1$ must be separated from $ux$. Thus $zy\in
\mathcal{C}_E$. Similarly, $e_1$ must be separated from
$uy$, so $zx\in\mathcal{C}_E$. Furthermore,
$d_{G'}(x)=d_{G'}(y)=d_{G'}(z)=2$ and $\{x,y,z, u\}$ induces a $C_4$
in $G'$ (see Figure~\ref{fig:2degeneratedC2a}).  Now suppose $e_1=ux$,
since $uv$ is the only edge separating $e_1$ and $e_2$, then $uy$ and
possibly $xy$ are the only edges in $G'$ incident to $y$, so
$d_{G'}(y)\leq 2$ and $d_{G'}(u)=3$ (see
Figures~\ref{fig:2degeneratedC2b} and~\ref{fig:2degeneratedC2c}).

\begin{figure}[!ht]
\begin{center}
\subfigure[]{
\begin{tikzpicture}[join=bevel,inner sep=0.5mm,scale=0.7]
\node[draw,shape=circle,fill=black,draw=black,minimum size=0.6pt,inner sep=1.5pt] (a) at (1.5,3){};
\node[draw,shape=circle,fill=black,draw=black,minimum size=0.6pt,inner sep=1.5pt] (b) at (1.5,1){};
\node[draw,shape=circle,fill=black,draw=black,minimum size=0.6pt,inner sep=1.5pt] (u) at (0,0){};
\node at (-0.35,0) {$u$};
\node[draw,shape=circle,fill=black,draw=black,minimum size=0.6pt,inner sep=1.5pt] (v) at (3,0){};
\node at (3.3,0) {$v$};
\draw[-,line width=2pt] (a)--(u)--(b)--(v)--(a) (u) -- (v);
\draw[-] (b) -- (a);
\node[shape=circle,fill=white] at (1.5,1.8) {$e_2$};
\node[shape=circle,fill=white] at (1.5,0) {$e_1$};
\end{tikzpicture}
\label{fig:2degeneratedC1}
}
\subfigure[]{
\begin{tikzpicture}[join=bevel,inner sep=0.5mm,scale=0.7]
\node[draw,shape=circle,fill=black,draw=black,minimum size=0.6pt,inner sep=1.5pt] (u) at (1.5,3){};
\node at (1.5,3.3) {$u$};
\node[draw,shape=circle,fill=black,draw=black,minimum size=0.6pt,inner sep=1.5pt] (x) at (1.5,1){};
\node at (1.5,0.7) {$x$};
\node[draw,shape=circle,fill=black,draw=black,minimum size=0.6pt,inner sep=1.5pt] (y) at (0,0){};
\node at (-0.35,0) {$y$};
\node[draw,shape=circle,fill=black,draw=black,minimum size=0.6pt,inner sep=1.5pt] (z) at (3,0){};
\node at (3.3,0) {$z$};
\node[draw,shape=circle,draw=black,minimum size=0.6pt,inner sep=1.5pt] (v) at (3,3){};
\node at (3,3.3) {$v$};

\draw[-,line width=2pt] (4.2,2.5) -- (v) -- (4.2,3.5);
\node[rotate=90,scale=0.7] at (4,3) {$\cdots$};

\draw[-,line width=2pt] (u)--(y)--(z)--(x)--(u)--(v);
\draw[-] (y)--(x) (u)--(z);
\node[shape=circle,fill=white] at (0.8,0.5) {$e_2$};
\node[shape=circle,minimum size=15pt] at (1.5,0) {};
\node[shape=circle,fill=white] at (2.25,1.5) {$e_1$};
\end{tikzpicture}
\label{fig:2degeneratedC2a}
}
\subfigure[]{
\begin{tikzpicture}[join=bevel,inner sep=0.5mm,scale=0.7]
\node[draw,shape=circle,fill=black,draw=black,minimum size=0.6pt,inner sep=1.5pt] (u) at (1.5,3){};
\node at (1.5,3.3) {$u$};
\node[draw,shape=circle,fill=black,draw=black,minimum size=0.6pt,inner sep=1.5pt] (y) at (0,0){};
\node at (-0.35,0) {$y$};
\node[draw,shape=circle,draw=black,minimum size=0.6pt,inner sep=1.5pt] (z) at (3,0){};
\node at (3.8,0) {$z=x$};

\node[draw,shape=circle,draw=black,minimum size=0.6pt,inner sep=1.5pt] (v) at (3,3){};
\node at (3,3.3) {$v$};

\draw[-,line width=2pt] (4.2,2.5) -- (v) -- (4.2,3.5);
\node[rotate=90,scale=0.7] at (4,3) {$\cdots$};

\draw[-,line width=2pt] (y)--(u)--(z) (u)--(v);
\draw[-] (y)--(z);
\node[shape=circle,fill=white] at (1.5,0) {$e_2$};
\node[shape=circle,fill=white] at (2.25,1.45) {$e_1$};
\end{tikzpicture}
\label{fig:2degeneratedC2b}
}
\subfigure[]{
\begin{tikzpicture}[join=bevel,inner sep=0.5mm,scale=0.7]
\node[draw,shape=circle,fill=black,draw=black,minimum size=0.6pt,inner sep=1.5pt] (u) at (1.5,3){};
\node at (1.5,3.3) {$u$};
\node[draw,shape=circle,fill=black,draw=black,minimum size=0.6pt,inner sep=1.5pt] (y) at (0,0){};
\node at (-0.35,0) {$y$};
\node[draw,shape=circle,draw=black,minimum size=0.6pt,inner sep=1.5pt] (z) at (3,0){};
\node at (3.8,0) {$z=x$};

\node[draw,shape=circle,draw=black,minimum size=0.6pt,inner sep=1.5pt] (v) at (3,3){};
\node at (3,3.3) {$v$};

\draw[-,line width=2pt] (4.2,2.5) -- (v) -- (4.2,3.5);
\node[rotate=90,scale=0.7] at (4,3) {$\cdots$};

\draw[-,line width=2pt] (z)--(y)--(u)--(z) (u)--(v);
\node[shape=circle,fill=white] at (1.5,0) {$e_2$};
\node[shape=circle,fill=white] at (2.25,1.45) {$e_1$};
\end{tikzpicture}
\label{fig:2degeneratedC2c}
}
\caption{\label{fig:2degenerated} Case distinctions in the proof of
Theorem~\ref{EIDare2Degenerated}. Black vertices have fixed degree in $G'$.
Thick edges belong to $\mathcal{C}_E$.}
\end{center}
\end{figure}
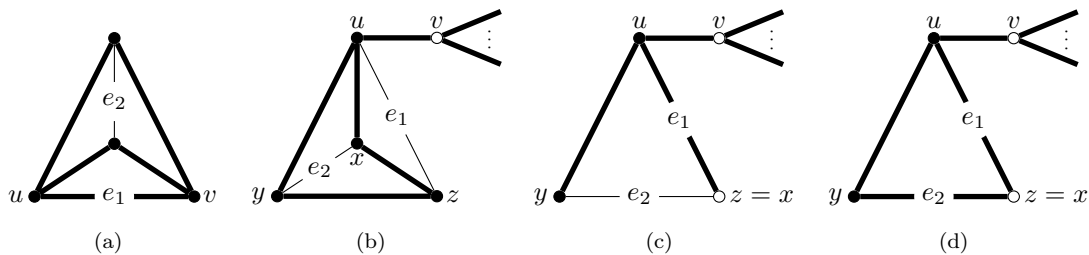

To summarize, we proved that given an edge $uv$, in a minimal edge-identifying code $\mathcal{C}_E$,
we have one of the following cases.
\begin{itemize}
\item One of $u$ or $v$ is of degree at most~2 in $G'$.
\item Edge $uv$ is an edge of a connected component of $G'$ isomorphic to $K_4^-$
(that is $K_4$ with an edge removed), see Figure~\ref{fig:2degeneratedC1}.
\item $d_{G'}(u)=3$ (considering the symmetry between $u$ and $v$) in which case either $u$ is incident to a $C_4$ whose other vertices are of degree~2 in $G'$ (Figure~\ref{fig:2degeneratedC2a}), or to a vertex of degree
1 in $G'$ (Figure~\ref{fig:2degeneratedC2b}) or to a triangle with one vertex $y$ of degree~2 in $G'$ and $y$ is not adjacent to $v$ (Figure~\ref{fig:2degeneratedC2c}). 
\end{itemize}

In either case, there exists a vertex $x$ of degree at most~2 in $G'$
such that when $x$ is removed, at least one of the vertices $u$, $v$ has degree at
most~2 in the remaining subgraph of $G'$. In this way we can define an order
of elimination of the vertices of $G'$ showing that $G'$ is 2-degenerated.
\end{proof}

By further analysis of our proof we prove the following:

\begin{corollary}\label{Bound2|V(G)|-5}
 If $G$ is a pendant-free graph on $n$ vertices not isomorphic to $K_4$ or $K_4^-$,
 then $\EID (G)\leq 2n-5$.  
\end{corollary}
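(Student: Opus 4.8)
The plan is to leverage Theorem~\ref{EIDare2Degenerated}, which tells us that a minimal edge-identifying code $\mathcal{C}_E$ induces a $2$-degenerated subgraph $G'$ of $G$. A $2$-degenerated graph on $m$ vertices has at most $2m-3$ edges (since the first two vertices in the degeneracy order contribute at most $1$ edge between them, and each subsequent vertex contributes at most $2$). Applying this with $m=|V(G')|\le n$ gives the weak bound $\EID(G)\le 2n-3$. To gain the extra $2$ and reach $2n-5$, I would refine this count by a more careful bookkeeping of the degeneracy elimination order produced in the proof of Theorem~\ref{EIDare2Degenerated}, exploiting the fact that several vertices in that order contribute \emph{strictly fewer} than two edges.

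First I would take a minimal edge-identifying code $\mathcal{C}_E$ (any inclusionwise-minimal one exists), set $G'$ to be its induced subgraph, and let $m=|V(G')|$. The key structural input is the case analysis summarized at the end of Theorem~\ref{EIDare2Degenerated}: whenever we process an edge $uv$ with both endpoints of degree $\ge 3$, the local picture is one of the rigid configurations ($K_4^-$, or the $C_4$/pendant/triangle attachments around a degree-$3$ vertex). In each such configuration there is a vertex $x$ of degree at most $2$ whose removal drops one of $u,v$ to degree at most $2$. This yields an elimination order $v_1,\dots,v_m$ of $V(G')$ in which each $v_i$ has at most two neighbours among $\{v_1,\dots,v_{i-1}\}$. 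The sharper observation I would make is that at the \emph{end} of the process (the first vertices to be eliminated, i.e. the last in the order read the other way), the rigid configurations force low-degree vertices: a pendant vertex contributes only one edge, and in the $K_4^-$ and $C_4$ components the initial vertices of degree $2$ contribute few back-edges. I would argue that one can always choose the order so that at least \emph{two} vertices beyond the first contribute at most one back-edge each, or equivalently that the total edge count is at most $2m-3-2=2m-5$, unless $G'$ is one of the forbidden exceptional components.

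The main obstacle will be the tight control of the low-degree vertices and the handling of the exceptional cases. The bound $2n-5$ is attained precisely when $G'$ is dense, and the excluded graphs $K_4$ and $K_4^-$ are exactly where the refined count fails: for $K_4^-$ as a \emph{component} of $G'$ we saw in Theorem~\ref{EIDare2Degenerated} that $uv$ can be forced to sit inside a $K_4^-$, and $K_4^-$ on four vertices has $5=2\cdot 4-3$ edges, not $2\cdot 4-5=3$, so such a component loses the savings. Thus I expect the hard part to be a careful decomposition of $G'$ into its connected components, bounding each component's edges by $2m_i-3$ via degeneracy and then arguing that the inter-component and global structure recovers two additional units of slack whenever $G\not\cong K_4,K_4^-$. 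Concretely I would show that if no component is $K_4^-$ and the graph is nontrivial, then either some component is a tree (giving $m_i-1\le 2m_i-5$ worth of savings for $m_i\ge 4$, via Lemma~\ref{degree2inPendantFree}) or the first two vertices in each component's elimination order jointly contribute at most one edge, accumulating the required deficit of $2$.

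Finally I would assemble the count: writing $|\mathcal{C}_E|=\sum_i (\text{edges of component } i)$ and summing the per-component degeneracy bounds with the extracted savings, I obtain $|\mathcal{C}_E|\le 2m-5\le 2n-5$. Since $\EID(G)\le |\mathcal{C}_E|$ for this minimal code, the corollary follows. The delicate point, worth isolating as a sub-claim, is verifying that excluding $K_4$ and $K_4^-$ is both necessary and sufficient to guarantee the two units of savings; I would check the small dense graphs directly to confirm these are the only genuine exceptions.
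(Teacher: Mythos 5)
Your starting point matches the paper's: take a minimal code $\mathcal{C}_E$, let $G'$ be the induced subgraph, invoke Theorem~\ref{EIDare2Degenerated} to get $2$-degeneracy and hence the weak bound $2n-3$. But the core of your plan --- that one can always \emph{choose} the elimination order so that two extra vertices contribute at most one back-edge each, ``accumulating the required deficit of $2$'' --- is precisely the hard part, and it is not true as a pure counting claim about $2$-degenerated graphs. There exist pendant-free, $2$-degenerated configurations on $m$ vertices with exactly $2m-4$ edges that survive every degeneracy bookkeeping argument: take the graph of Figure~\ref{fig:graphe5}(v) and attach each further vertex to both $u$ and $v$. The paper cannot and does not eliminate this case by counting. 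Instead it shows (in the final case of its proof) that if $G''$ is the configuration of Figure~\ref{fig:graphe5}(v), then every later vertex of the elimination order is adjacent in $G'$ only to $u$ and $v$, so $G'$ is spanning, and then $\mathcal{C}_E\setminus\{xu,xv\}$ is \emph{still} an edge-identifying code --- contradicting minimality of $\mathcal{C}_E$. That is an identification-theoretic step, not a degeneracy step, and it is absent from your proposal; without it your plan stalls at $2n-4$.

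Two further structural points. First, the disconnected case is actually trivial: if $G'$ has $s\geq 2$ components, summing $2m_i-3$ per component already gives $\leq 2m-3s\leq 2n-6$, so all the difficulty lives in the connected case, which your component-decomposition paragraph does not address. Second, the paper's mechanism for squeezing out each unit below $2n-3$ is not a refined choice of order but a repeated \emph{re-application} of the local case analysis of Theorem~\ref{EIDare2Degenerated} to specific edges $uv$ of $G'$ with both endpoints of degree at least~$3$: if $|E(G')|$ were $2n-3$ (resp.\ $2n-4$), the first five eliminated vertices would induce one of the rigid graphs of Figure~\ref{fig:degen3} (resp.\ Figure~\ref{fig:graphe5}), and in all but case~(v) none of these matches the allowed configurations of Figure~\ref{fig:2degenerated}, a contradiction; case~(v) is the non-minimality argument above, and case~(iv) needs a separate analysis involving $v_6$. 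You also slightly misplace the role of the exceptions: $K_4$ and $K_4^-$ are excluded as the graph $G$ itself (the paper first proves $\EID(G)\leq 2n-4$ for $G\ncong K_4$, then shows equality forces $G\cong K_4^-$), not merely as components of $G'$ that ``lose the savings.'' So while your intuition about where the bound is tight is sound, the proposal as written has a genuine gap at its central claimed sub-step.
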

\begin{proof}
We first prove that if $G$ is a pendant-free graph on $n$ vertices not
isomorphic to $K_4$, then $\EID (G)\leq 2n-4$. Let $\mathcal{C}_E$ be
a minimal edge-identifying code and let $G'$ be the subgraph induced
by $\mathcal{C}_E$. Then, by Theorem~\ref{EIDare2Degenerated}, $G'$ is
2-degenerated. Let $v_n, v_{n-1},\ldots, v_1$ be a sequence of
vertices of $G'$ obtained by a process of eliminating vertices of
degree at most~2. Since $v_1$ and $v_2$ can induce at most a $K_2$, we
notice that there could only be at most $2n-3$ edges in
$G'$. Furthermore, if there are exactly $2n-3$ edges in $G'$, then
$v_1v_2\in\mathcal{C}_E$ and each vertex $v_i$, $3\leq i\leq n$, has exactly
two neighbours in $\{v_1,\ldots,v_{i-1}\}$. Hence, the subgraph
induced by $\{v_1,v_2,v_3,v_4\}$ is isomorphic to $K_4^-$. Considering
symmetries, there are three possibilities for the subgraph induced by
$\{v_1,\ldots, v_5\}$ (recall that $v_5$ is of degree~2 in this subgraph): see
Figure~\ref{fig:degen3} . In each of these
three cases, the edge $uv$ has both ends of degree at least~3. Thus,
we can apply the argument used in the proof of
Theorem~\ref{EIDare2Degenerated} on $G'$ and $uv$, showing that we
have one of the four configurations of
Figure~\ref{fig:2degenerated}. But none of them matches with the
configurations of Figure~\ref{fig:degen3}, a contradiction.

\begin{figure}[!ht]
 \begin{center}
\begin{tikzpicture}[join=bevel,inner sep=0.5mm]
 \node[draw,shape=circle,draw=black,minimum size=0.6pt,inner sep=1.5pt](1) at
(-0.5,0) {};
  \node[draw,shape=circle,draw=black,minimum size=0.6pt,inner sep=1.5pt](2) at
(1.5,0) {};
   \node[draw,shape=circle,draw=black,minimum size=0.6pt,inner sep=1.5pt](3) at
(0.5,-0.4) {};
    \node[draw,shape=circle,draw=black,minimum size=0.6pt,inner sep=1.5pt](4) at
(0.5,0.4) {};
      \node[] at (-0.8,0) {$u$};
      \node[] at (1.8,0) {$v$};
     \node[draw,shape=circle,draw=black,minimum size=0.6pt,inner sep=1.5pt](5)
at (0.5,1.3) {};
 \draw[line width=2pt] (4)--(1)--(3)--(2)--(4) (1)--(2);
\draw[line width=2pt] (1)--(5)--(2);
 \end{tikzpicture}
\hfil
 \begin{tikzpicture}
 \node[draw,shape=circle,draw=black,minimum size=0.6pt,inner sep=1.5pt](1) at
(0,0) {};
  \node[draw,shape=circle,draw=black,minimum size=0.6pt,inner sep=1.5pt](2) at
(1,0) {};
   \node[draw,shape=circle,draw=black,minimum size=0.6pt,inner sep=1.5pt](3) at
(1,1) {};
    \node[draw,shape=circle,draw=black,minimum size=0.6pt,inner sep=1.5pt](4) at
(0,1) {};
     \node[draw,shape=circle,draw=black,minimum size=0.6pt,inner sep=1.5pt](5)
at (0.5,1.8) {};
 \node[] at (-0.3,1) {$u$};
      \node[] at (1.3,1) {$v$};
 \draw[line width=2pt] (4)--(1)--(2)--(3)--(5)--(4)--(3)--(1);
 \end{tikzpicture}
\hfil
  \begin{tikzpicture}
 \node[draw,shape=circle,draw=black,minimum size=0.6pt,inner sep=1.5pt](1) at
(-0.5,0) {};
  \node[draw,shape=circle,draw=black,minimum size=0.6pt,inner sep=1.5pt](2) at
(1.5,0) {};
   \node[draw,shape=circle,draw=black,minimum size=0.6pt,inner sep=1.5pt](3) at
(0.5,-0.4) {};
    \node[draw,shape=circle,draw=black,minimum size=0.6pt,inner sep=1.5pt](4) at
(0.5,0.4) {};
    \node[] at (0.5,0.7) {$u$};
      \node[inner sep=0] at (0.5,-0.7) {$v$};
     \node[draw,shape=circle,draw=black,minimum size=0.6pt,inner sep=1.5pt](5)
at (0.5,1.3) {};
 \draw[line width=2pt] (4)--(1)--(3)--(2)--(4)--(3);
\draw[line width=2pt] (1)--(5)--(2);
 \end{tikzpicture}
 \caption{\label{fig:degen3} The three maximal $2$-degenerated graphs on five
vertices}
 \end{center}
 \end{figure}
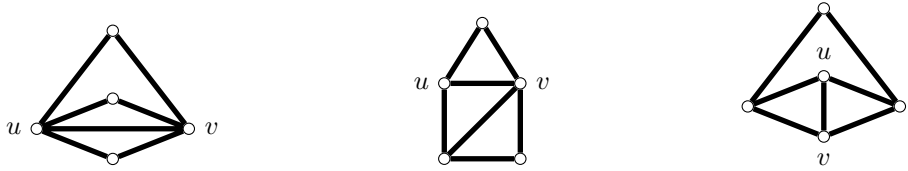

Now we show that if $\EID(G)=2n-4$, then $G\cong K^-_4$. This can be
easily checked if $G$ has at most four vertices, so we may assume
$n\geq 5$.  Let $G''$ be the subgraph of $G'$ induced by
$\{v_1,v_2,v_3,v_4, v_5\}$.  If $G''$ has seven edges, then it is isomorphic to
one of the graphs of Figure~\ref{fig:degen3}, and we are done just like in the
last case. Therefore, we can assume that $G''$ has exactly six edges
and, since it is 2-degenerated, by an easy case analysis, it must be
isomorphic to one of the graphs of Figure~\ref{fig:graphe5}.

 \begin{figure}[!ht]
 \begin{center}
 \begin{tikzpicture}
 \node[draw,shape=circle,draw=black,minimum size=0.6pt,inner sep=1.5pt](1) at
(0,0) {};
  \node[draw,shape=circle,draw=black,minimum size=0.6pt,inner sep=1.5pt](2) at
(1,0) {};
   \node[draw,shape=circle,draw=black,minimum size=0.6pt,inner sep=1.5pt](3) at
(1,1) {};
    \node[draw,shape=circle,draw=black,minimum size=0.6pt,inner sep=1.5pt](4) at
(0,1) {};
     \node[draw,shape=circle,draw=black,minimum size=0.6pt,inner sep=1.5pt](5)
at (0.5,1.8) {};
 \node[] at (-0.3,1) {$u$};
      \node[] at (1.3,1) {$v$};
 \draw[-,line width=2pt] (4)--(1)--(2)--(3)--(5)--(4)--(3);
 
 \node at (0.5,-0.5) {\footnotesize (i)};
 \node at (2,0) {};
 \end{tikzpicture}
  \begin{tikzpicture}
 \node[draw,shape=circle,draw=black,minimum size=0.6pt,inner sep=1.5pt](1) at
(0,0) {};
  \node[draw,shape=circle,draw=black,minimum size=0.6pt,inner sep=1.5pt](2) at
(1,0) {};
   \node[draw,shape=circle,draw=black,minimum size=0.6pt,inner sep=1.5pt](3) at
(1,1) {};
    \node[draw,shape=circle,draw=black,minimum size=0.6pt,inner sep=1.5pt](4) at
(0,1) {};
      \node[] at (-0.3,0) {$u$};
      \node[] at (1.3,1) {$v$};
     \node[draw,shape=circle,draw=black,minimum size=0.6pt,inner sep=1.5pt](5)
at (0.5,1.8) {};
 \draw[-,line width=2pt] (4)--(1)--(2)--(3)--(4);
 \draw[-,line width=2pt] (3)--(1);
  \draw[-,line width=2pt] (5)--(4);
 
 \node at (0.5,-0.5) {\footnotesize (ii)};
 \node at (2,0) {};
 \end{tikzpicture}
  \begin{tikzpicture}
  \node[draw,shape=circle,draw=black,minimum size=0.6pt,inner sep=1.5pt](1) at
(0,0) {};
  \node[draw,shape=circle,draw=black,minimum size=0.6pt,inner sep=1.5pt](2) at
(1,0) {};
   \node[draw,shape=circle,draw=black,minimum size=0.6pt,inner sep=1.5pt](3) at
(1,1) {};
    \node[draw,shape=circle,draw=black,minimum size=0.6pt,inner sep=1.5pt](4) at
(0,1) {};
     \node[draw,shape=circle,draw=black,minimum size=0.6pt,inner sep=1.5pt](5)
at (0.5,1.8) {};
  
  \node[] at (-0.3,0) {$u$};
      \node[] at (1.3,1) {$v$};
      
  \draw[-,line width=2pt] (4)--(1)--(2)--(3)--(4);
 \draw[-,line width=2pt] (3)--(1);
  \draw[-,line width=2pt] (5)--(3);

 \node at (0.5,-0.5) {\footnotesize (iii)};
 \node at (2,0) {};
 \end{tikzpicture}
  \begin{tikzpicture}
 \node[draw,shape=circle,draw=black,minimum size=0.6pt,inner sep=1.5pt](1) at
(0,0) {};
  \node[draw,shape=circle,draw=black,minimum size=0.6pt,inner sep=1.5pt](2) at
(1,0) {};
   \node[draw,shape=circle,draw=black,minimum size=0.6pt,inner sep=1.5pt](3) at
(1,1) {};

    \node[draw,shape=circle,draw=black,minimum size=0.6pt,inner sep=1.5pt](4) at
(0,1) {};
     \node[draw,shape=circle,draw=black,minimum size=0.6pt,inner sep=1.5pt](5)
at (0.5,1.8) {};
 \node[] at (0.5,2.1) {$v'$};

 \node[] at (-0.3,1) {$v$};
      \node[] at (1.3,1) {$u$};
 \draw[-,line width=2pt] (4)--(5)--(3)--(4);
 \draw[-,line width=2pt] (4)--(1)--(2)--(4);
 
 \node at (0.5,-0.5) {\footnotesize (iv)};
 \node at (2,0) {};
 \end{tikzpicture}
   \begin{tikzpicture}
 \node[draw,shape=circle,draw=black,minimum size=0.6pt,inner sep=1.5pt](1) at
(0,0) {};
  \node[draw,shape=circle,draw=black,minimum size=0.6pt,inner sep=1.5pt](2) at
(1,0) {};
   \node[draw,shape=circle,draw=black,minimum size=0.6pt,inner sep=1.5pt](3) at
(1,1) {};
    \node[draw,shape=circle,draw=black,minimum size=0.6pt,inner sep=1.5pt](4) at
(0,1) {};
     \node[draw,shape=circle,draw=black,minimum size=0.6pt,inner sep=1.5pt](5)
at (0.5,0.5) {};
 \draw[-,line width=2pt] (1)--(5)--(3);
 \draw[-,line width=2pt] (4)--(1)--(2)--(3)--(4);
 \node[] at (-0.3, 1) {$t$};
  \node[] at (-0.3,0) {$u$};
      \node[] at (1.3,1) {$v$};
      
 \node at (0.5,-0.5) {\footnotesize (v)};
 \node at (2,0) {};
 \end{tikzpicture}
 \caption{\label{fig:graphe5} The five possibilities of $2$-degenerated graphs
on five vertices with six edges}
 \end{center}
 \end{figure}
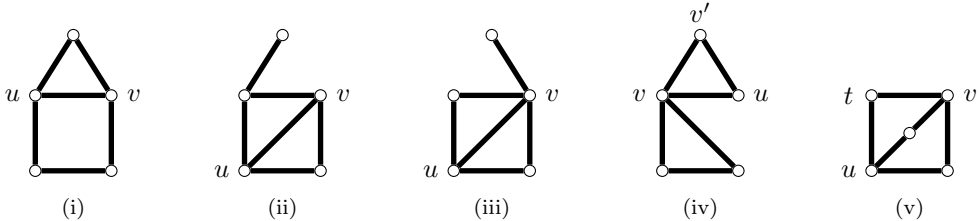

If $G''$ is a graph in part (i), (ii) or (iii) of
Figure~\ref{fig:graphe5}, then again one could repeat the arguments of
the proof of Theorem~\ref{EIDare2Degenerated} with $G'$ and the edge
$uv$ of the corresponding figure, to obtain a contradiction.

Suppose $G''$ is isomorphic to the graph of Figure~\ref{fig:graphe5}(iv).
Since $G''$ is not pendant-free, there must be at least one more vertex in $G'$.
Let $v_6$ be as in the sequence obtained by the 2-degeneracy of $G'$. Since $G'$
has exactly $2n-4$ edges, $v_6$ must have exactly two neighbours in $G''$. By
the symmetry of the four vertices of degree 2 in $G''$, we may assume $uv_6\in
\mathcal{C}_E$. Then $u$ and $v$ are both of degree at least 3 in $G'$.
Therefore, we could again repeat the argument of
Theorem~\ref{EIDare2Degenerated} with $G'$ and $uv$, where only one of  the
configurations of this theorem, namely \ref{fig:2degeneratedC2c}, matches
$G''$. Furthermore, if this happens then $v'v_6$ should also be an edge of
$G'$. Now $u$ and $v'$ are both of degree at least 3 and we apply the argument
of Theorem~\ref{EIDare2Degenerated} with $G'$ and $uv'$ to obtain a
contradiction.

Finally, let $G''$ be isomorphic to the graph of Figure~\ref{fig:graphe5}(v).
We claim that every other vertex $v_i$ ($i\geq 6$) is adjacent, in $G'$, only
to $u$ and $v$. By contradiction suppose $v_6$ is adjacent to $t$. Then
using the technique of Theorem~\ref{EIDare2Degenerated} applied on $G'$ and
$tu$ (respectively $tv$), we conclude that $v_6$ is adjacent to $u$
(respectively $v$).

Since $|E(G')|=|\mathcal{C}_{E}|=2n-4$, $G'$ is a spanning subgraph of $G$. But
then it is easy to verify that $\mathcal{C}_E\setminus\{xu,xv\}$ is an
edge-identifying code of $G$ --- a contradiction.
\end{proof}

We note that $\EID(K_{2,n})=2n-2=2|V(K_{2,n})|-6$ thus this bound cannot be
improved much.

Corollary~\ref{Bound2|V(G)|-5} implies that Conjecture~\ref{ConjFKKR} holds for a large subclass of line graphs:
\begin{corollary}
If $G$ is a pendant-free graph on $n$ vertices and with average degree 
${\bar d}(G)\geq 5$, then we have
$\ID(\mathcal L(G))\leq n-\frac{n}{\Delta(\mathcal L(G))}$. 
\end{corollary}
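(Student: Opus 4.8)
The plan is to combine Corollary~\ref{Bound2|V(G)|-5} with a lower bound on $\Delta(\mathcal L(G))$ that the density hypothesis forces. Write $m=|E(G)|=|V(\mathcal L(G))|$ and $D=\Delta(\mathcal L(G))$, and recall that $\ID(\mathcal L(G))=\EID(G)$. From $\bar d(G)\ge 5$ I get $2m=\sum_{v}\deg_G(v)=n\,\bar d(G)\ge 5n$, so $m\ge\frac{5n}{2}$; also $5\le\bar d(G)\le\Delta(G)\le n-1$ gives $n\ge 6$. Since $K_4$ and $K_4^-$ have average degrees $3$ and $\frac52$, the hypothesis excludes them, so $G$ is a pendant-free graph on $n$ vertices different from $K_4$ and $K_4^-$. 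Corollary~\ref{Bound2|V(G)|-5} then gives
\[ \ID(\mathcal L(G))=\EID(G)\le 2n-5<2n. \]

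The crux is to show $D\ge 2\bar d(G)-2$. Each edge $e=uv$ satisfies $\deg_{\mathcal L(G)}(e)=\deg_G(u)+\deg_G(v)-2$, so I would bound $\max_{uv\in E(G)}\bigl(\deg_G(u)+\deg_G(v)\bigr)$ below by the corresponding average. Counting each vertex $\deg_G(v)$ times,
\[ \sum_{uv\in E(G)}\bigl(\deg_G(u)+\deg_G(v)\bigr)=\sum_{v\in V(G)}\deg_G(v)^2\ \ge\ \frac{\bigl(\sum_v\deg_G(v)\bigr)^2}{n}=\frac{4m^2}{n}, \]
the middle inequality being Cauchy--Schwarz. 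Dividing by $m$, the average of $\deg_G(u)+\deg_G(v)$ over the edges is at least $\frac{4m}{n}=2\bar d(G)$, hence so is the maximum; therefore $D\ge 2\bar d(G)-2\ge 8$.

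Combining the two estimates with $m\ge\frac{5n}{2}$ and $D\ge 8$ finishes the proof:
\[ |V(\mathcal L(G))|-\frac{|V(\mathcal L(G))|}{\Delta(\mathcal L(G))}=m\Bigl(1-\frac1D\Bigr)\ \ge\ \frac{5n}{2}\cdot\frac78=\frac{35n}{16}>2n>\ID(\mathcal L(G)). \]
This is precisely the bound of Conjecture~\ref{ConjFKKR} (with $c=0$) applied to $\mathcal L(G)$, the right-hand side being read in terms of the order $|V(\mathcal L(G))|=m$ of the line graph.

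I expect the only genuine step to be the estimate $D\ge 2\bar d(G)-2$; everything else is a direct substitution into Corollary~\ref{Bound2|V(G)|-5}. The threshold $\bar d(G)\ge 5$ is comfortably more than the computation needs --- the inequality $m\bigl(1-\frac1D\bigr)\ge 2n-5$ already survives for $\bar d(G)$ slightly below $5$ --- and is chosen so that the clean bound $2n-5$ applies and so that the exceptional graphs $K_4,K_4^-$ are ruled out automatically rather than by a separate check.
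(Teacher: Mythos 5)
Your proof is correct, and it deviates from the paper's exactly at the step you flag as the genuine one. Both arguments share the same skeleton: apply Corollary~\ref{Bound2|V(G)|-5} (whose hypotheses you rightly verify --- $K_4$ and $K_4^-$ have average degree $3$ and $\tfrac52$, so the density assumption excludes them), and then compare $2n-5$ with $m-\tfrac{m}{\Delta(\mathcal L(G))}$, where $m=|E(G)|=|V(\mathcal L(G))|$. The difference is in how $\Delta(\mathcal L(G))$ is bounded below. The paper takes a vertex $u$ of maximum degree $d(u)\geq\bar d(G)\geq 5$ and uses pendant-freeness to find a neighbour $v$ with $d(v)\geq 2$, which gives only $\Delta(\mathcal L(G))\geq \bar d(G)$; it then closes with exact algebra calibrated to the threshold: from $\bar d(G)\geq 5$ it deduces $4n-10\leq(\bar d(G)-1)n$ and multiplies by $\tfrac{\bar d(G)}{2}$ to get $2n-5\leq m-\tfrac{m}{\bar d(G)}\leq m-\tfrac{m}{\Delta(\mathcal L(G))}$. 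Your Cauchy--Schwarz averaging over edges gives the stronger and purely density-based bound $\Delta(\mathcal L(G))\geq 2\bar d(G)-2\geq 8$ --- no pendant-freeness needed at this step (it is still needed, of course, for Corollary~\ref{Bound2|V(G)|-5} itself) --- after which the cruder numeric chain $m\bigl(1-\tfrac1D\bigr)\geq \tfrac{5n}{2}\cdot\tfrac78=\tfrac{35n}{16}>2n>2n-5\geq\EID(G)$ suffices. This is why, as you observe, your version retains slack for $\bar d(G)$ slightly below $5$, whereas the paper's inequality $4n\leq(\bar d(G)-1)n$ is exactly where the hypothesis kicks in; conversely, the paper's degree argument is more elementary and tracks why pendant-freeness appears in the hypothesis. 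One presentational point: the ``$n$'' in the corollary's statement must be read as the order of the line graph, $|V(\mathcal L(G))|=m$, since that is the form in which Conjecture~\ref{ConjFKKR} applies to $\mathcal L(G)$ and is what the paper's proof actually establishes; you make this reading explicit and prove exactly that bound, so your interpretation matches the intended statement.
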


\begin{proof} 
Let $u$ be a vertex of degree $d(u)\geq \bar d(G)\geq 5$. Since $G$ is pendant-free there
is at least one neighbour $v$ of $u$ that is of degree at least~2. Thus there is an edge $uv$
in $G$ with $d(u)+d(v) \geq \bar d(G)+2$ and, therefore, $\Delta ( \mathcal L(G)) \geq \bar d(G)$.
Hence, considering Corollary~\ref{Bound2|V(G)|-5}, it is enough to show that 
$2|V(G)|-5  \leq |E(G)|-\frac{|E(G)|}{\bar d(G)}$. 

To this end, since $\bar d(G) \geq 5$, we have $4 |V(G)|\leq (\bar d(G)-1)|V(G)|$, therefore,
$$4|V(G)|-10 \leq (\bar d(G)-1)|V(G)|.$$ Mutiplying both sides by $\frac{\bar
d(G)}{2}$ we have: 
$$(2|V(G)|-5) \bar d(G) \leq (\bar d(G)-1)\frac{\bar d(G)}{2}|V(G)|=(\bar
d(G)-1) |E(G)|.$$
\end{proof}

\section{Complexity}\label{Complexity}

This section is devoted to the study of the decision problem
associated to the concept of edge-identifying codes. Let us first
define the decision problems we use.
The IDCODE problem is defined as follows:

~

\begin{minipage}{\textwidth}
\textbf{IDCODE}

INSTANCE: A graph $G$ and an integer~$k$.

QUESTION: Does $G$ have an identifying code of size at most~$k$?
\end{minipage}
\vspace{0.3cm}

IDCODE was proved to be NP-complete even when restricted to the class
of bipartite graphs of maximum degree~3 (see~\cite{CHL03}) or to the class of
planar graphs of maximum degree~4 and arbitrarily large girth (see \cite{A10}).
The EDGE-IDCODE problem is defined as follows:

~

\begin{minipage}{\textwidth}
\textbf{EDGE-IDCODE}

INSTANCE: A graph $G$ and an integer~$k$.

QUESTION: Does $G$ have an edge-identifying code of size at most~$k$?
\end{minipage}
\vspace{0.3cm}

We will prove that EDGE-IDCODE is NP-hard in some restricted class of
graphs by reduction from PLANAR ($\le 3,3$)-SAT, which is a variant of
the SAT problem and is defined as follows~\cite{DJPSY94}:

~

\begin{minipage}{\textwidth}
\textbf{PLANAR ($\le 3,3$)-SAT}

INSTANCE: A collection $\mathcal{Q}$ of clauses over a set $X$ of
boolean variables, where each clause contains at least two and at most
three distinct literals (a variable $x$ or its negation
$\overline{x}$). Moreover, each variable appears in exactly three
clauses: twice in its non-negated form, and once in its negated
form. Finally, the bipartite incidence graph of $\mathcal{Q}$, denoted
$B(\mathcal{Q})$, is planar ($B(\mathcal{Q})$ has vertex set
$\mathcal{Q}\cup X$ and $Q\in\mathcal{Q}$ is adjacent to $x\in
X$ if $x$ or $\overline{x}$ appears in clause $Q$).

QUESTION: Can $\mathcal{Q}$ be satisfied, i.e. is there a truth
assignment of the variables of $X$ such that each clause contains at
least one true literal?
\end{minipage}
\vspace{0.3cm}

PLANAR ($\le 3,3$)-SAT is known to be NP-complete~\cite{DJPSY94}. We
are now ready to prove the main result of this section.

\begin{theorem}~\label{thm:NPhard}
  EDGE-IDCODE is NP-complete even when restricted to bipartite planar
  graphs of maximum degree~3 and arbitrarily large girth.
\end{theorem}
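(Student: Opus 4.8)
The plan is to give a polynomial-time reduction from PLANAR ($\le 3,3$)-SAT. Membership in NP is immediate: given a candidate set $\mathcal{C}_E\subseteq E(G)$, one verifies in polynomial time that it edge-dominates and edge-separates $E(G)$ by computing and comparing the sets $N[e]\cap\mathcal{C}_E$. The core of the argument is a construction that, from an instance $\mathcal{Q}$ over a variable set $X$, produces a graph $G$ and an integer $k$ such that $G$ is bipartite, planar, of maximum degree~$3$ and of girth above any prescribed bound, and such that $\mathcal{Q}$ is satisfiable if and only if $\EID(G)\le k$.

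First I would design three kinds of gadgets: a \emph{variable gadget}, a \emph{clause gadget}, and a \emph{connecting wire}. The variable gadget for $x$ is a small pendant-free subgraph admitting exactly two minimum-size local edge-identifying codes (up to the edges forced by degree considerations), which we read as $x=\mathrm{true}$ and $x=\mathrm{false}$; the two optima differ in which of the three designated output edges are left to be covered by the incident wires, with the parity of the choice respecting the fact that $x$ appears twice positively and once negatively. The clause gadget is built so that, within a fixed local budget, it can be completed to an edge-identifying code precisely when at least one of its (two or three) incident literal-wires arrives in the satisfying state. The connecting wires are long, even-length subdivided paths along which the forced code edges propagate the truth value unambiguously from the variable gadget to the clause gadget; since their length may be inflated freely, they are the mechanism that pushes the girth of $G$ above any fixed threshold.

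Next I would set the global budget $k$ to the sum of the minimum contributions of all gadgets and wires, calibrated so tightly that any code of size $\le k$ is forced to spend exactly the minimum number of edges in every gadget. This rigidity converts a code of size $k$ into a consistent truth assignment: the binary choice inside each variable gadget propagates consistently along all of its wires, and each clause gadget certifies a true literal; conversely, a satisfying assignment is turned into a code of size exactly~$k$ by taking, in each gadget, the corresponding local optimum and the induced propagation along its wires. The gadgets are placed following the incidence graph $B(\mathcal{Q})$, which is planar by hypothesis, so the whole construction embeds in the plane; bipartiteness is enforced by making every wire and every gadget cycle even; and the maximum degree stays at~$3$ because each variable gadget has exactly three attachment points and each clause at most three.

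I expect the main obstacle to be the simultaneous satisfaction of all four structural constraints in the gadget design: pendant-freeness (so that $G$ is edge-identifiable at all by Lemma~\ref{lem:girth5} and the surrounding discussion), maximum degree~$3$, bipartiteness, and --- most delicately --- large girth. The large-girth requirement runs against the local nature of edge-identification, since separating two edges relies on code edges within distance one in the line graph; the wires must therefore be subdivided so that no short cycle is created and so that subdivision introduces neither unintended pendant pairs (twins in $\mathcal{L}(G)$) nor slack that would break the forced propagation. The bulk of the work will be exhibiting the variable and clause gadgets explicitly and checking, by case analysis, that at the tight budget~$k$ the minimum-code structure in each gadget is unique up to the single true/false choice. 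Once this is established, global correctness and the verification of planarity, bipartiteness, degree and girth follow routinely from the gadget properties together with the planarity of $B(\mathcal{Q})$.
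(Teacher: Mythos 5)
Your high-level plan coincides with the paper's: a reduction from PLANAR ($\le 3,3$)-SAT, variable and clause gadgets placed along the planar incidence graph $B(\mathcal{Q})$, a budget $k$ calibrated so tightly that any code of size at most $k$ must spend the exact local minimum in every gadget, bipartiteness via even cycles, degree~3 via three attachment points per variable, and girth inflated by lengthening internal paths. However, what you have written is a blueprint, not a proof: the gadgets are never exhibited, and every load-bearing claim --- that the variable gadget has exactly two tight local optima encoding true/false, that the clause gadget can be completed within budget precisely when a true literal arrives, and that the budget forces rigidity globally --- is asserted rather than verified. In the paper these claims are precisely the content of the case analyses (the three-edges-forced property of the $P$-gadget, the forced incident edge at each attachment vertex, the pair-separation analysis inside the variable gadget showing the only two tight completions are $\{t_j^1,t_j^2\}$ or $\{\overline{t_j}^1,\overline{t_j}^2\}$, and the separation of the pair $\{c_1,c_2\}$ in the clause gadget, which is exactly what encodes ``at least one true literal''). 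Since you yourself defer ``the bulk of the work'' to an unperformed case analysis, the proposal does not yet establish the theorem.

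Beyond the missing constructions, one specific step in your plan would fail as stated: the ``connecting wires'' as bare, freely inflatable even subdivided paths. Code placement on a long induced path is not rigid --- near-minimum edge-identifying codes of a path admit several placements with slack --- so a truth value does not propagate ``unambiguously'' along such a wire at zero extra budget, and your tight global count would collapse. The paper's mechanism for large girth is different and is the crux of the girth extension: every internal vertex of the lengthened paths carries its own attached $P$-gadget (a five-edge pendant structure in which any code must contain at least three edges, and which moreover forces a code edge of the host graph at the attachment vertex). This is what makes the path claim work (in a path $v_1v_2v_3v_4$ with $P$-gadgets at $v_2,v_3$, at least one path edge is in any code, and if exactly one it is the middle edge $v_2v_3$), and it is also why the girth parameters enter the budget: lengthening is not free, and $k$ must be recomputed as $(21\lambda+4)m+(17\mu-12)n$ for the modified gadgets $G_{Q_i}(\lambda)$ and $G_{x_j}(\mu)$, yielding girth at least $\min\{4\mu,\,8(\lambda+1)\}$. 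Your proposal correctly anticipates the danger (subdivision creating slack or pendant pairs) but offers no device to neutralize it; without something playing the role of the $P$-gadget, both the forcing claims and the girth argument remain unproven.
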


\begin{proof}
The problem is clearly in NP: given a subset $C$ of edges of $G$,
one can check in polynomial time whether it is an edge-identifying
code of $G$ by computing the sets $C\cap N[e]$ for each edge $e$
and comparing them pairwise.

  We now reduce PLANAR ($\le 3,3$)-SAT to EDGE-IDCODE. We first give
  the proof for the case of girth 8 and show that it can be easily
  extended to an arbitrarily large girth.

  We first need to define a generic sub-gadget (denoted $P$-gadget)
  that will be needed for the reduction. In order to have more compact
  figures, we will use the representation of this gadget as drawn in
  Figure~\ref{fig:smallgadgets}.  We will say that a $P$-gadget is
  {\it attached} at some vertex $x$ if $x$ is incident to edge $a$ of
  the gadget as depicted in the figure. When speaking of a $P$-gadget
  as a subgraph of a graph $G$, we always mean that it forms an
  induced subgraph of $G$, that is, there are no other edges within
  the gadget than $\{a,b,c,d,e\}$ in
  Figure~\ref{fig:smallgadgets}. Moreover, vertex $x$ is the only
  vertex of the $P$-gadget which may be joined by an edge to other
  vertices outside the gadget.

\begin{figure}[!ht]
\centering
\begin{tikzpicture}[join=bevel,inner sep=0.5mm,line width=0.8pt, scale=0.4]
  \node at (-6.5,0) {};
  \path (2,0) node[draw,shape=circle,fill=black] (v1) {};
  \path (v1)+(0,-0.75) node {$x$};
  \path (2,2) node[draw,shape=circle,fill=black] (p0) {};  
  \path (p0)+(0.5,-1) node (f) {$a$};
  \path (0.6,3.4) node[draw,shape=circle,fill=black] (q0) {}; 
  \path (p0)+(-1.3,0.5) node (g) {$b$};

  \path (3.4,3.4) node[draw,shape=circle,fill=black] (r0) {}; 
  \path (p0)+(1.2,0.5) node (i) {$c$};
  \path (r0)+(0,2) node[draw,shape=circle,fill=black] (r1) {}; 
  \path (r0)+(0.5,1) node (j) {$d$};
  \path (r1)+(0,2) node[draw,shape=circle,fill=black] (r2) {}; 
  \path (r1)+(0.5,1) node (k) {$e$};

  \path (2,-3) node (g) {$G$};
 
  \draw [-] (v1) -- (p0) -- (q0)
            (p0) -- (r0) -- (r1) -- (r2);
  \draw (v1)+(0,-3) ellipse (4cm and 3cm);

   \path (8,1) node (arrow) {};
  \draw [->] (arrow) -- +(2,0);

  \path (16,0) node[draw,shape=circle,fill=black] (w1) {};
  \path (w1)+(0,-0.75) node {$x$};
  \path (16,-3) node {$G$};
  \path (16,4) node {$P$};

  \draw [-] (w1) .. controls +(-8,10) and +(8,10) .. (w1);
  \draw (w1)+(0,-3) ellipse (4cm and 3cm);
\end{tikzpicture}
\caption{The generic $P$-gadget}
\label{fig:smallgadgets}
\end{figure}
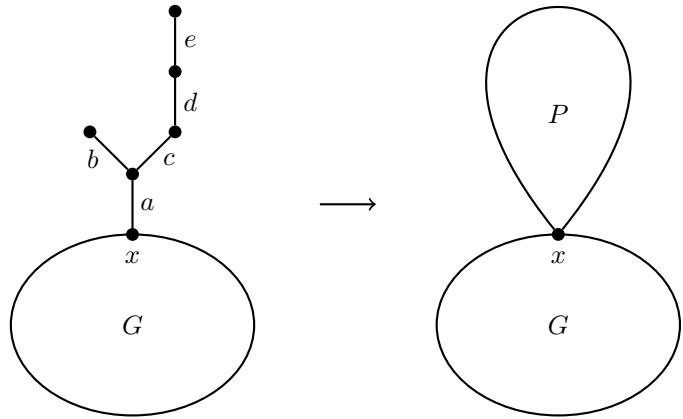

We make the following claims.

\begin{claim}\label{clm:Pgadg} In any graph containing a $P$-gadget, at least three edges of this
gadget must belong to any edge-identifying code.
\end{claim}

Claim~\ref{clm:Pgadg} is true because $d$ is the only edge separating
$b$ and $c$. Similarly $c$ is the only edge separating $d$ and $e$.
Finally, in order to separate $d$ and $c$, one has to take at least one of
$a$, $b$ or $e$.

\begin{claim}\label{clm:Fgadg2}
 If $G$ is a pendant-free graph obtained from a graph $H$ with a $P$-gadget
attached at a vertex $x$ of $H$, then any edge-identifying code of $G$ must contain
an edge of $H$ incident to $x$.
\end{claim}

Claim~\ref{clm:Fgadg2} follows from the fact that edge $a$ must be
separated from edge $b$.

\vspace{0.3cm}
We are now ready to describe the reduction.

Given an instance $\mathcal{Q}=\{Q_1,\ldots,Q_m\}$ of PLANAR ($\le 3,3$)-SAT
over the set of boolean variables $X=\{x_1,\ldots,x_n\}$ together with
an embedding of its bipartite incidence graph $B(\mathcal{Q})$ in the
plane, we build the graph $G_\mathcal{Q}$ as follows.

For each variable $x_j$ and clause $Q_i$ we build the subgraphs
$G_{x_j}$ and $G_{Q_i}$ respectively, as shown in Figure~\ref{Gadget}.
We recall that a given variable $x_j$ appears in positive form in
exactly two clauses, say $Q_p$, $Q_q$, and in negative form in exactly
one clause, say $Q_r$.  We then unify\symbolfootnote[1]{We use the
  term ``unify'' instead of the usual term ``identify'' in order to
  avoid confusion with identifying codes.} vertex $x_j^1$ of
$G_{x_{j}}$ with vertex $l_{p_{k}}$ of $G_{Q_{p}}$ which corresponds
to $x_j$ . We do a similar unification for vertices $x_j^2$ and
$\overline{x_j}^1$ with corresponding vertices from $G_{Q_{q}}$ and
$G_{Q_{r}}$. The intuition is that vertices of the form $l_{i_j}$ in
the clause gadgets will represent literals of the clauses, and
vertices of the form $x_i^j$, $\overline{x_i}^j$ of the vertex gadgets represent positive and negative occurences of a variable, respectively.

This can be done while ensuring the
planarity of $G_\mathcal{Q}$, using the given planar embedding
of $B(\mathcal{Q})$. Moreover, $G_\mathcal{Q}$ is bipartite because
$B(\mathcal{Q})$ is bipartite, there are no odd cycles in the
variable and clause gadgets and there is no path of odd length between
$l_{i_j}$'s. Finally, it is easy to see that $G_\mathcal{Q}$ has
maximum degree~3 and girth~8. Since a clause gadget has fourty-five vertices
and a variable gadget, fourty-two vertices, $G_\mathcal{Q}$ has $45m+42n$
vertices and, therefore, the construction has polynomial size in terms
of the size of $\mathcal{Q}$.

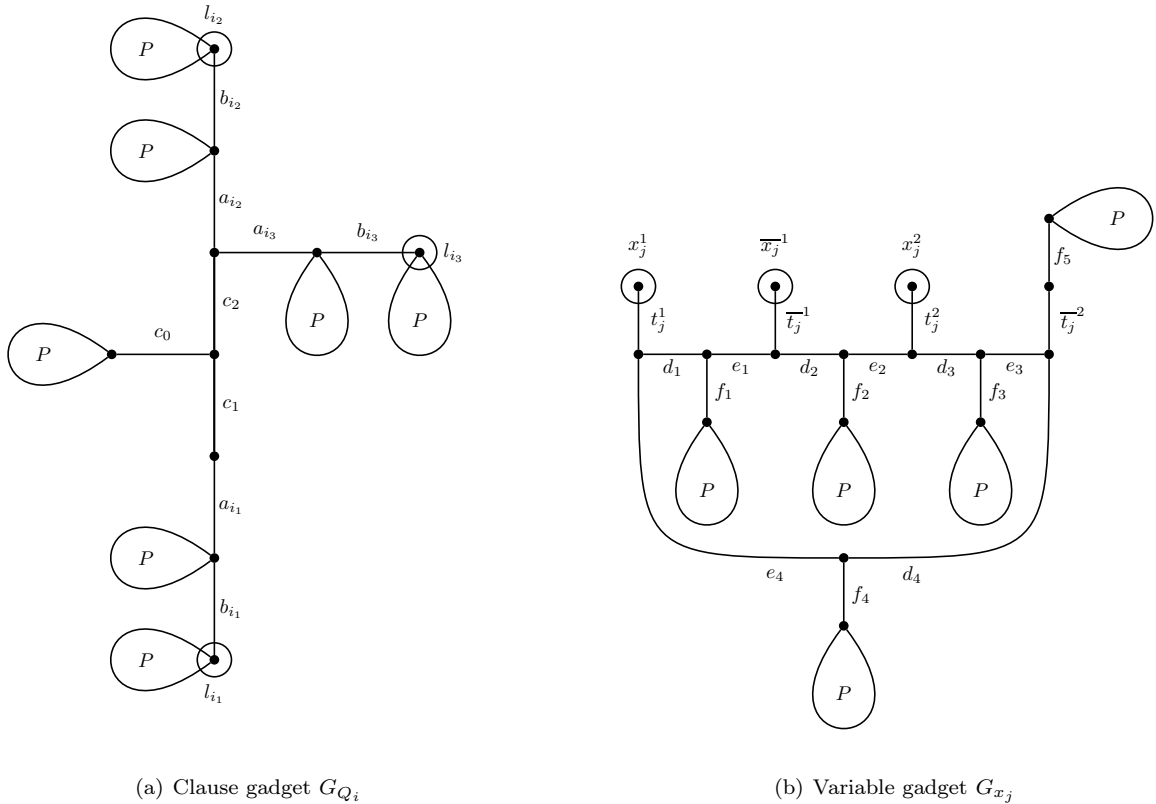
\begin{figure}[!ht]
\centering
\subfigure[Clause gadget $G_{Q_i}$]{
\scalebox{0.75}{\begin{tikzpicture}[join=bevel,inner sep=0.5mm,line width=0.8pt, scale=0.6, rotate=90]
  \path (0,0) node[draw,shape=circle,fill=black] (v0) {};
  \path (v0)+(-1,0) node {$l_{i_1}$};
  \path (v0)+(1.5,-0.5) node {$b_{i_1}$};
  \path (3,0) node[draw,shape=circle,fill=black] (v1) {};
  \path (v1)+(1.5,-0.5) node {$a_{i_1}$};
  \path (6,0) node[draw,shape=circle,fill=black] (v5) {};
  \path (v5)+(1.5,-0.5) node {$c_1$};
  \path (9,0) node[draw,shape=circle,fill=black] (v6) {};
  \path (v6)+(1.5,-0.5) node {$c_2$};
  \path (v6)+(0,3) node[draw,shape=circle,fill=black] (v) {};
  \path (v)+(0.6,-1.5) node {$c_0$};
  \path (12,0) node[draw,shape=circle,fill=black] (v2) {};
  \path (v2)+(1.5,-0.5) node {$a_{i_2}$};
  \path (15,0) node[draw,shape=circle,fill=black] (v3) {};
  \path (v3)+(1.5,-0.5) node {$b_{i_2}$};
  \path (18,0) node[draw,shape=circle,fill=black] (v4) {};
  \path (v4)+(1,0) node {$l_{i_2}$};
  \path (12,-3) node[draw,shape=circle,fill=black] (v7) {};
  \path (v7)+(0.6,1.5) node {$a_{i_3}$};
  \path (12,-6) node[draw,shape=circle,fill=black] (v8) {};
  \path (v8)+(0.6,1.5) node {$b_{i_3}$};
  \path (v8)+(0,-1) node {$l_{i_3}$};

  \draw [-] (v0) -- (v1) -- (v2) -- (v3) -- (v4)
            (v5) -- (v6) -- (v2) -- (v7) -- (v8)
            (v) -- (v6);

  \draw (v0) ellipse (0.5cm and 0.5cm)
        (v4) ellipse (0.5cm and 0.5cm)
        (v8) ellipse (0.5cm and 0.5cm);

  \path (v0)+(0,2) node  {$P$};
  \path (v1)+(0,2) node {$P$};
  \path (v)+(0,2) node {$P$};
  \path (v3)+(0,2) node  {$P$};
  \path (v4)+(0,2) node {$P$};
  \path (v7)+(-2,0) node  {$P$};
  \path (v8)+(-2,0) node  {$P$};

  \draw [-] (v0) .. controls +(-3,4) and +(3,4) .. (v0)
            (v1) .. controls +(-3,4) and +(3,4) .. (v1)
            (v3) .. controls +(-3,4) and +(3,4) .. (v3)
            (v4) .. controls +(-3,4) and +(3,4) .. (v4)
            (v7) .. controls +(-4,3) and +(-4,-3) .. (v7)
            (v8) .. controls +(-4,3) and +(-4,-3) .. (v8)
            (v) .. controls +(-3,4) and +(3,4) .. (v);
\end{tikzpicture}}
}
\qquad
\subfigure[Variable gadget $G_{x_j}$]{
\scalebox{0.75}{\begin{tikzpicture}[join=bevel,inner sep=0.5mm,line width=0.8pt, scale=0.6]
  \path (0,0) node[draw,shape=circle,fill=black] (v0) {};
  \path (v0)+(1,-0.4) node {$d_1$};
  \path (2,0) node[draw,shape=circle,fill=black] (v1) {};
  \path (v1)+(1,-0.4) node {$e_1$};
  \path (v1)+(0,-2) node[draw,shape=circle,fill=black] (v02) {};
  \path (v02)+(0.5,0.9) node {$f_1$};

  \path (4,0) node[draw,shape=circle,fill=black] (v2) {};
  \path (v2)+(1,-0.4) node {$d_2$};
  \path (6,0) node[draw,shape=circle,fill=black] (v3) {};
  \path (v3)+(1,-0.4) node {$e_2$};
  \path (v3)+(0,-2) node[draw,shape=circle,fill=black] (v24) {};
  \path (v24)+(0.5,0.9) node {$f_2$};
  \path (8,0) node[draw,shape=circle,fill=black] (v4) {};
  \path (v4)+(1,-0.4) node {$d_3$};
  \path (10,0) node[draw,shape=circle,fill=black] (v5) {};
  \path (v5)+(1,-0.4) node {$e_3$};
  \path (v5)+(0,-2) node[draw,shape=circle,fill=black] (v46) {};
  \path (v46)+(0.5,0.9) node {$f_3$};
  \path (12,0) node[draw,shape=circle,fill=black] (v6) {};
  \path (6,-6) node[draw,shape=circle,fill=black] (v7) {};
  \path (v7)+(-2,-0.5) node {$e_4$};
  \path (v7)+(0,-2) node[draw,shape=circle,fill=black] (v60) {};
  \path (v60)+(0.5,0.9) node {$f_4$};
  \path (v7)+(2,-0.5) node {$d_4$};

  \path (0,2) node[draw,shape=circle,fill=black] (u0) {};
  \path (u0)+(0.6,-1) node {$t_j^1$};
  \path (u0)+(0,1.2) node {$x_j^1$};
  \path (4,2) node[draw,shape=circle,fill=black] (u1) {};
  \path (u1)+(0.7,-1) node {$\overline{t_{j}}^1$};
  \path (u1)+(0,1.2) node {$\overline{x_j}^1$};
  \path (8,2) node[draw,shape=circle,fill=black] (u2) {};
  \path (u2)+(0.6,-1) node {$t_j^2$};
  \path (u2)+(0,1.2) node {$x_j^2$};
  \path (12,2) node[draw,shape=circle,fill=black] (u3) {};
  \path (u3)+(0.7,-1) node {$\overline{t_{j}}^2$};

  \path (12,4) node[draw,shape=circle,fill=black] (x) {};
  \path (x)+(0.4,-1.1) node {$f_5$};

  \draw (u0) ellipse (0.5cm and 0.5cm)
        (u1) ellipse (0.5cm and 0.5cm)
        (u2) ellipse (0.5cm and 0.5cm);
 
  \draw [-] (u0) -- (v0) -- (v1) -- (v2) -- (v3) -- (v4) -- (v5) -- (v6) -- (u3) -- (x) 
            (u1) -- (v2)
            (u2) -- (v4)
            (v1) -- (v02)
            (v3) -- (v24)
            (v5) -- (v46)
            (v7) -- (v60)
            (v0) .. controls +(0,-6) .. (v7)
            (v6) .. controls +(0,-6) .. (v7);

  \path (v02)+(0,-2) node  {$P$};
  \path (v24)+(0,-2) node {$P$};
  \path (v46)+(0,-2) node  {$P$};
  \path (v60)+(0,-2) node  {$P$};

  \path (14,4) node  {$P$};

  \draw [-] (v02) .. controls +(-3,-4) and +(3,-4) .. (v02)
            (v24) .. controls +(-3,-4) and +(3,-4) .. (v24)
            (v46) .. controls +(-3,-4) and +(3,-4) .. (v46)
            (v60) .. controls +(-3,-4) and +(3,-4) .. (v60)
            (x) .. controls +(4,-3) and +(4,3) .. (x);
\end{tikzpicture}}
}
\caption{Reduction gadgets for clause $Q_i$ and variable
  $x_j$}
\label{Gadget}
\end{figure}

We will need two additional claims in order to complete the proof.

\begin{claim}\label{clm:vargadg}
In a variable gadget $G_{x_j}$, in order to separate the four pairs of edges
$\{d_i,e_i\}$ for $1\leq i\leq 4$, at least two edges of
$A=\{d_i,e_i~|~1\leq i\leq 4\}\cup\{t_j^1, \overline{t_{j}}^1,
t_j^2, \overline{t_{j}}^2\}$ belong to any edge-identifying code
$C$. Moreover, if $|C\cap A|=2$, then either $C\cap A=\{t_j^1,
t_j^2\}$ or $C\cap A=\{\overline{t_{j}}^1, \overline{t_{j}}^2\}$.
\end{claim}

The first part of Claim~\ref{clm:vargadg} follows from the fact that
the two edges of each of the pairs $\{d_1,e_1\}$ and $\{d_3,e_3\}$
must be separated. The second follows from an easy case analysis.

The following claim follows directly from Claim~\ref{clm:Fgadg2}.

\begin{claim}\label{clm:Fpath}
Let $v_1v_2v_3v_4$ be a path of four vertices of $G_\mathcal{Q}$ where
each of the vertices $v_2$ and $v_3$ has its own $P$-gadget attached
and both $v_2$ and $v_3$ have degree~3. Then, at least one of the
three edges of the path belong to any identifying code of the
graph. If exactly one belongs to a code, it must be $v_2v_3$.
\end{claim}

We now claim that $\mathcal{Q}$ is satisfiable if and only if
$G_\mathcal{Q}$ has an edge-identifying code of size at most $k=25m+22n$.

\vspace{0.3cm}
For the sufficient side, given a truth assignment of the variables
satisfying $\mathcal{Q}$, we build an edge-identifying code $\mathcal C$ as  follows. For
each $P$-gadget, edges $a,c,d$ are in $\mathcal C$. For   each clause gadget $G_{Q_i}$, edge
$c_0$ belongs to $\mathcal C$. For each   literal $l_{i_k}$ of $Q_i$, $1\le k\le 3$, if $l_{i_k}$ is true,
edge $a_{i_k}$ belongs to $\mathcal C$; otherwise, edge $b_{i_k}$ belongs to $\mathcal C$. If $Q_i$ has only
two literals and vertex $l_{i_k}$ is the vertex not corresponding to a literal of $Q_i$, then
edge $b_{i_k}$ belongs to $\mathcal C$. Now, one can see that all edges of $G_{Q_i}$ are dominated.
Furthermore, all pairs of edges of $G_{Q_i}$ are separated. This can be easily
seen for all pairs besides $\{c_1, c_2\}$. For this pair, since we are
considering a satisfying assignment of $\mathcal{Q}$, in every clause $Q_i$ of
$\mathcal{Q}$, there exists a true literal. Hence,  for each clause $Q_i$, at least one edge
$a_{i_j}$ with $1\le j \le 3$, must be in the code and, therefore, the pair $\{c_1, c_2\}$ is
separated.

Next, in each variable gadget $G_{x_j}$, if $x_j$ is true, edges $t_j^1$ and $t_j^2$ belong
to $\mathcal C$. Otherwise, edges $\overline{t_j}^1$ and $\overline{t_j}^2$ belong to $\mathcal C$. Edges
$f_1,f_2,f_3,f_4$ and $f_5$ also belong to $\mathcal C$. Because of this choice, all edges of
 $G_{x_j}\setminus \{t_j^1,t_j^2,\overline{t_j}^1\}$ are dominated. Since each of the three
edges $t_j^1,t_j^2,\overline{t_j}^1$ is incident to a vertex of a $P$-gadget of some clause
gadget, they are also dominated. Moreover, all pairs of edges containing at least one edge of
$G_{x_j}\setminus \{t_j^1,t_j^2,\overline{t_j}^1\}$ are clearly separated. Now, since for
each $P$-gadget of the clause gadgets, edge $a$ is in $\mathcal C$, $t_j^1,t_j^2,\overline{t_j}^1$
are separated from all edges in $G_{\mathcal{Q}}$.

We conclude that $\mathcal C$ is an edge-identifying code of size~$k$.

\vspace{0.3cm}
For the necessary side, let $\mathcal{C}'$ be an edge-identifying code of
$G_\mathcal{Q}$ with $|\mathcal{C}'|\leq k$. It follows from
Claim~\ref{clm:Pgadg} that at least three edges of each of the seven
$P$-gadgets of a clause gadget $G_{Q_i}$ must belong to
$\mathcal{C}'$. Moreover, by Claim~\ref{clm:Fgadg2}, edge $c_0$ is forced to be
in any code. Finally, by Claim~\ref{clm:Fgadg2}, for each vertex
$l_{i_k}$ ($1\le k\le 3$) of $G_{Q_i}$, at least one of the edges $a_{i_k}$ and $b_{i_k}$ is
in $\mathcal{C}'$.

Note that this is a total of at least twenty-five edges per clause
gadget.

Similarly, it follows from Claim~\ref{clm:Pgadg} that in each variable
gadget $G_{x_j}$, at least fifteen edges of $\mathcal{C}'$ are contained in the
$P$-gadgets of $G_{x_j}$. Following
Claim~\ref{clm:Fgadg2}, all edges $f_i$ ($1\leq i\leq 5$) belong to
$\mathcal{C}'$. Note that this is a total of at least twenty edges in each variable
gadget. We have considered $25m+20n$ edges of $\mathcal{C}'$ so far. Hence $2n$
edges remain to be considered. It follows from Claim~\ref{clm:vargadg}
that for each variable gadget, at least two additional edges belong to
$\mathcal{C}'$ (in order to separate the pairs $\{d_i,e_i\}$, for $1\leq i\leq
4$). Therefore, since $|C'|\leq k$, in each variable gadget, exactly two of these edges
belong to $\mathcal{C}'$. Hence, following the second part of
Claim~\ref{clm:vargadg}, either $\{t_j^1, t_j^2\}$ or
$\{\overline{t_{j}}^1, \overline{t_{j}}^2\}$ is a subset of $\mathcal{C}'$.

Remark that we have now considered all $k=25m+22n$ edges of $\mathcal{C}'$.
Therefore, in each clause gadget
$G_{Q_i}$, \emph{exactly} one of the edges $a_{i_k}$ and $b_{i_k}$ of
$G_{Q_i}$ belongs to $\mathcal{C}'$.

We can now build the following truth assignment: for each variable
gadget, if $\{t_j^1, t_j^2\}$ is a subset of $\mathcal{C}'$, $x_j$ is set to
TRUE. Otherwise, $\{\overline{t_{j}}^1, \overline{t_{j}}^2\}$ is a
subset of $\mathcal{C}'$ and $x_j$ is set to FALSE. Let us prove that this
assignment satisfies $\mathcal{Q}$.

In each clause gadget $G_{Q_i}$, note that edges $c_1$ and $c_2$ must
be separated by $\mathcal{C}'$; this means that one edge $a_{i_k}$ from
$\{a_{i_1},a_{i_2},a_{i_3}\}$ belongs to $\mathcal{C}'$. Hence, as noted in the
previous paragraph, $b_{i_k}\notin \mathcal{C}'$ and by Claim~\ref{clm:Fpath},
in the path formed by edges $\{a_{i_k},b_{i_k},t_j^1\}$, $t_j^1$
belongs to the code (without loss of generality, we suppose that
$l_{i_k}=x_j$ and $t_j^1$ is the edge of $G_{x_j}$ incident to vertex
$l_{i_k}$ of $G_{Q_i}$). Therefore, in the constructed truth assignment,
literal $l_{i_k}$ has value TRUE, and the clause is
satisfied. Repeating this argument for each clause shows that the
formula is satisfied.

\vspace{0.3cm}
Now, it remains to show that similar arguments can be used to prove the
final statement of the theorem for larger girth. Consider some integers $\lambda \ge 1$
and $\mu \ge 2$. We build the graph $G_\mathcal{Q}(\lambda,\mu)$ using
modified variable gadgets $G_{x_j}(\mu)$ and modified clause gadgets
$G_{Q_i}(\lambda)$, which are depicted in
Figure~\ref{Gadget_biggirth}. The construction is the same as in the
previous proof and $G_\mathcal{Q}(\lambda,\mu)$ has
$(36\lambda+9)m+(30\mu-18)n$ vertices. We claim that the girth of
$G_\mathcal{Q}(\lambda,\mu)$ is now  at least
$\min\{4\mu,8(\lambda+1)\}$. Indeed, $G_{x_j}(\mu)$ has a cycle of
size exactly $4\mu$ and since the girth of $B(\mathcal{Q})$ is at
least~$4$, it follows that the minimum length of a cycle between some
clause gadgets (at least two) and some variable gadgets (at least two)
is at least $4(2\lambda+1)+2+2=8(\lambda+1)$.

Now, using a similar proof as the proof for girth~8, it can be shown
that $\mathcal{Q}$ is satisfiable if and only if
$G_\mathcal{Q}(\lambda,\mu)$ has an identifying code of size at most
$k=(21\lambda+4)m + (17\mu - 12)n$.
\begin{figure}[!ht]
\centering
\subfigure[Clause gadget $G_{Q_i}(\lambda)$]{
\scalebox{0.6}{\begin{tikzpicture}[join=bevel,inner sep=0.5mm,line width=0.8pt, scale=0.6]
  \path (-6,0) node[draw,shape=circle,fill=black] (v0) {};
  \path (-3,0) node[draw,shape=circle,fill=black] (v1) {};
  \path (0,0) node[draw,shape=circle,fill=black] (v11) {};
  \path (v11)+(1.5,0) node {$...$};
  \path (3,0) node[draw,shape=circle,fill=black] (v12) {};
  \path (6,0) node[draw,shape=circle,fill=black] (v13) {};
  \path (9,0) node[draw,shape=circle,fill=black] (vn1) {};
  \path (12,0) node[draw,shape=circle,fill=black] (vn2) {};
  \path (12,3) node[draw,shape=circle,fill=black] (v) {};

  \path (15,0) node[draw,shape=circle,fill=black] (v2) {};
  \path (18,0) node[draw,shape=circle,fill=black] (v3) {};
  \path (21,0) node[draw,shape=circle,fill=black] (v4) {};
  \path (v4)+(1.5,0) node {$...$};
  \path (24,0) node[draw,shape=circle,fill=black] (v41) {};
  \path (27,0) node[draw,shape=circle,fill=black] (v42) {};
  \path (30,0) node[draw,shape=circle,fill=black] (v43) {};

  \path (15,-3) node[draw,shape=circle,fill=black] (v7) {};
  \path (15,-6) node[draw,shape=circle,fill=black] (v71) {};
  \path (15,-9) node[draw,shape=circle,fill=black] (v72) {};
  \path (v72)+(0,1.5) node[rotate=90] {$...$};
  \path (15,-12) node[draw,shape=circle,fill=black] (v73) {};

  \path (15,-15) node[draw,shape=circle,fill=black] (v8) {};

  \draw [-] (v0) -- (v1) -- (v11)
            (v) -- (vn2)
            (v12) -- (v13) -- (vn1) -- (vn2) -- (v2) -- (v3) -- (v4)
            (v41) -- (v42) -- (v43)
            (v2) -- (v7) -- (v71)
            (v72) -- (v73) -- (v8);

  \draw (v0) ellipse (0.5cm and 0.5cm)
        (v43) ellipse (0.5cm and 0.5cm)
        (v8) ellipse (0.5cm and 0.5cm);

  \path (v0)+(0,2) node  {$P$};
  \path (v1)+(0,2) node {$P$};
  \path (v11)+(0,2) node  {$P$};
  \path (v12)+(0,2) node {$P$};
  \path (v13)+(0,2) node  {$P$};
  \path (v)+(0,2) node  {$P$};

  \path (18,2) node  {$P$};
  \path (21,2) node {$P$};
  \path (24,2) node  {$P$};
  \path (27,2) node {$P$};
  \path (30,2) node {$P$};

  \path (13,-3) node  {$P$};
  \path (13,-6) node  {$P$};
  \path (13,-9) node  {$P$};
  \path (13,-12) node  {$P$};
  \path (13,-15) node  {$P$};

  \draw [-] (v0) .. controls +(-3,4) and +(3,4) .. (v0)
            (v1) .. controls +(-3,4) and +(3,4) .. (v1)
            (v11) .. controls +(-3,4) and +(3,4) .. (v11)
            (v12) .. controls +(-3,4) and +(3,4) .. (v12)
            (v13) .. controls +(-3,4) and +(3,4) .. (v13)
            (v) .. controls +(-3,4) and +(3,4) .. (v)

            (v3) .. controls +(-3,4) and +(3,4) .. (v3)
            (v4) .. controls +(-3,4) and +(3,4) .. (v4)
            (v41) .. controls +(-3,4) and +(3,4) .. (v41)
            (v42) .. controls +(-3,4) and +(3,4) .. (v42)
            (v43) .. controls +(-3,4) and +(3,4) .. (v43)

            (v7) .. controls +(-4,3) and +(-4,-3) .. (v7)
            (v71) .. controls +(-4,3) and +(-4,-3) .. (v71)
            (v72) .. controls +(-4,3) and +(-4,-3) .. (v72)
            (v73) .. controls +(-4,3) and +(-4,-3) .. (v73)
            (v8) .. controls +(-4,3) and +(-4,-3) .. (v8);

\path (v0)+(-1,3.5) node (x1) {};
\path (v13)+(1,3.5) node (x2) {};
\path (0,5) node {$2\lambda$ times};
\draw[snake={brace},segment amplitude=5mm,line width=0.8pt] (x1) to (x2);

\path (v3)+(-1,3.5) node (x3) {};
\path (v43)+(1,3.5) node (x4) {};
\path (24,5) node {$2\lambda$ times};
\draw[snake={brace},segment amplitude=5mm,line width=0.8pt] (x3) to (x4);

\path (v7)+(-3.5,1) node (x5) {};
\path (v8)+(-3.5,-1) node (x6) {};
\path (9.5,-9) node[rotate=90] {$2\lambda$ times};
\draw[snake={brace},segment amplitude=5mm,mirror snake,line width=0.8pt] (x5) to (x6);
\end{tikzpicture}}}
\qquad
\subfigure[Variable gadget $G_{x_j}(\mu)$]{
\scalebox{0.6}{\begin{tikzpicture}[join=bevel,inner sep=0.5mm,line width=0.8pt, scale=0.6]
  \path (0,0) node[draw,shape=circle,fill=black] (v0) {};
  \path (2,0) node[draw,shape=circle,fill=black] (v1) {};
  \path (v1)+(0,-2) node[draw,shape=circle,fill=black] (v02) {};

  \path (4,0) node[draw,shape=circle,fill=black] (v2) {};
  \path (6,0) node[draw,shape=circle,fill=black] (v3) {};
  \path (v3)+(0,-2) node[draw,shape=circle,fill=black] (v24) {};

  \path (8,0) node[draw,shape=circle,fill=black] (v4) {};
  \path (10,0) node[draw,shape=circle,fill=black] (v5) {};
  \path (v5)+(0,-2) node[draw,shape=circle,fill=black] (v46) {};

  \path (12,0) node[draw,shape=circle,fill=black] (v6) {};
  \path (14,0) node[draw,shape=circle,fill=black] (v7) {};
  \path (v7)+(1.5,0) node {$...$};
  \path (v7)+(0,-2) node[draw,shape=circle,fill=black] (v68) {};

  \path (17,0) node[draw,shape=circle,fill=black] (v8) {};
  \path (v8)+(0,-2) node[draw,shape=circle,fill=black] (v810) {};
  \path (19,0) node[draw,shape=circle,fill=black] (v9) {};

  \path (21,0) node[draw,shape=circle,fill=black] (v10) {};
  \path (v10)+(0,-2) node[draw,shape=circle,fill=black] (v1012) {};
  \path (23,0) node[draw,shape=circle,fill=black] (v11) {};

  \path (11.5,-7) node[draw,shape=circle,fill=black] (v12) {};

  \path (v12)+(0,-2) node[draw,shape=circle,fill=black] (v120) {};

  \path (0,2) node[draw,shape=circle,fill=black] (u0) {};
  \path (4,2) node[draw,shape=circle,fill=black] (u1) {};
  \path (8,2) node[draw,shape=circle,fill=black] (u2) {};
  \path (12,2) node[draw,shape=circle,fill=black] (u3) {};
  \path (19,2) node[draw,shape=circle,fill=black] (u4) {};
  \path (23,2) node[draw,shape=circle,fill=black] (u5) {};

  \path (12,4) node[draw,shape=circle,fill=black] (x) {};
  \path (19,4) node[draw,shape=circle,fill=black] (y) {};
  \path (23,4) node[draw,shape=circle,fill=black] (z) {};

  \draw (u0) ellipse (0.5cm and 0.5cm)
        (u1) ellipse (0.5cm and 0.5cm)
        (u2) ellipse (0.5cm and 0.5cm);
 
  \draw [-] (u0) -- (v0) -- (v1) -- (v2) -- (v3) -- (v4) -- (v5) -- (v6) -- (u3) -- (x)
            (v1) -- (v02)
            (v3) -- (v24)
            (v5) -- (v46)
            (v7) -- (v68)
            (v8) -- (v810)
            (v10) -- (v1012)
            (v12) -- (v120)
            (u1) -- (v2)
            (u2) -- (v4)
            (v6) -- (v7)
            (v8) -- (v9) -- (u4) -- (y)
            (v9) -- (v10) -- (v11) -- (u5) -- (z)
            (v0) .. controls +(0.5,-6.5) .. (v12)
            (v11) .. controls +(-0.5,-6.5) .. (v12);

  \path (v02)+(0,-2) node  {$P$};
  \path (v24)+(0,-2) node {$P$};
  \path (v46)+(0,-2) node  {$P$};
  \path (v68)+(0,-2) node  {$P$};
  \path (v810)+(0,-2) node  {$P$};
  \path (v1012)+(0,-2) node  {$P$};
  \path (v120)+(0,-2) node  {$P$};

  \path (x)+(2,0) node  {$P$};
  \path (y)+(2,0) node  {$P$};
  \path (z)+(2,0) node  {$P$};

  \draw [-] (v02) .. controls +(-3,-4) and +(3,-4) .. (v02)
            (v24) .. controls +(-3,-4) and +(3,-4) .. (v24)
            (v46) .. controls +(-3,-4) and +(3,-4) .. (v46)
            (v68) .. controls +(-3,-4) and +(3,-4) .. (v68)
            (v810) .. controls +(-3,-4) and +(3,-4) .. (v810)
            (v1012) .. controls +(-3,-4) and +(3,-4) .. (v1012)
            (v120) .. controls +(-3,-4) and +(3,-4) .. (v120)
            (x) .. controls +(4,-3) and +(4,3) .. (x)
            (y) .. controls +(4,-3) and +(4,3) .. (y)
            (z) .. controls +(4,-3) and +(4,3) .. (z);

\path (x)+(-0.5,1.3) node (s) {};
\path (z)+(3,1.3) node (t) {};
\path (19,6.7) node {$2\mu-3$ times};
\draw[snake={brace},segment amplitude=5mm,line width=0.8pt] (s) to (t);

\end{tikzpicture}}
}
\caption{Reduction gadgets for clause $Q_i$ and variable
  $x_j$ for arbitrarily large girth}
\label{Gadget_biggirth}
\end{figure}
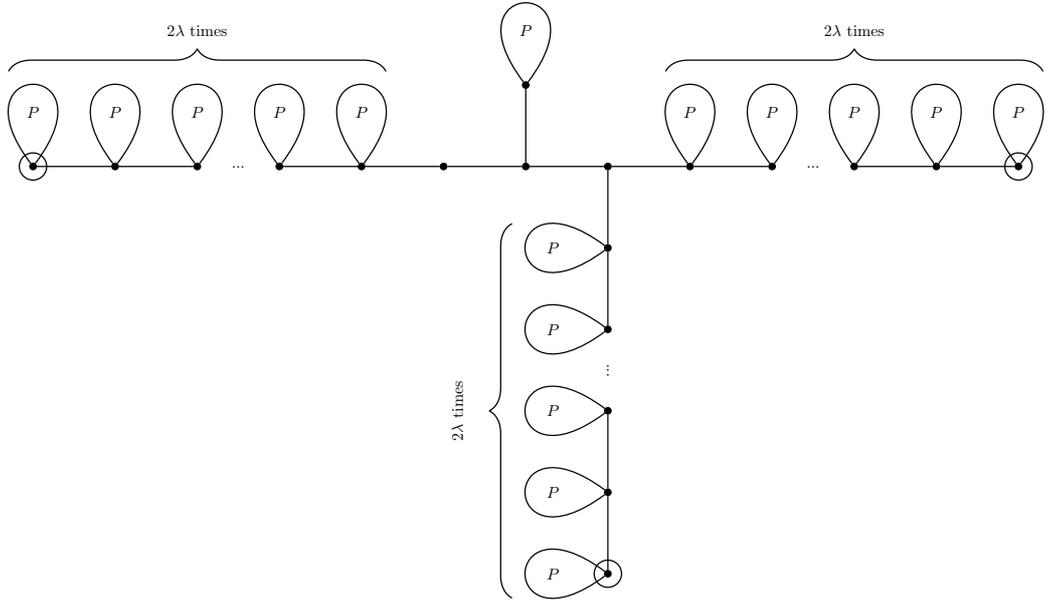
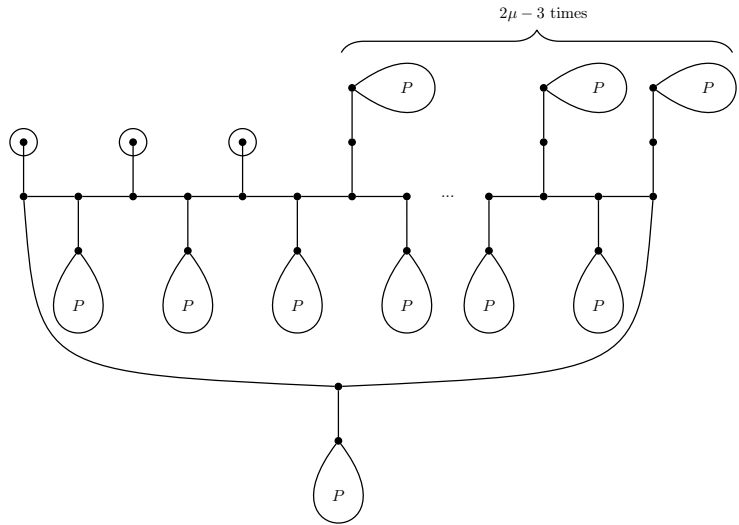
\end{proof}

Recall that a graph is \emph{perfect} if and only if for each of its
induced subgraphs $H$, the chromatic number of $H$ equals the clique
number of $H$.  It is known that a line graph $\mathcal{L}(G)$ is
perfect if and only if $G$ has no odd cycles of length more
than~3, see~\cite{T77}. Moreover, one can check that the line graphs
of the graphs constructed in the previous proof are planar, have
maximum degree~4 and clique number~3. Therefore, the following
corollary follows:

\begin{corollary}
IDCODE is NP-complete even when restricted to perfect 3-colorable
planar line graphs of maximum degree~4.
\end{corollary}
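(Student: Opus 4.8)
The plan is to deduce this corollary directly from Theorem~\ref{thm:NPhard} by passing to line graphs, exploiting the equivalence (noted in the introduction) between edge-identifying codes of $G$ and vertex-identifying codes of $\mathcal{L}(G)$. Concretely, for any graph $G$ one has $\ID(\mathcal{L}(G))=\EID(G)$, and $\mathcal{L}(G)$ can be computed from $G$ in polynomial time, so the map $G\mapsto\mathcal{L}(G)$ is a polynomial reduction from EDGE-IDCODE to IDCODE. Applying this to the graphs $G_\mathcal{Q}$ (say with girth~$8$) produced by the reduction of Theorem~\ref{thm:NPhard} shows that IDCODE is NP-hard already on the class of graphs of the form $\mathcal{L}(G_\mathcal{Q})$; it then remains to verify that every such line graph has the four claimed properties and that IDCODE lies in NP on this class.

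Membership in NP is immediate (guess a vertex subset and check the identifying-code conditions in polynomial time), and the bound on the maximum degree is easy: since each $G_\mathcal{Q}$ has $\Delta(G_\mathcal{Q})=3$, the degree of a vertex $uv$ of $\mathcal{L}(G_\mathcal{Q})$ equals $d_{G_\mathcal{Q}}(u)+d_{G_\mathcal{Q}}(v)-2\le 4$. For perfection and $3$-colourability I would invoke the characterisation from~\cite{T77}: because each $G_\mathcal{Q}$ is bipartite it has no odd cycle at all, a fortiori no odd cycle of length more than~$3$, so $\mathcal{L}(G_\mathcal{Q})$ is perfect. The cliques of a line graph arise either from a set of edges sharing a common endpoint or from a triangle of $G_\mathcal{Q}$; as $G_\mathcal{Q}$ is triangle-free with $\Delta=3$, the largest clique of $\mathcal{L}(G_\mathcal{Q})$ has size~$3$ (attained at any degree-$3$ vertex of $G_\mathcal{Q}$, of which the gadgets contain many). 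Hence $\omega(\mathcal{L}(G_\mathcal{Q}))=3$, and perfection gives $\chi(\mathcal{L}(G_\mathcal{Q}))=\omega(\mathcal{L}(G_\mathcal{Q}))=3$, i.e. the graph is $3$-colourable.

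The one property that genuinely needs an argument is planarity, since line graphs of planar graphs need not be planar in general. The point is again that $\Delta(G_\mathcal{Q})=3$: the line graph is obtained from $G_\mathcal{Q}$ by replacing each vertex $v$ of degree $d\le 3$ with a clique $K_d$ on its incident edges, these cliques being glued along the shared edges of $G_\mathcal{Q}$. Taking a fixed planar embedding of $G_\mathcal{Q}$ and, at each vertex $v$ of degree~$3$, placing the three new vertices in a small disc around $v$ and joining them by a triangle following the local rotation of the edges at $v$, produces a planar embedding of $\mathcal{L}(G_\mathcal{Q})$ (for $d\le 2$ the corresponding clique is a single vertex or a single edge and introduces no crossings). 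Equivalently one may quote Sedl\'a\v{c}ek's criterion, whose hypotheses ($G$ planar, $\Delta(G)\le 4$, and every vertex of degree~$4$ a cut vertex) hold here, the last one vacuously, since $\Delta(G_\mathcal{Q})=3$. Combining the four verifications with the NP-hardness transfer yields the corollary; the only place where the structure of $G_\mathcal{Q}$ is really used beyond bipartiteness is this planarity check via the degree-$3$ bound, which is therefore the main point to be careful about.
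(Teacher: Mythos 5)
Your proposal is correct and takes essentially the same route as the paper: it transfers the NP-hardness of EDGE-IDCODE on the bipartite planar subcubic graphs $G_\mathcal{Q}$ of Theorem~\ref{thm:NPhard} through the line-graph operation, and establishes perfection via the characterization of~\cite{T77} (no odd cycles since $G_\mathcal{Q}$ is bipartite), clique and chromatic number~3, maximum degree~4, and planarity. The only difference is one of detail: the paper compresses all four verifications into ``one can check,'' whereas you spell them out, your local-rotation embedding (or, equivalently, Sedl\'a\v{c}ek's criterion) being a valid justification of the planarity claim, which is indeed the one point needing care.
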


Note that by Theorem~\ref{prop:low2} and
Corollary~\ref{Bound2|V(G)|-5}, we have $\tfrac{|V(G)|}{2}\leq
\EID(G)\leq |\mathcal{C}_E|\leq 2|V(G)|-3$ for any pendant-free graph
$G$ and any inclusionwise minimal edge-identifying code
$\mathcal{C}_E$ of $G$. Since one can construct such a code in
polynomial time, this gives a polynomial-time 4-approximation
algorithm for the optimization problem associated to EDGE-IDCODE:

\begin{theorem}
The optimization problems associated to EDGE-IDCODE in general graphs
and to IDCODE when restricted to line graphs are 4-approximable in
polynomial-time.
\end{theorem}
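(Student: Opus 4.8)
The plan is to give one polynomial-time algorithm that returns an inclusionwise minimal edge-identifying code and to show that any such code is automatically a $4$-approximation, and then to transfer the result to line graphs. First I would note that for a pendant-free (hence edge-identifiable) graph $G$ the full edge set $E(G)$ is an edge-identifying code: it edge-dominates trivially, and it edge-separates since $\mathcal{L}(G)$ is twin-free, so the sets $N[e]$ are pairwise distinct. Starting from $\mathcal{C}_E = E(G)$, the algorithm scans the edges once and deletes the current edge whenever the remaining set is still an edge-identifying code; this test is polynomial, as one only computes and compares the sets $N[e]\cap\mathcal{C}_E$.

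The key structural observation is that the edge-identifying codes of $G$ form an upward-closed family: adding edges can destroy neither domination nor separation, so every superset of a code is a code, and equivalently every subset of a non-code is a non-code. Consequently, whenever the scan declines to delete an edge $e$ because the current set minus $e$ fails to be a code, the same failure persists for every later (smaller) set minus $e$. Hence a single pass already produces, in polynomial time, a set $\mathcal{C}_E$ from which no single edge can be removed, that is, an inclusionwise minimal edge-identifying code.

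For the ratio I would invoke the two-sided estimate recorded just before the statement. If $G$ is connected, then Theorem~\ref{prop:low2} gives $\EID(G)\geq \tfrac{|V(G)|}{2}$, while Theorem~\ref{EIDare2Degenerated} shows that the subgraph induced by $\mathcal{C}_E$ is $2$-degenerated and therefore has at most $2|V(G)|-3$ edges; thus
$$\frac{|V(G)|}{2}\leq \EID(G)\leq |\mathcal{C}_E|\leq 2|V(G)|-3 < 4\cdot\frac{|V(G)|}{2}\leq 4\,\EID(G).$$
So $\mathcal{C}_E$ is a $4$-approximation. For a disconnected $G$ one argues component by component, both bounds being additive over the connected components. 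This establishes the claim for EDGE-IDCODE.

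Finally, for IDCODE restricted to line graphs I would reduce to the edge version on a root graph. Given a line graph $L$, Whitney's theorem provides an essentially unique graph $G$ with $\mathcal{L}(G)=L$, and such a $G$ can be recognized and reconstructed in polynomial (in fact linear) time by the algorithms of Roussopoulos or Lehot. As explained in the introduction, the (vertex-)identifying codes of $L=\mathcal{L}(G)$ correspond bijectively to the edge-identifying codes of $G$ with $\ID(L)=\EID(G)$, and $L$ is twin-free exactly when $G$ is pendant-free. Running the algorithm above on $G$ and mapping its output edges back to the corresponding vertices of $L$ therefore yields, in polynomial time, an identifying code of $L$ of size at most $4\,\ID(L)$. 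The routine parts are the monotonicity remark and the arithmetic; the only step needing genuine care is this last one, namely recovering a root graph from $L$ in polynomial time (the degenerate Whitney ambiguity $\mathcal{L}(K_3)=\mathcal{L}(K_{1,3})$ causing no trouble, since $K_3$ is not even identifiable), so that the approximation transfers without loss.
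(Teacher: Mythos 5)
Your proof is correct and follows essentially the same route as the paper, which derives the ratio from the sandwich $\tfrac{|V(G)|}{2}\leq \EID(G)\leq |\mathcal{C}_E|\leq 2|V(G)|-3$ for any inclusionwise minimal code $\mathcal{C}_E$ (via Theorem~\ref{prop:low2} and the 2-degeneracy of Theorem~\ref{EIDare2Degenerated}) together with the observation that such a code is constructible in polynomial time. You merely make explicit several steps the paper leaves implicit --- the upward-closedness justifying the greedy deletion pass, the component-wise treatment of disconnected graphs, and the polynomial-time reconstruction of the root graph via Whitney/Lehot/Roussopoulos --- all of which are handled correctly.
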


We remark that it is NP-hard to approximate the optimization version
of IDCODE within a factor of $o(\log(n))$ in general graphs on $n$
vertices (see~\cite{BLT06,S07}).



In the following, by slightly restricting the class of graphs
considered in Theorem~\ref{thm:NPhard}, we show that EDGE-IDCODE
becomes linear-time solvable in this restricted class. 

Let us first introduce some necessary concepts.

A graph property $\mathcal{P}$ is expressable in \emph{counting
  monadic second-order logic}, CMSOL for short (see~\cite{C90} for
further reference), if $\mathcal{P}$ can be defined using:
\begin{itemize}
\item vertices, edges, sets of vertices and sets of edges of a graph
\item the binary adjacency relation $adj$ where $adj(u,v)$ holds if and only if $u,v$ are two adjacent vertices
\item the binary incidence relation $inc$, where $inc(v,e)$ holds if and only if edge $e$ is incident to vertex $v$
\item the equality operator $=$ for vertices and edges
\item the membership relation $\in$, to check whether an element belongs to a set
\item the unary cardinality operator $card$ for sets of vertices
\item the logical operators OR, AND, NOT (denoted by $\vee$, $\wedge$, $\neg$)
\item the logical quantifiers $\exists$ and $\forall$ over vertices, edges, sets of vertices or sets of edges
\end{itemize}


It has been shown that CMSOL is particularly useful when combined with
the concept of the graph parameter \emph{tree-width} (we refer the
reader to~\cite{C90} for a definition). Some important classes of
graphs have bounded tree-width. For example, trees have tree-width at
most~1, series-parallel graphs have tree-width at most~2 and
outerplanar graphs have tree-width at most~3.

The following result shows that many graph properties can be checked in linear time for graphs of bounded tree-width.

\begin{theorem}[\cite{C90}]\label{thm:tw-msol}
Let $\mathcal{P}$ be a graph property expressable in
CMSOL and let $c$ be a constant. Then, for any graph $G$ of tree-width
at most $c$, it can be checked in linear time whether $G$ has property
$\mathcal{P}$.
\end{theorem}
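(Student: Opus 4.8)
The plan is to decide $\mathcal{P}$ on $G$ by reducing it to the evaluation of a single \emph{fixed} finite tree automaton on a labelled tree extracted from a bounded-width tree decomposition of $G$; this is the standard route to Courcelle's theorem. First I would compute, in time linear in $|V(G)|+|E(G)|$, a tree decomposition of $G$ of width at most $c$ --- possible by Bodlaender's algorithm since $c$ is a fixed constant --- and then normalise it into a \emph{nice} tree decomposition with $O(|V(G)|)$ nodes of the four usual types (leaf, introduce, forget, join). Reading this decomposition bottom-up, I would encode $G$ together with its decomposition as a term $t_G$ over a finite signature $F_c$ whose operators act on $c$-\emph{boundaried graphs}, i.e.\ graphs carrying at most $c+1$ distinguished boundary vertices with pairwise distinct labels: constants for single labelled vertices, a unary operator for each way of inserting an edge between two boundary labels, unary operators for relabelling or forgetting a boundary label, and a binary gluing operator that fuses two boundaried graphs along equally labelled boundary vertices. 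Since CMSOL here includes quantification over \emph{edge} sets (the incidence relation $inc$ is available), I would use the incidence (HR) encoding in which edges are themselves objects introduced by the edge operators; this is exactly why the logic applies on bounded tree-width. As the boundary has size at most $c+1$, the signature $F_c$ is finite and $t_G$ has size $O(|V(G)|)$.

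The core step is the B\"uchi--Elgot--Trakhtenbrot correspondence lifted from words to terms: for the fixed signature $F_c$ and the fixed property $\mathcal{P}=\varphi$, the set of terms over $F_c$ whose associated graph satisfies $\varphi$ is a \emph{recognizable} tree language and is therefore accepted by some finite deterministic bottom-up tree automaton $\mathcal{A}_{\varphi,c}$. I would establish recognizability by structural induction on $\varphi$. The atomic formulas ($=$, $adj$, $inc$, $\in$) unfold into local conditions traced through the term --- for instance $adj(u,v)$ holds precisely when an edge operator was applied to the labels of $u$ and $v$ while both still lay on a common boundary. The Boolean connectives are handled by closure of recognizable tree languages under union, intersection and complement; each existential quantifier $\exists U$ is handled by projection, guessing at every position whether the element introduced there belongs to $U$ and using closure under projection. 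The modular counting predicates of CMSOL are absorbed by enlarging the state space to track, for the relevant sets, their cardinalities modulo the finitely many fixed moduli; each such residue lives in a finite cyclic group, so the number of states stays finite. Because $\varphi$ and $c$ are constants, $\mathcal{A}_{\varphi,c}$ has a constant number of states.

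The algorithm then runs $\mathcal{A}_{\varphi,c}$ on $t_G$ in one bottom-up pass, processing each term node in constant time by a table look-up indexed by the operator and the (finitely many) states of its children; the total work, and hence the decision of whether $G$ satisfies $\mathcal{P}$, is therefore linear in $|t_G|=O(|V(G)|)$. I expect the genuine obstacle to be the translation rather than the evaluation, in two respects. The conceptual difficulty is the faithful \emph{back-translation}: one must verify that the meaning of $\varphi$ read on $G$ coincides with a term-formula read on $t_G$, i.e.\ that quantification over vertices, edges and their sets in $G$ matches quantification over (sets of) positions of $t_G$, and that the $adj$ and $inc$ relations are reproduced exactly by the construction of $t_G$. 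The quantitative subtlety is that complementation forces a determinization at each quantifier alternation, so the size of $\mathcal{A}_{\varphi,c}$ grows as a tower of exponentials in $|\varphi|$; this is harmless here, since $\varphi$ and $c$ are fixed, but it is precisely why the constant hidden in the linear running time is astronomically large.
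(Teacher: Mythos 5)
Your sketch is correct, but note that the paper offers no proof of this theorem at all---it is quoted as a black box from Courcelle~\cite{C90}---and what you have written is precisely the standard proof of that cited result: encode a width-$c$ (nice) tree decomposition as a term over a finite signature of boundaried-graph operations, compile the fixed CMSOL formula into a finite deterministic bottom-up tree automaton by structural induction (Boolean closure, projection for set quantifiers, residue-tracking states for the modular counting predicates), and evaluate it in one linear bottom-up pass, with the tower-of-exponentials state blowup correctly identified as the hidden constant. One historical caveat worth making explicit: Courcelle's original 1990 result assumes the tree decomposition is supplied with the input, and the linear-time claim as stated here additionally requires Bodlaender's linear-time decomposition algorithm (which postdates~\cite{C90}), so your invocation of it at the start is not optional but is exactly the ingredient that makes the theorem, as phrased in this paper, go through.
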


We now show that CMSOL can be used in the context of edge-identifying codes:

\begin{proposition}\label{prop:msol}
Given a graph $G$ and an integer $k$, let $\mathcal{EID}(G,k)$ be the property
that $\EID(G)\leq k$. Property $\mathcal{EID}(G,k)$ can be expressed in CMSOL.
\end{proposition}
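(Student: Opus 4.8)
The plan is to exhibit, for each fixed integer $k$, a single CMSOL sentence $\phi_k$ (with one existentially quantified variable ranging over sets of edges) whose satisfaction by $G$ is equivalent to $\EID(G)\leq k$. Since an edge-identifying code is by definition a set of edges meeting two combinatorial conditions together with a size bound, the sentence will have the shape $\exists S\,\big[\,\mathrm{Size}(S)\wedge \mathrm{Dom}(S)\wedge \mathrm{Sep}(S)\,\big]$, where $S$ is a set-of-edges variable and the three conjuncts encode the size bound, edge-domination and edge-separation, respectively.

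First I would build the auxiliary predicate expressing membership in a closed edge-neighbourhood, using only the incidence relation: set $N(e,f)\equiv (e=f)\vee \exists v\,(inc(v,e)\wedge inc(v,f))$, which holds exactly when $f\in N[e]$ and is symmetric in $e$ and $f$. With this in hand, edge-domination is $\mathrm{Dom}(S)\equiv \forall e\,\exists f\,(f\in S\wedge N(e,f))$, asserting $N[e]\cap S\neq\emptyset$ for every edge $e$. Edge-separation is
\[
\mathrm{Sep}(S)\equiv \forall e\,\forall f\,\big[\,e\neq f\rightarrow \exists g\,\big(g\in S\wedge \big((N(e,g)\wedge \neg N(f,g))\vee(\neg N(e,g)\wedge N(f,g))\big)\big)\,\big],
\]
which says that for distinct edges $e,f$ some code edge $g$ lies in exactly one of $N[e],N[f]$, equivalently $N[e]\cap S\neq N[f]\cap S$. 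Every quantifier here ranges over single vertices and edges, and the only primitives used are $inc$, $=$ and $\in$, so each conjunct is a legitimate CMSOL formula.

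The one step needing care is the cardinality constraint, because the cardinality operator listed for CMSOL applies only to \emph{sets of vertices}, whereas $S$ is a set of edges. Here I would exploit that $k$ is a fixed constant: the bound $|S|\leq k$ is equivalent to the nonexistence of $k+1$ pairwise-distinct edges all lying in $S$, i.e. $\mathrm{Size}(S)\equiv \neg\,\exists e_1\cdots\exists e_{k+1}\,\big(\bigwedge_{i}e_i\in S\wedge \bigwedge_{i<j}e_i\neq e_j\big)$. For each fixed $k$ this is a finite first-order formula, so $\phi_k$ is a genuine CMSOL sentence, and conjoining the three parts under the single quantifier $\exists S$ yields a sentence satisfied precisely when $G$ admits an edge-identifying code of size at most $k$, which is the property $\mathcal{EID}(G,k)$.

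I do not anticipate a serious obstacle: the argument is essentially a faithful translation of the two defining conditions of an edge-identifying code into logical form, and the only delicate point is avoiding the unavailable edge-set cardinality operator, which the constant-$k$ encoding handles. The place demanding slight attention is the verification that $\mathrm{Sep}$ genuinely captures $N[e]\cap S\neq N[f]\cap S$ — that ``$g$ belongs to exactly one neighbourhood'' encodes inequality of the two traces — but this is immediate from elementary set theory.
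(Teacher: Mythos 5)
Your proposal is correct and follows essentially the same route as the paper: both existentially quantify a set of edges and express edge-domination and edge-separation with the same first-order predicates built from $inc$ (your $N(e,f)$ is exactly the paper's $dom(e,f)$, where your extra clause $e=f$ is in fact redundant, since an edge shares a vertex with itself). The only divergence is the size bound: the paper simply writes $card(C)\leq k$, applying the cardinality operator to a set of \emph{edges} even though its own list of CMSOL primitives restricts $card$ to sets of vertices, whereas your first-order encoding of $|S|\leq k$ via the nonexistence of $k+1$ pairwise-distinct edges in $S$ sidesteps this and is arguably more scrupulous --- at the cost of a formula whose length grows with $k$, which affects only the uniformity in $k$ of the hidden constant when the proposition is later combined with Courcelle's theorem.
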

\begin{proof}
Let $V=V(G)$ and $E=E(G)$. We define the CMSOL relation
$dom(e,f)$ which holds if and only if $e,f$ are edges of $E$ and $e,f$
dominate each other, i.e. $e$ and $f$ are incident to the same
vertex. We have $dom(e,f):=\exists x\in V,(inc(x,e)\wedge inc(x,f))$.

Now we define $\mathcal{EID}(G,k)$ as follows:
\begin{eqnarray*}
\mathcal{EID}(G,k)&:=&\exists C, C\subseteq E, card(C)\leq k, \big(\forall e\in E, \exists f\in C, dom(e,f)\big) \wedge\\
&&\Big(\forall e\in E, \forall f\in E, e\neq f, \exists g\in C, \big((dom(e,g)
\wedge \neg dom(f,g)) \vee (dom(f,g) \wedge \neg dom(e,g))\big) \Big).
\end{eqnarray*}

\end{proof}

This together with Theorem~\ref{thm:tw-msol} implies the following corollary.

\begin{corollary}\label{cor:bTW}
EDGE-IDCODE can be solved in linear time for all classes of graphs
having their tree-width bounded by a constant.
\end{corollary}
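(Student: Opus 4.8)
The plan is to obtain the corollary as an immediate consequence of the two results just established: Proposition~\ref{prop:msol}, which expresses the edge-identifying code property in CMSOL, and Theorem~\ref{thm:tw-msol}, Courcelle's meta-theorem for graphs of bounded tree-width. First I would observe that deciding EDGE-IDCODE on an instance $(G,k)$ is, by definition, exactly the problem of testing whether the property $\mathcal{EID}(G,k)$ (that $\EID(G)\leq k$) holds for $G$. Proposition~\ref{prop:msol} supplies a CMSOL sentence defining this property, built using only set quantification over edges, the incidence relation $inc$, and the cardinality operator $card$. Fixing the tree-width bound to be a constant $c$, Theorem~\ref{thm:tw-msol} then applies and certifies that this property can be tested in time linear in the size of $G$, which is precisely the assertion of the corollary.

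The only delicate point is that $k$ is part of the input rather than a fixed constant, whereas Theorem~\ref{thm:tw-msol} speaks of a single fixed property $\mathcal{P}$. I would address this by noting that $k$ enters the CMSOL sentence solely through the subformula $card(C)\leq k$, and that $card$ is a native primitive of CMSOL; hence for each value of $k$ the sentence is a bona fide CMSOL formula to which the meta-theorem applies directly. To make the running time uniform in $k$ rather than merely linear for each fixed $k$, I would invoke the standard optimization strengthening of Courcelle's theorem, which computes the minimum cardinality of an edge set $C$ satisfying a CMSOL constraint (here, the domination-and-separation part of the sentence) in linear time; this yields the exact value $\EID(G)$, after which a single comparison with $k$ decides the instance. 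The trivial bound $\EID(G)\leq |E(G)|=O(|V(G)|)$ for graphs of bounded tree-width keeps all quantities linear.

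I expect this uniformity-in-$k$ issue to be essentially the whole substance of the argument; once it is dispatched, the corollary is a one-line invocation of the meta-theorem. Accordingly, the cleanest write-up is to take $\mathcal{EID}(G,k)$ as the property $\mathcal{P}$ of Theorem~\ref{thm:tw-msol}, cite Proposition~\ref{prop:msol} for its CMSOL expressibility, and conclude, appending the short remark on $card$ (and, if one wants full uniformity, on the optimization version) to justify treating $k$ as a parameter.
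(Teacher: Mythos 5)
Your proposal is correct and takes essentially the same route as the paper, which likewise derives the corollary as an immediate consequence of Proposition~\ref{prop:msol} together with Theorem~\ref{thm:tw-msol}, with no further argument given. Your additional care about uniformity in $k$ (noting that $card(C)\leq k$ makes the CMSOL sentence depend on the input $k$, and resolving this via the optimization strengthening of Courcelle's theorem) addresses a point the paper silently glosses over, but it is a refinement of the same one-line invocation rather than a different approach.
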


This result implies, in particular, that one can find the
edge-identifying code number of a tree in linear time. Note that a
similar approach has been used in~\cite{M05} to show that this holds
for IDCODE as well.  

The proof of Theorem~\ref{thm:tw-msol} is
constructive and gives a linear-time algorithm, but it is very
technical and hides a large constant depending on the size of the CMSOL 
expression. Therefore, it would be interesting to give a simpler and more practical
linear-time algorithm for EDGE-IDCODE in trees. Observe that this has
been done in~\cite{A10} for the case of vertex-identifying codes.

\end{document}